\newtheorem{neu}{}[section]
\newtheorem{Cor}[neu]{Corollary}
\newtheorem*{Cor*}{Corollary}
\newtheorem{Thm}[neu]{Theorem}
\newtheorem*{Thm*}{Theorem}
\newtheorem{Theorem}{Theorem}
\newtheorem{Prop}[neu]{Proposition}
\newtheorem*{Prop*}{Proposition}
\theoremstyle{definition}
\newtheorem{Lemma}[neu]{Lemma}
\newtheorem*{Rmk*}{Remark}
\newtheorem{Rmk}[neu]{Remark}
\newtheorem*{Ex*}{Example}
\newtheorem*{Qu*}{Question}
\newtheorem{Def}[neu]{Definition}
\newtheorem{Conv}[neu]{Convention}
\newcommand{\N}{\mathbb{N}}
\newcommand{\Z}{\mathbb{Z}}
\newcommand{\R}{\mathbb{R}}
\newcommand{\pf}{\longrightarrow}
\newcommand{\CZ}{\mu_{\mathrm{CZ}}}
\newcommand{\Morse}{\mu_{\mathrm{Morse}}}
\newcommand{\id}{\mathrm{id}}
\newcommand{\om}{\omega}
\newcommand{\A}{\mathcal{A}}
\renewcommand{\S}{\mathfrak{S}}
\newcommand{\D}{\mathbb{D}}
\newcommand{\M}{\mathcal{M}}
\newcommand{\Mh}{\widehat{\mathcal{M}}}
\newcommand{\Nm}{\mathcal{N}}
\newcommand{\B}{\mathcal{B}}
\renewcommand{\L}{\mathscr{L}}
\renewcommand{\H}{\mathrm{H}}
\newcommand{\Ham}{\mathrm{Ham}}
\newcommand{\RFH}{\mathrm{RFH}}
\newcommand{\RFC}{\mathrm{RFC}}
\newcommand{\CM}{\mathrm{CM}}
\newcommand{\HM}{\mathrm{HM}}
\newcommand{\Crit}{\mathrm{Crit}}
\newcommand{\beq}{\begin{equation}}
\newcommand{\beqn}{\begin{equation}\nonumber}
\newcommand{\eeq}{\end{equation}}
\newcommand{\bea}{\begin{equation}\begin{aligned}}
\newcommand{\bean}{\begin{equation}\begin{aligned}\nonumber}
\newcommand{\eea}{\end{aligned}\end{equation}}
\numberwithin{equation}{section}
\definecolor{Urs}{rgb}{0,.7,0}
\definecolor{Peter}{rgb}{0,0,1}
\definecolor{red}{rgb}{1,0,0}
\newcommand{\p}{\partial}
\newcommand{\Mp}{\mathfrak{M}}
\newcommand{\MP}{\mathcal{MP}}
\begin{document}
\title[Spectral Invariants in Rabinowitz Floer homology]{Spectral Invariants in Rabinowitz Floer homology and Global Hamiltonian perturbations}
\author{Peter Albers}
\author{Urs Frauenfelder}
\address{
    Peter Albers\\
    Department of Mathematics\\
    Purdue University}
\email{palbers@math.purdue.edu}
\address{
    Urs Frauenfelder\\
    Department of Mathematics and Research Institute of Mathematics\\
    Seoul National University}
\email{frauenf@snu.ac.kr}
\keywords{Leaf-wise Intersections, Rabinowitz Floer homology, Global Hamiltonian perturbations, Spectral invariants}
\subjclass[2000]{53D40, 37J10, 58J05}
\begin{abstract}
Spectral invariant were introduced in Hamiltonian Floer homology by Viterbo, Oh, and Schwarz. We extend this concept to Rabinowitz Floer homology. As an application we derive new quantitative existence results for leaf-wise intersections. The importance of spectral invariants for the presented application is that spectral invariants allow us to derive existence of critical points of the Rabinowitz action functional even in degenerate situations where the functional is not Morse.  
\end{abstract}
\maketitle

\section{Introduction}

We consider an autonomous Hamiltonian system $(M,\om,F)$ where $(M,\om)$ is a symplectic manifold and $F:M\pf\R$ is a smooth time-independent function. The dynamics is given by the flow $\phi_F^t$ of the Hamiltonian vector field $X_F$ which is defined implicitly by $\om(X_F,\cdot)=dF(\cdot)$. Since $F$ is autonomous the energy hypersurface $S=F^{-1}(0)$ is preserved under $\phi_F^t$. Therefore, $S$ is foliated by leaves $L_x:=\{\phi_F^t(x)\mid t\in\R\}$, $x\in S$.

It is a challenging problem to compare the system $F$ before and after a global perturbation occurring in the time interval $[0,1]$. Such a perturbation is described by a function $H:M\times[0,1]\pf\R$. J.~Moser observed in \cite{Moser_A_fixed_point_theorem_in_symplectic_geometry} that it is not possible to destroy all trajectories of the unperturbed system if the perturbation is sufficiently small, that is, there exists $x\in S$ 
\beq
\phi_H^1(x)\in L_x\;.
\eeq
Such a point $x$ is referred to as a leaf-wise intersection. Equivalently, there exists $(x,\eta)\in S\times\R$ such that
\beq
\phi_F^{\eta}(x)=\phi_{H}^1(x)\;.
\eeq
We point out that the time shift $\eta$ is uniquely defined by the above equation unless the leaf $L_x$ is closed. If the time shift is negative then the perturbation moves the system back into its own past. Likewise, if the time shift is positive the perturbation moves the system forward into its own future.

Already the existence problem for leaf-wise intersections is highly non-trivial. The search for leaf-wise intersections was initiated by Moser in \cite{Moser_A_fixed_point_theorem_in_symplectic_geometry} and pursued further in 
\cite{Banyaga_On_fixed_points_of_symplectic_maps,Hofer_On_the_topological_properties_of_symplectic_maps,
Ekeland_Hofer_Two_symplectic_fixed_point_theorems_with_applications_to_Hamiltonian_dynamics,Ginzburg_Coisotropic_intersections,Dragnev_Symplectic_rigidity_symplectic_fixed_points_and_global_perturbations_of_Hamiltonian_systems,Albers_Frauenfelder_Leafwise_intersections_and_RFH,Ziltener_coisotropic, Albers_Frauenfelder_Leafwise_Intersections_Are_Generically_Morse,Gurel_leafwise_coisotropic_intersection,Kang_Existence_of_leafwise_intersection_points_in_the_unrestricted_case,Merry_On_the_RFH_of_twisted_cotangent_bundles}. We refer to \cite{Albers_Frauenfelder_Leafwise_Intersections_Are_Generically_Morse} for a brief history.

To our knowledge the size of possible time shifts $\eta$ has not been studied so far.

\begin{Theorem}\label{thm:main_intro}
Let $B$ be a closed manifold with $\dim\H_*(\L_B)=\infty$ where $\L_B=C^{\infty}(S^1,B)$. Let  $(M:=T^*B,\om)$ be its cotangent bundle and $F:M\pf\R$ be a smooth function such that $S:=F^{-1}(0)$ is a regular level set which is  fiber-wise star-shaped. We assume that $H:M\times[0,1]\pf\R$ has compact support. Then there exist $(x,\eta)\in S\times\R$ such that
\beq
\phi_F^{\eta}(x)=\phi_{H}^1(x)
\eeq
with arbitrarily large positive and negative time shifts $\eta$.
\end{Theorem}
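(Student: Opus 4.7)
The strategy rests on the observation that critical points of the Rabinowitz action functional $\A^H$ on $\L M\times\R$ are precisely the pairs $(x,\eta)$ corresponding to leaf-wise intersections, with the Lagrange multiplier $\eta$ playing the role of the time-shift. Consequently, to produce leaf-wise intersections with arbitrarily large $|\eta|$ in both signs, it suffices to exhibit a sequence of critical values of $\A^H$ that diverges to $+\infty$ and another that diverges to $-\infty$, together with a quantitative estimate that turns the size of a critical value into the size of the corresponding $\eta$.

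For the first ingredient I would invoke the spectral invariants $c(\alpha,H)$ constructed in the body of the paper, applied to a well-chosen sequence of classes $\alpha_k^\pm\in\RFH_*(S,T^*B)$. The assumption $\dim\H_*(\L_B)=\infty$ places us in the regime where Rabinowitz Floer homology is infinite-dimensional (via the Cieliebak--Frauenfelder--Oancea type isomorphism between $\RFH_*(S,T^*B)$ and loop-space homology, used in Merry's cotangent-bundle computation), with classes detecting closed characteristics on $S$ of arbitrarily long period. For the unperturbed functional this yields sequences $c(\alpha_k^\pm,0)\to\pm\infty$, corresponding to the two orientations of long Reeb orbits. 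The Lipschitz bound
\[
\bigl|c(\alpha,H)-c(\alpha,0)\bigr|\le \|H\|
\]
(a standard consequence of the variational definition applied to a linear interpolating homotopy, exactly as in Oh--Schwarz) then propagates this unboundedness to $c(\alpha_k^\pm,H)$.

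The second ingredient is an a priori estimate of the shape
\[
\bigl|\A^H(x,\eta)\bigr|\le C_1|\eta|+C_2
\]
valid for every critical point $(x,\eta)$ of $\A^H$, with $C_1,C_2$ depending only on $S$, $F$ and $H$. At a critical point the constraint term $\eta\int_0^1 F(x(t))\,dt$ vanishes, the contribution $\int_0^1 H(t,x(t))\,dt$ is controlled by $\|H\|_{\infty}$, and because $S$ is fiber-wise star-shaped and $H$ has compact support, $\int x^*\lambda$ can be bounded linearly in $|\eta|$ by integrating the flow equation against the Liouville form. Applying this estimate to the critical points $(x_k^\pm,\eta_k^\pm)$ that realize the unbounded spectral values $c(\alpha_k^\pm,H)$ forces $\eta_k^\pm\to\pm\infty$, which is exactly the desired conclusion.

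The main obstacle is the quantitative action/time-shift estimate in the last step: one must prevent critical loops from escaping into the cotangent directions while producing genuinely linear (not just polynomial) control of $\int x^*\lambda$ by $|\eta|$. Fiber-wise star-shapedness of $S$ and the compact support of $H$ are indispensable here, and the argument parallels the $L^\infty$-bounds developed in Merry's work on twisted cotangent bundles and in the authors' earlier papers on leaf-wise intersections. A secondary technical point is to guarantee that the two sequences $\alpha_k^\pm$ of classes with $c(\alpha_k^\pm,0)\to\pm\infty$ both exist; this follows from the infinite-dimensionality of loop-space homology on the positive side, and on the negative side from the symmetry of the Rabinowitz functional under orientation reversal of the underlying loops.
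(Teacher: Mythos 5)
Your overall strategy coincides with the paper's: interpret leaf-wise intersections as critical points of the perturbed Rabinowitz functional, use $\dim\H_*(\L_B)=\infty$ to produce unboundedly many Rabinowitz Floer classes with divergent spectral values, and read off the time shift $\eta$ from the action of the selected critical points (for an adapted Moser pair the identity $\A^{\Mp_S}(v,\eta)=-\eta-\int[\lambda(X_{\widehat H})+\widehat H]\,dt$ makes your ``a priori estimate'' an exact one-line computation, so that part is fine; the paper then gets the opposite sign of $\eta$ by passing to Rabinowitz Floer cohomology, much as you propose via orientation reversal). However, there is a genuine gap at the step you dismiss as ``a standard consequence \ldots exactly as in Oh--Schwarz.'' In Hamiltonian Floer theory the Lipschitz bound $|c(\alpha,H)-c(\alpha,0)|\le\|H\|$ follows from the elementary energy identity along a linear homotopy. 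In the Rabinowitz setting the analogous computation produces the term $\int\eta(s)\,\partial_sF_s(u)\,ds$ along the continuation trajectory, which cannot be bounded without an a priori $C^0$ bound on the Lagrange multiplier $\eta(s)$ along the \emph{whole} homotopy; that bound in turn requires the action estimate one is trying to prove. Breaking this circularity is the central technical content of the paper (Lemma \ref{lem:crucial_lemma_for_eta_bound} and the bootstrap in Proposition \ref{prop:main_action_estimate}), and the outcome is not an additive Lipschitz bound but a multiplicative estimate, exponential in $\|\Mp_+-\Mp_-\|$, valid only after subdividing the homotopy into small pieces and taking an adiabatic limit (Theorem \ref{thm:main_action_estimate}). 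Moreover, you must vary the hypersurface as well as $H$: the reference functional for which unboundedness of spectral values is known is built from a bumpy metric's unit cotangent bundle, while $S=\Sigma_{f_S}$ is a different star-shaped graph.

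The second, related gap is that for the actual pair $(S,H)$ in the theorem the perturbed functional $\A^{\Mp_S}$ need not be Morse, and the unperturbed functional on a general star-shaped $S$ need not even be Morse-Bott, so neither $\RFH_*(\Mp_S)$ nor the minimax spectral invariants $c(\alpha,H)$ that your argument invokes are defined for the given data. The entire point of the local Lipschitz property is to \emph{extend} $\rho_X$ continuously from the dense set of regular Moser pairs to the degenerate one (Corollary \ref{cor:existence_of_extension_of_rho}), and then to show by an Arzel\`a--Ascoli argument — again resting on Lemma \ref{lem:crucial_lemma_for_eta_bound} — that the extended value is still a critical value of $\A^{\Mp_S}$. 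Without this extension your selected classes $\alpha_k^\pm$ do not produce critical points of $\A^{\Mp_S}$ at all, so the final step of your plan does not go through as stated.
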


\begin{Rmk}
 Thus, in classical Hamiltonian dynamical systems perturbations can move the system arbitrarily far into the past and future.
\end{Rmk}

\begin{Rmk}
Theorem \ref{thm:main_intro} cannot be true for arbitrary energy surfaces $S$. Indeed if $S$ is Hamiltonianly displaceable there are no leaf-wise intersections at all for a displacing Hamiltonian $H$.
\end{Rmk}

\begin{Cor}
Under the assumptions of Theorem \ref{thm:main_intro} there exists infinitely many leaf-wise intersections or a leaf-wise intersection $x$ where $L_x$ is closed. The latter we refer to as periodic leaf-wise intersections.
\end{Cor}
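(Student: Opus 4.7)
The plan is to derive the Corollary directly from Theorem~\ref{thm:main_intro} by a dichotomy argument based on uniqueness of the time shift.

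First I would set up the contrapositive: assume that the set of leaf-wise intersections is finite and that no such intersection $x$ has closed leaf $L_x$. Call this finite set $\{x_1,\dots,x_N\}\subset S$. The introduction records that whenever $L_x$ is not closed, the time shift $\eta$ satisfying $\phi_F^\eta(x)=\phi_H^1(x)$ is uniquely determined by $x$ (otherwise two distinct shifts would force the orbit to be periodic, making $L_x$ closed). Under our standing assumption, each $x_i$ therefore gives rise to exactly one time shift $\eta_i\in\R$, and the full set of realizable pairs is the finite set $\{(x_1,\eta_1),\dots,(x_N,\eta_N)\}$.

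Next I would invoke Theorem~\ref{thm:main_intro}: its assumptions hold by hypothesis of the Corollary, so for every $T>0$ there exists $(x,\eta)\in S\times\R$ with $\phi_F^{\eta}(x)=\phi_H^1(x)$ and $\eta>T$ (and similarly with $\eta<-T$). In particular $\eta$ is unbounded, which contradicts the finiteness of $\{\eta_1,\dots,\eta_N\}$ established in the previous step. Hence either the set of leaf-wise intersections is infinite, or at least one leaf-wise intersection has closed leaf, i.e.\ is a periodic leaf-wise intersection.

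There is essentially no obstacle here beyond correctly quoting the uniqueness of $\eta$ for non-closed leaves; the entire strength of the statement is packaged into Theorem~\ref{thm:main_intro}. The only point that deserves a moment of care is that the pair $(x,\eta)$ supplied by the theorem has $x$ a leaf-wise intersection, which is immediate from the defining equation, so no further argument is needed to pass from the existence of large-shift pairs to the existence of many leaf-wise intersections.
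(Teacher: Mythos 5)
Your proof is correct and is exactly the argument the paper intends (the paper gives no explicit proof, but the remark in the introduction that $\eta$ is unique unless $L_x$ is closed is precisely the mechanism you use): finitely many non-periodic leaf-wise intersections would yield only finitely many realizable time shifts, contradicting the unboundedness of $\eta$ from Theorem~\ref{thm:main_intro}.
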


We recall that if $\dim B\geq2$ generically there are no periodic leaf-wise intersection, therefore, generically there exist infinitely many leaf-wise intersections, see \cite{Albers_Frauenfelder_Leafwise_Intersections_Are_Generically_Morse}.

We use our variational approach to leaf-wise intersections by interpreting them as critical points of a perturbed Rabinowitz action functional, see \cite{Albers_Frauenfelder_Leafwise_intersections_and_RFH}. Rabinowitz Floer homology for unit cotangent bundle can be expressed with help of the homology $\H_*(\L_B)$ of the free loop space $\L_B$ of $B$, see \cite{Cieliebak_Frauenfelder_Oancea_Rabinowitz_Floer_homology_and_symplectic_homology,Abbondandolo_Schwarz_Estimates_and_computations_in_Rabinowitz_Floer_homology}. Hence, if the perturbed Rabinowitz action functional is Morse it has to have infinitely many critical points. The main difficulty in proving Theorem \ref{thm:main_intro} is to extend this result to degenerate situations in which Rabinowitz Floer homology cannot be directly defined. To overcome this problem we define spectral invariants for Rabinowitz Floer homology. Spectral invariants were introduced by Viterbo \cite{Viterbo_Symplectic_topology_as_the_geometry_of_generating_functions}, Oh \cite{Oh_Symplectic_topology_as_the_geometry_of_action_functional_I,Oh_Symplectic_topology_as_the_geometry_of_action_functional_II}, and Schwarz \cite{Schwarz_Action_spectrum} in the context of Hamiltonian Floer homology. An interesting and useful feature in Hamiltonian Floer theory is the relation between spectral invariants and the pair-of-pants product. This direction is not needed  for the applications in the present article and therefore not pursued. It is an interesting problem for the future to study product structures in Rabinowitz Floer homology and their relations to spectral invariants.

If the Rabinowitz functional is Morse the spectral invariants are defined by a standard minimax procedure. In order to extend them to arbitrary Rabinowitz action functionals one has to proof a local Lipschitz property. This is the main technical issue and occupies most of this article. Spectral invariants are useful since even in the degenerate case they assign critical values to a Rabinowitz Floer homology class.

\subsubsection*{Acknowledgments}
This article was written during visits of the first author at the Seoul National University  and the Institute for Advanced Study, Princeton and visits of the second author at the ETH Z\"urich and the Institute for Advanced Study, Princeton. Both authors thank these institutions for their stimulating working atmospheres. 

This material is based upon work supported by the National Science Foundation under agreement No.~DMS-0635607 and DMS-0903856. Any opinions, findings and conclusions or recommendations expressed in this material are those of the authors and do not necessarily reflect the views of the National Science Foundation.

\section{A variational approach to leaf-wise intersections}

We recall from \cite{Albers_Frauenfelder_Leafwise_Intersections_Are_Generically_Morse} the notion of Moser pair.

\begin{Def}\label{def:Moser_pair}
A pair $\Mp=(F,H)$ of Hamiltonian functions $F,H:M\times S^1\pf R$ is called a Moser pair if it satisfies
\beq
F(\cdot,t)=0\quad\forall t\in[\tfrac12,1]\qquad\text{and}\qquad H(\cdot,t)=0\quad\forall t\in[0,\tfrac12]\;,
\eeq
and $F$ is of the form $F(x,t)=\rho(t)f(x)$ for some smooth map $\rho:S^1\to [0,1]$ with $\int_0^1\rho(t) dt=1$ and $f:M\pf\R$. 
We denote the set of Moser pairs by $\mathcal{MP}(M)$.
\end{Def}

For a Moser pair $\Mp=(F,H)$ the perturbed Rabinowitz action functional is defined by
\bea
\A^\Mp:\L_M\times\R&\pf\R\\
(v,\eta)&\mapsto-\int_0^1v^*\lambda-\int_0^1H(v,t)dt-\eta\int_0^1F(v,t)dt
\eea
where $\L_M:=C^\infty(S^1,M)$. A critical point $(v,\eta)$ of $\A^\Mp$ is a solution of 
\beq\label{eqn:critical_points_eqn}
\left. 
\begin{aligned}
\partial_tv=\eta X_{F}(v,t)+X_H(v,t)\\
\int_0^1F(v,t)dt=0
\end{aligned}\right\}
\eeq
In his pioneering work  \cite{Rabinowitz_Periodic_solutions_of_Hamiltonian_systems} Rabinowitz studied the case of the unperturbed functional, that is, the case $H=0$. In this situation critical points correspond to closed characteristics on the energy hypersurface $F^{-1}(0)$.

In \cite{Albers_Frauenfelder_Leafwise_intersections_and_RFH} we observed that critical points of the perturbed Rabinowitz action functional $\A^\Mp$ give rise to leaf-wise intersections. 

\begin{Prop}[\cite{Albers_Frauenfelder_Leafwise_intersections_and_RFH}]\label{prop:critical_points_give_LI}
Let $(v,\eta)$ be a critical point of $\A^\Mp$ then $x:=v(\tfrac12)\in F^{-1}(0)$ and
\beq
\phi_H^1(x)\in L_x
\eeq
thus, $x$ is a leaf-wise intersection.
\end{Prop}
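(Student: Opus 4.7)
The plan is to exploit the block structure of a Moser pair $\Mp=(F,H)$: because $H$ vanishes on $[0,\tfrac12]$ and $F$ vanishes on $[\tfrac12,1]$, the flow equation from \eqref{eqn:critical_points_eqn} decouples into two separate pieces, and the Lagrange multiplier constraint turns out to pin the value of $f$ at the midpoint. First, on the interval $[\tfrac12,1]$ the equation reduces to $\partial_t v=X_H(v,t)$, so that
\beq
v(1)=\phi_H^1\bigl(v(\tfrac12)\bigr)=\phi_H^1(x).
\eeq
On the interval $[0,\tfrac12]$ it reduces to $\partial_t v=\eta\,\rho(t)\,X_f(v)$, which is a time reparametrization of the flow of $X_f$. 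In particular $f\circ v$ is constant on $[0,\tfrac12]$, and if we set $\sigma(t):=\eta\int_0^t\rho(\tau)d\tau$ then $v(t)=\phi_f^{\sigma(t)}(v(0))$ for $t\in[0,\tfrac12]$.

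Next I would plug these two observations into the constraint $\int_0^1 F(v,t)\,dt=0$. Since $F(x,t)=\rho(t)f(x)$ and $\rho$ is supported in $[0,\tfrac12]$ (because $F$ vanishes on $[\tfrac12,1]$), and since $f\circ v$ is constant there with value $f(v(\tfrac12))=f(x)$, this gives
\beq
0=\int_0^1 F(v,t)\,dt=f(x)\int_0^{1/2}\rho(t)\,dt=f(x),
\eeq
using $\int_0^1\rho=1$ together with $\rho|_{[\tfrac12,1]}=0$. Hence $x=v(\tfrac12)\in f^{-1}(0)=F^{-1}(0)$, which is the first claim.

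Finally, to obtain the leaf-wise intersection property I would use that $v$ is a loop, i.e.\ $v(0)=v(1)$. Evaluating the reparametrized flow at $t=\tfrac12$ gives $x=v(\tfrac12)=\phi_f^{\sigma(1/2)}(v(0))=\phi_f^{\eta}(v(0))$, and combining with $v(1)=\phi_H^1(x)$ yields
\beq
x=\phi_f^{\eta}\bigl(\phi_H^1(x)\bigr),\qquad\text{equivalently}\qquad \phi_H^1(x)=\phi_f^{-\eta}(x)\in L_x,
\eeq
which is the desired conclusion. This also makes the sign convention transparent: the time shift in the statement of Theorem \ref{thm:main_intro} corresponds to $-\eta$.

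I do not anticipate a genuine obstacle here; the whole argument is bookkeeping built on the Moser pair structure in Definition \ref{def:Moser_pair}. The only subtle point is to remember that $\rho$ is supported in $[0,\tfrac12]$ (an immediate consequence of $F(\cdot,t)=0$ for $t\in[\tfrac12,1]$ combined with $F=\rho f$), because that is precisely what upgrades the integral constraint from a condition on the time average of $f\circ v$ to the pointwise vanishing $f(x)=0$, and what makes the total reparametrization time on $[0,\tfrac12]$ equal to $\eta$ rather than some fraction of it.
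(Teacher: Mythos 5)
Your argument is correct and is exactly the standard computation behind this proposition (the paper itself only cites \cite{Albers_Frauenfelder_Leafwise_intersections_and_RFH} for the proof): decouple the critical point equation using the supports of $F$ and $H$, use the constraint $\int_0^1 F(v,t)\,dt=f(x)=0$, and close up the loop to get $\phi_H^1(x)=\phi_f^{-\eta}(x)\in L_x$. Your observation that $\rho$ must be supported in $[0,\tfrac12]$, so that $\sigma(\tfrac12)=\eta$ and the constraint becomes pointwise, is precisely the point of the Moser pair structure.
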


\section{Rabinowitz Floer homology}

Rabinowitz Floer homology is the semi-infinite Morse homology associated to the Rabinowitz action functional. In the unperturbed case it has been constructed in \cite{Cieliebak_Frauenfelder_Restrictions_to_displaceable_exact_contact_embeddings} under the assumption that the energy hypersurface $F^{-1}(0)$ is a smooth restricted contact-type hypersurface. This construction in the unperturbed case has been extended to stable hypersurfaces in \cite{Cieliebak_Frauenfelder_Paternain_Symplectic_Topology}. In \cite{Albers_Frauenfelder_Leafwise_intersections_and_RFH}  we extended the construction in the case of restricted contact-type hypersurface to the perturbed Rabinowitz action functionals. In this article we continue our study of the perturbed Rabinowitz action functional for restricted contact-type hypersurfaces.

Let $(W,\om=d\lambda)$ be a compact, exact symplectic manifold with contact type boundary $\Sigma=\p W$, that is, the Liouville vector field $L$ defined by $i_L\om=\lambda$ points outward along $\Sigma$. In particular, $(\Sigma,\alpha:=\lambda|_\Sigma)$ is contact. We denote by $M$ the completion of $W$ obtained by attaching the positive half of the symplectization of $\Sigma$, that is, $(M=W\cup_{\Sigma}(\Sigma\times\R_+), \om=d\lambda)$ where $\lambda$ is extended by $e^r\alpha$, $r\in\R_+$, over $\Sigma\times\R_+ $. Since $W$ is compact and exact the negative half $\Sigma\times\R_-$ of the symplectization embeds into $W$. In the following we will identify $\Sigma\times\R$ with its embedding into $M$.

We choose a smooth function $\rho:S^1=\R/\Z\to [0,1]$ with $\int_0^1\rho(t) dt=1$ and $\rho(t)=0$ for $t\in[\frac12,1]$. We fix $0<\delta<1$ once and for all  and choose a smooth monotone function $\beta:\R\pf\R$ with 
\beq
\beta(r)=
\begin{cases}
r&\text{for }|r|\leq\delta/2\\
\delta&\text{for }r\geq\delta\\
-\delta&\text{for }r\leq-\delta
\end{cases}
\eeq
For later convenience we require in addition that
\beq\label{eqn:derivative_of_beta_leq_2}
0\leq\beta'(s)\leq2\;.
\eeq
For any smooth function $f:\Sigma\pf\R$ we define
\beq\label{eqn:def_of_F_f}
F_f(y,t):=
\begin{cases}
\beta(r-f(x))\,\rho(t)&\text{for }y=(x,r)\in\Sigma\times\R\\
-\delta\,\rho(t)&\text{for }y\in M\setminus\big(\Sigma\times\R\big)
\end{cases}
\eeq
We denote by $\Sigma_f:=\{(x,f(x))\mid x\in\Sigma\}\subset M$ the graph of $f$ over $\Sigma$ and abbreviate $F:=F_0$.

\begin{Lemma}\label{lem:basic_lemma_in_contact_geometry}
The 1-form $\alpha_f:=\lambda|_{\Sigma_f}=e^f\alpha$ is a contact form on $\Sigma_f$ with Reeb vector field $R_f$ given by $X_{G_f}|_{\Sigma_f}$ where $X_{G_f}$ is the Hamiltonian vector field of the function $G_f(x,r):=r-f(x):\Sigma\times\R\pf\R$. In particular,
\beq
\lambda(X_{G_f})=1\;.
\eeq
\end{Lemma}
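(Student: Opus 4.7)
The plan is to exploit that $\Sigma_f$ is a regular level set of the function $G_f$: this will make $X_{G_f}$ automatically tangent to $\Sigma_f$ and force the Reeb equation $i_{X_{G_f}|_{\Sigma_f}}d\alpha_f = 0$ for free, leaving only the normalization $\lambda(X_{G_f}) = 1$ to be checked by a direct local computation.

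First I would parametrize $\Sigma_f$ via the graph embedding $\iota_f\colon\Sigma\to\Sigma\times\R$, $x\mapsto (x,f(x))$, and pull back $\lambda = e^r\alpha$ to obtain $\alpha_f = \iota_f^*\lambda = e^f\alpha$. Since $e^f > 0$ and $\alpha$ is contact, the conformally rescaled form $\alpha_f$ is again contact: writing $\dim\Sigma = 2n-1$, each $df\wedge\alpha$ cross-term appearing in $\alpha_f\wedge(d\alpha_f)^{n-1}$ vanishes upon wedging with $\alpha_f = e^f\alpha$ because $\alpha\wedge\alpha = 0$, leaving $e^{nf}\alpha\wedge(d\alpha)^{n-1}$, which is a volume form on $\Sigma$.

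Next, $\Sigma_f = G_f^{-1}(0)$ with $dG_f = dr - df \neq 0$, so $\Sigma_f$ is a regular level set of $G_f$. The identity $dG_f(X_{G_f}) = \omega(X_{G_f}, X_{G_f}) = 0$ then forces $X_{G_f}$ to be tangent to $\Sigma_f$, so that $R_f := X_{G_f}|_{\Sigma_f}$ is a well-defined vector field on $\Sigma_f$. Using $d\alpha_f = d(\iota_f^*\lambda) = \iota_f^*\omega$, for any $V\in T\Sigma_f$ one computes $d\alpha_f(R_f,V) = \omega(X_{G_f},V) = dG_f(V) = 0$, since $T\Sigma_f = \ker dG_f|_{\Sigma_f}$; this is precisely the Reeb equation $i_{R_f}d\alpha_f = 0$.

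Finally, I would verify $\lambda(X_{G_f}) = 1$ on all of $\Sigma\times\R$, which is both the ``in particular'' assertion of the lemma and, upon restriction, the Reeb normalization $\alpha_f(R_f) = \lambda(X_{G_f})|_{\Sigma_f} = 1$. Writing $X_{G_f} = a\partial_r + Y$ with $Y$ tangent to the $\Sigma$-fibers and substituting $\omega = e^r\,dr\wedge\alpha + e^r\,d\alpha$ into $i_{X_{G_f}}\omega = dG_f = dr - df$, contraction with $\partial_r$ pins down $e^r\alpha(Y) = 1$ (with the paper's sign conventions), whence $\lambda(X_{G_f}) = e^r\alpha(X_{G_f}) = e^r\alpha(Y) = 1$. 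The only non-trivial aspect of the whole argument is careful sign book-keeping in this last computation; the conceptual content of the lemma reduces to the observation that $G_f$ cuts out $\Sigma_f$ transversally.
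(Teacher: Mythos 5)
Your proposal is correct and follows essentially the same route as the paper: tangency of $X_{G_f}$ from $\Sigma_f=G_f^{-1}(0)$, the Reeb equation $i_{R_f}d\alpha_f=0$ from restricting $i_{X_{G_f}}\omega=dG_f$ to $T\Sigma_f=\ker dG_f$, and the normalization $e^r\alpha(X_{G_f})=1$ by contracting the defining equation with $\partial_r$. The only difference is cosmetic: you spell out the contact condition $\alpha_f\wedge(d\alpha_f)^{n-1}=e^{nf}\alpha\wedge(d\alpha)^{n-1}$, which the paper dismisses as straightforward.
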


\begin{proof}
That $\alpha_f$ is a contact form is straight forward to check. In order to prove $R_f=X_{G_f}|_{\Sigma_f}$ we first note that $\Sigma_f=G_f^{-1}(0)$ and thus $X_{G_f}|_{\Sigma_f}$ is indeed tangent to $\Sigma_f$. It remains to check the following two equations on $\Sigma_f$
\begin{align}
\label{eqn:contact1}i_{X_{G_f}}d\alpha_f&=0,\\
\label{eqn:contact2}\alpha_f(X_{G_f})&=1\;.
\end{align}
The defining equation of $X_{G_f}$ is
\beq
i_{X_{G_f}}\Big(e^r(dr\wedge\alpha+d\alpha)\Big)=dG_f\;.
\eeq
On $\Sigma_f=\{r=f(x)\}$ this reads
\beq
i_{X_{G_f}}\Big(\underbrace{e^f(df\wedge\alpha+d\alpha)}_{=d\alpha_f}\Big)=dG_f|_{\Sigma_f}=0\;.
\eeq
This proves the equation \eqref{eqn:contact1}. To prove \eqref{eqn:contact2} we observe
\beq
1=dG_f \Big(\frac{\p}{\p r} \Big)=i_{\frac{\p}{\p r}}i_{X_{G_f}}\Big(e^r(dr\wedge\alpha+d\alpha)\Big)=e^rdr\Big(\frac{\p}{\p r}\Big)\alpha(X_{G_f})=e^r\alpha(X_{G_f})\;.
\eeq
On $\Sigma_f=\{r=f(x)\}$ this becomes
\beq
1=e^f \alpha(X_{G_f})=\alpha_f(X_{G_f})\;.
\eeq
\end{proof}

\begin{Def}\label{def:set_of_half_constant_Hamiltonians}
We set
\beq
\mathcal{H}:=\{H\in C^\infty(M\times S^1)\mid H\text{ has compact support and } H(t,\cdot)=0\quad\forall t\in[0,\tfrac12]\}
\eeq
\end{Def}

\begin{Rmk}
It's easy to see that the $\Ham(M,\om)\equiv\{\phi_H^1\mid H\in\mathcal{H}\}$, e.g.~\cite{Albers_Frauenfelder_Leafwise_intersections_and_RFH}, where $\phi_H^1$ is the time-1-map of the Hamiltonian flow of $H$.
\end{Rmk}

\begin{Def}\label{def:Moser_pair_Sigma}
We define the subset $\mathcal{MP}(\Sigma) $ of Moser pairs
\beq
\MP(\Sigma):=\{\Mp=(F_f,H)\mid f\in C^\infty(\Sigma),\;H\in\mathcal{H}\}
\eeq
where $F_f$ is defined in equation \eqref{eqn:def_of_F_f}. We call $\Mp\in\MP(\Sigma)$ a Moser pair adapted to $\Sigma$.
\end{Def}

Proposition \ref{prop:critical_points_give_LI} implies that for $\Mp\in\MP(\Sigma)$ critical points of the Rabinowitz action functional $\A^\Mp$ are leaf-wise intersections on $\Sigma_f$. We choose a compatible almost complex structure $\widetilde{J}$ on $M$ such that on a $\delta$-neighborhood of $\Sigma_f$ the almost complex structure is SFT-like with respect to the contact form $\alpha_f$, see \cite{BEHWZ_SFT_compactness}. That is, $\widetilde{J}$ interchanges the Reeb vector field $R_f$ and Liouville vector field $L$, preserves the contact distribution, and is translationally invariant. Here $\delta$ is the universally chosen constant, for instance as in the definition of $F_f$, see \eqref{eqn:def_of_F_f}. Now we change $\widetilde{J}$ to $J$ by requiring
\beq
JR_f=e^{r-f(x)}L,\quad JL=e^{-r+f(x)}R_f
\eeq
and that $J=\widetilde{J}$ on the contact distribution. Then $J$ still is a compatible almost complex structure. Such a $J$ is called twisted SFT-like.

\begin{Rmk}\label{rmk:norm_of_ReeB_Liouville_equal_1}
Since $J$ is twisted SFT-like we have on a $\delta$-neighborhood
\beq
||X_{G_f}||=||L||=1
\eeq
and since $\lambda(X_{G_f})=1$ 
\beq
||\lambda||=1\;.
\eeq
\end{Rmk}

Let $\Mp\in\MP(\Sigma)$ be an adapted Moser pair. The norm of the gradient of $\A^\Mp$ equals
\beq\label{eqn:norm_of_gradient}
||\nabla\A_H^F(u,\eta)||^2=||\partial_tu-X_{H}(u,t)-\eta X_F(u,t)||_{L^2}^2+\Big|\int_0^1F(u(t),t)dt\Big|^2
\eeq
where the $L^2$-norm is taken with respect to the metric $g(\cdot,\cdot):=\om(\cdot,J\cdot)$. We denote by $\L$ the component of the contractible loops in $M$.

\begin{Def}\label{def:gradient flow line}
A gradient flow line of $\A^\Mp$ is (formally) a map $w=(u,\eta)\in C^{\infty}(\R,\L\times\R)$
solving the ODE 
\beq\label{eqn:gradient flow line}
\partial_s w(s)+ \nabla\A^\Mp(w(s))=0\;,
\eeq
where the gradient is taken with respect to metric $\mathfrak{m}$ defined as follows. Let $(\hat{u}_1,\hat{\eta}_1)$ and $(\hat{u}_2,\hat{\eta}_2)$ be two tangent vectors in $T_{(u,\eta)}(\L\times\R)$. We set
\beq
\mathfrak{m}\big((\hat{u}_1,\hat{\eta}_1),\,(\hat{u}_2,\hat{\eta}_2)\big):=\int_0^1g\big(\hat{u}_1,\hat{u}_2\big)dt+\hat{\eta}_1\hat{\eta}_2\;.
\eeq
According to Floer's interpretation, \cite{Floer_The_unregularized_gradient_flow_of_the_symplectic_action}, this means that $u$ and $\eta$ are smooth maps $u:\R\times S^1\pf M$ and $\eta:\R\pf\R$ solving
\beq\label{eqn:gradient flow equation}\left.
\begin{aligned}
&\partial_su+J(u)\big(\partial_tu-X_{H}(u,t)-\eta X_F(u,t)\big)=0\\[1ex]
&\partial_s\eta-\int_0^1F_f(u,t)dt=0.
\end{aligned}
\;\;\right\}
\eeq
\end{Def}

\begin{Def}
A Moser pair $\Mp$ is called regular if $\A^\Mp$ is Morse.
\end{Def}

We recall the following 
\begin{Prop}[\cite{Albers_Frauenfelder_Leafwise_intersections_and_RFH}]\label{prop:generic_is_regular_general} 
A generic Moser pair is regular. 
\end{Prop}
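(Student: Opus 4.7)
The plan is to run the standard Sard--Smale transversality argument adapted to the Rabinowitz setting. Concretely, I would enlarge the space of admissible Hamiltonians $H\in\mathcal{H}$ (and functions $f\in C^\infty(\Sigma)$) to a separable Banach manifold $\mathcal{H}^\varepsilon$ of Floer $C^\varepsilon$-class perturbations, which is a dense subspace of $\mathcal{H}$ with respect to which the Sard--Smale theorem applies. Let $\mathcal{MP}^\varepsilon(\Sigma)$ be the corresponding space of Moser pairs, and consider the universal zero set
\beq
\mathcal{Z}:=\bigl\{(v,\eta,\mathfrak{M})\in \L\times\R\times\mathcal{MP}^\varepsilon(\Sigma)\;\big|\;d\mathcal{A}^\mathfrak{M}(v,\eta)=0\bigr\}.
\eeq

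Next I would show that $\mathcal{Z}$ is a smooth Banach manifold by verifying that the universal section
\beq
\mathcal{F}(v,\eta,\mathfrak{M}):=\bigl(\partial_t v-\eta X_F(v,t)-X_H(v,t),\;\textstyle\int_0^1 F(v,t)\,dt\bigr)
\eeq
has surjective linearization at every zero. Granting that step, the projection $\pi:\mathcal{Z}\to\mathcal{MP}^\varepsilon(\Sigma)$ is Fredholm of index zero (this follows from the Fredholm theory already developed in the cited paper), and Sard--Smale identifies its regular values with the Moser pairs for which every critical point is non-degenerate. A standard Taubes-type argument then upgrades density in the $C^\varepsilon$-topology to density in the $C^\infty$-topology, giving a residual subset of regular Moser pairs in $\mathcal{MP}(\Sigma)$.

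The core technical step is the surjectivity of the linearization $D\mathcal{F}$. At a critical point $(v,\eta)$, the relevant linear map takes the form
\beq
(\hat v,\hat\eta,\hat H,\hat f)\longmapsto\bigl(D_v\hat v-\hat\eta X_F-X_{\hat H}-\eta X_{\hat F_{\hat f}},\;\textstyle\int_0^1(dF(v)\hat v+\hat F_{\hat f}(v,t))\,dt\bigr),
\eeq
where $D_v$ is the standard linearization of the loop equation. The operator obtained by freezing $\hat H=\hat f=0$ and letting only $(\hat v,\hat\eta)$ vary is Fredholm (shown in \cite{Albers_Frauenfelder_Leafwise_intersections_and_RFH}), so it suffices to prove that the cokernel is annihilated by the $\hat H$-variations alone. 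By a standard duality argument one supposes a pair $(\xi,c)$ in the $L^2$-orthogonal complement of the image and tests against compactly supported $\hat H$ with $\hat H(\cdot,t)=0$ for $t\in[0,\tfrac12]$; pairing this with the image forces $\xi\equiv 0$ on the interval $(\tfrac12,1)$, and unique continuation for the Cauchy--Riemann-type operator $D_v$ then extends the vanishing to all of $S^1$. Surjectivity of the constraint component $\int_0^1 F(v,t)\,dt$ forces $c=0$, completing the argument.

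The principal obstacle is exactly this cokernel step: one must ensure that perturbations from $\mathcal{H}$ (which are required to vanish on $[0,\tfrac12]$) still suffice to kill the cokernel of $D_v$, even though the Rabinowitz equation is coupled through the extra parameter $\eta$ and the constraint. The trick is to exploit the freedom that on $[\tfrac12,1]$ one has $F\equiv0$, so the critical loop $v$ solves a non-degenerate Hamiltonian flow equation driven by $H$ alone on that interval, and non-constancy of $v$ on any subinterval (which follows from $\eta$-independence of $H$-flow) provides the transversal directions needed. The remaining ingredients—bootstrapping regularity of the solutions in $\mathcal{Z}$ and the Taubes approximation step—are routine.
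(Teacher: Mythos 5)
First, a point of reference: the paper does not prove this proposition at all -- it is imported verbatim from \cite{Albers_Frauenfelder_Leafwise_intersections_and_RFH}, and the only proof supplied here is for the adapted variant (Proposition \ref{prop:generic_is_regular}), which is \emph{deduced} from the present statement by an openness/invariance argument. So the comparison is really with the cited source, whose proof is the same Sard--Smale scheme you describe: universal zero set of $d\A^\Mp$, surjectivity of the linearization using Hamiltonian perturbations supported in $t\in(\tfrac12,1)$, index-zero Fredholm projection, and a Taubes exhaustion to pass from $C^\varepsilon$ to $C^\infty$. Your architecture is the right one.

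However, the step you yourself single out as the core -- killing the cokernel -- is not correct as written, for three reasons. (a) The operator being linearized is the first-order ODE $\partial_tv-\eta X_F-X_H$ coupled with the constraint, i.e.\ the symmetric index-zero Hessian of $\A^\Mp$, not a Cauchy--Riemann operator; ``unique continuation'' here is just uniqueness for linear ODEs, and it must be applied to the \emph{correct} (inhomogeneous) adjoint equation. (b) The appeal to ``non-constancy of $v$ on any subinterval'' is both false (if $H$ vanishes near some $t_0\in(\tfrac12,1)$, the loop is constant there) and unnecessary: since $\hat H$ is time-dependent, at each fixed $t\in(\tfrac12,1)$ one can prescribe $d\hat H(\cdot,t)$ arbitrarily at the single point $v(t)$, which already forces a cokernel element $\xi$ to vanish pointwise on $(\tfrac12,1)$ with no injectivity input. (c) Most seriously, the order of the last two steps is reversed. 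Writing the cokernel element as a kernel element $(\hat w,\hat c)$ of the symmetric Hessian, the equation on $[0,\tfrac12]$ is $\nabla_t\hat w=\eta\,DX_F\hat w+\hat c\,X_F$, which is \emph{inhomogeneous} with source proportional to $\hat c$; you cannot propagate $\hat w\equiv0$ from $(\tfrac12,1)$ across $[0,\tfrac12]$ before knowing $\hat c=0$, and ``surjectivity of the constraint component'' is not an argument for $\hat c=0$. The correct order uses the special structure $F=\rho(t)f$: the linearized flow of $\eta X_F$ carries $X_F(v(s),s)$ to $\rho(s)X_f(v(t))$, so $\hat w(0)=0$ gives $\hat w(t)=\hat c\,\big(\int_0^t\rho\big)\,X_f(v(t))$ on $[0,\tfrac12]$, whence $\hat w(\tfrac12)=\hat c\,X_f(v(\tfrac12))$. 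Since the constraint forces $v(\tfrac12)\in f^{-1}(0)$ and $0$ is a regular value, $X_f(v(\tfrac12))\neq0$, so $\hat w(\tfrac12)=0$ forces $\hat c=0$, and only then does ODE uniqueness yield $\hat w\equiv0$. With this repair (and a properness/exhaustion remark, since the full critical set is noncompact in $\eta$ and ``Morse'' must be obtained as a countable intersection over $|\eta|\le N$), your argument closes and agrees with the proof in the cited reference.
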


We need the following slightly stronger version here.

\begin{Prop}[\cite{Albers_Frauenfelder_Leafwise_intersections_and_RFH}]\label{prop:generic_is_regular} 
A generic adapted Moser pair is regular (see Definition \ref{def:Moser_pair_Sigma}). 
\end{Prop}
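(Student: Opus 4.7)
The plan is to adapt the universal moduli argument behind Proposition~\ref{prop:generic_is_regular_general} to the restricted parameter space of adapted Moser pairs. Choose a $C^k$-completion $\B := C^k(\Sigma) \times \mathcal{H}^k$ (for sufficiently large $k$, with the support condition of Definition~\ref{def:set_of_half_constant_Hamiltonians} imposed on the second factor) and form the universal moduli space
\beq
\mathcal{N}^{\mathrm{univ}} := \big\{(v,\eta,f,H) \,\big|\, (v,\eta) \in \Crit\big(\A^{(F_f,H)}\big)\big\}.
\eeq
The strategy has three steps: (i) show that $\mathcal{N}^{\mathrm{univ}}$ is a smooth Banach manifold by verifying that the universal linearization is surjective at every critical point; (ii) apply Sard-Smale to the Fredholm index-zero projection $\pi : \mathcal{N}^{\mathrm{univ}} \to \B$, whose regular values are precisely those adapted Moser pairs for which $\A^{\Mp}$ is Morse; (iii) pass from $C^k$ to $C^\infty$ by a standard Taubes-type countable intersection argument.

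The heart of the matter is (i). By Proposition~\ref{prop:critical_points_give_LI}, at a critical point $(v,\eta)$ the point $v(\tfrac12)$ lies on $\Sigma_f$, the restriction $v|_{[\tfrac12,1]}$ is an $X_H$-trajectory, and $v|_{[0,\tfrac12]}$ is an $\eta$-rescaled Reeb trajectory on $(\Sigma_f,\alpha_f)$ by Lemma~\ref{lem:basic_lemma_in_contact_geometry}. In the proof of Proposition~\ref{prop:generic_is_regular_general}, surjectivity is obtained from $\hat H$-variations alone, by localizing near an injective time $t_0 \in (\tfrac12,1)$ of $v$ and pairing against a hypothetical cokernel element. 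Since the space of admissible $\hat H$-perturbations is unchanged in the adapted setting, that part of the argument carries over verbatim whenever $v|_{[\tfrac12,1]}$ is non-constant.

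The main obstacle is therefore the degenerate case in which $v|_{[\tfrac12,1]}$ is constant: then $v(\tfrac12) \notin \mathrm{supp}(H)$, and $v|_{[0,\tfrac12]}$ is a closed Reeb orbit on $(\Sigma_f,\alpha_f)$ of period $\eta$, which is now completely invisible to $\hat H$-variations. The $\hat f \in C^\infty(\Sigma)$-perturbations must carry the surjectivity argument on their own. Since varying $f$ produces exactly the full family of conformal deformations $\alpha_f = e^f\alpha$ of the contact form and simultaneously moves the embedding $\Sigma_f \hookrightarrow M$, this step reduces to the classical statement that for a $C^\infty$-generic contact form on $\Sigma$ all closed Reeb orbits are non-degenerate; one just has to verify that the same result persists when the perturbations are restricted to the conformal class $\{e^f\alpha\}$, which is standard. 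Combining the two cases establishes (i), and Sard-Smale together with the $C^\infty$ intersection argument then conclude the proof.
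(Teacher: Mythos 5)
Your overall scheme (universal moduli space, Sard--Smale for the index-zero projection, Taubes' argument to return to $C^\infty$) is a legitimate route, and it is genuinely different from the paper's proof, which is a soft two-line reduction: the Morse property of $\A^{(F,H)}$ depends only on the hypersurface $F^{-1}(0)$ and not on the defining function, and being a graph over $\Sigma$ is a $C^1$-open condition, so Proposition~\ref{prop:generic_is_regular_general} restricts directly to the adapted class. However, your step (i) --- the heart of your argument --- is wrong in the degenerate case, in two ways.

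First, a critical point with $v|_{[\frac12,1]}\equiv p$ constant is \emph{not} invisible to $\hat H$-variations: the admissible variations range over all of $\mathcal{H}$, not only over Hamiltonians supported in $\mathrm{supp}(H)$, and the relevant pairing with a cokernel element $(\hat w,\hat\eta^*)$ is $\int_{1/2}^{1}g\big(X_{\hat H}(p,t),\hat w(t)\big)\,dt$, which can be made nonzero whenever $\hat w(\tfrac12)\neq 0$; if $\hat w(\tfrac12)=0$, then $\hat w|_{[\frac12,1]}\equiv 0$, periodicity gives $\hat w(0)=0$, and the linearized (rescaled) Reeb flow on $[0,\tfrac12]$ forces $\hat\eta^*=0$ and $\hat w\equiv 0$. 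So $\hat H$-variations already carry the surjectivity argument at these points; no injectivity of $v|_{[\frac12,1]}$ is needed, since one is linearizing an ODE at a point, not a Cauchy--Riemann operator along a curve. Second, and more seriously, the fix you propose would not work if it were needed: nondegeneracy of the closed Reeb orbits of $e^{f}\alpha$ does \emph{not} make the corresponding critical points of $\A^{\Mp}$ nondegenerate. If $p\notin\mathrm{supp}(H)$, the vector field $R_f$ transported along the orbit on $[0,\tfrac12]$ and extended by the constant $R_f(p)$ on $[\tfrac12,1]$, together with $\hat\eta=0$, always lies in the kernel of the Hessian --- this is the $S^1$-reparametrization of the closed orbit, which no choice of $f$ removes --- and for $\eta=0$ the ``orbit'' is a constant $p\in\Sigma_f$, where the kernel is all of $T_p\Sigma_f$ and nondegeneracy of Reeb orbits says nothing. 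Such critical points can only be \emph{eliminated}, not made nondegenerate, which is achieved by perturbing $H$ so that the vector fields $X_H(\cdot,t)$, $t\in[\tfrac12,1]$, have no common zero on $\Sigma_f$ (an open and dense condition). As written, your proof therefore has a gap at its central step, although it is repairable either by the correct use of $\hat H$-variations or, more cheaply, by the paper's reduction to Proposition~\ref{prop:generic_is_regular_general}.
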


\begin{proof}
We note that the property of $\A^{(F,H)}$ being Morse is in fact a property of the hypersurface $\Sigma=F^{-1}(0)$ as long as the defining function $F$ has $0$ as a regular value as is apparent from the proof of Proposition A.2 in \cite{Albers_Frauenfelder_Leafwise_intersections_and_RFH}. Moreover, the property of $\Sigma_f$ of being a graph is a $C^1$-open condition. Thus, the assertion follows from Proposition \ref{prop:generic_is_regular_general}.
\end{proof}

For a regular contact-type Moser pair $\Mp$ the Rabinowitz Floer homology $\RFH_*(\Mp)$ is defined from the following chain complex
\beq
\RFC_k(\Mp):=\Big\{\xi=\sum_{c\colon\CZ(c)=k}\xi_c\,c\mid\#\{c\in\Crit{\A^\Mp}\mid\xi_c\neq0\in\Z/2\text{ and }\A^\Mp(c)\geq\kappa\}<\infty\;\forall\kappa\in\R\Big\}
\eeq
where the boundary operator is defined by counting gradient flow lines of $\A^\Mp$ in the sense of Floer homology, see \cite{Cieliebak_Frauenfelder_Restrictions_to_displaceable_exact_contact_embeddings} for details. 

If the Moser pair is of the form $\Mp=(F_f,0)$ then $\A^\Mp$ is never Morse. But for a generic $F_f$ the action functional  $\A^\Mp$ is Morse-Bott with critical manifold being the disjoint union of constant solutions of the form $(p,0)$, $p\in\Sigma_f$, and a family of circles corresponding to closed characteristics of $\om$ on $\Sigma_f $.

\begin{Def}
An adapted Moser pair is called weakly regular if it is of the form just described or if it is regular. The set of adapted weakly regular Moser pairs is denoted by $\mathcal{MP}^{reg}(\Sigma)$.
\end{Def}

\begin{Rmk}\label{rmk:perturb_by_Morse_fctn}
For adapted weakly regular Moser pairs $\Mp$ Rabinowitz Floer homology $\RFH_*(\Mp)$ can still be defined by taking the critical points of a Morse function on the critical manifolds as generators, see \cite{Cieliebak_Frauenfelder_Restrictions_to_displaceable_exact_contact_embeddings} for details. 
\end{Rmk}

For $\Mp_0,\Mp_1\in\MP^{reg}(\Sigma)$ there exist canonical isomorphisms
\beq
\zeta_{\Mp_0}^{\Mp_1}:\RFH_*(\Mp_0)\pf\RFH_*(\Mp_1)
\eeq
called continuation homomorphisms. They satisfy
\beq
\zeta_{\Mp_1}^{\Mp_2}\circ\zeta_{\Mp_0}^{\Mp_1}=\zeta_{\Mp_0}^{\Mp_2},\qquad\zeta_{\Mp}^{\Mp}=\id_{\RFH(\Mp)}\;.
\eeq
We refer the reader to \cite{Cieliebak_Frauenfelder_Restrictions_to_displaceable_exact_contact_embeddings} for details.

\begin{Def} \label{def:zeta_RFH}
The inverse limit defined with respect to the continuation homomorphism is denoted by
\beq
\RFH_*\equiv\RFH_*(\Sigma,M):=\lim_{\longleftarrow}\RFH_*(\Mp).
\eeq
Moreover, we refer by
\beq
\zeta^{\Mp}:\RFH_*\pf\RFH_*(\Mp)
\eeq
to the canonical map which in our case is an isomorphism.
\end{Def}

\begin{Rmk}
The main difficulty in defining Floer homology is compactness up to breaking of gradient flow lines. The new obstacle in Rabinowitz Floer homology is to establish uniform $L^\infty$ bounds for the Lagrange multiplier $\eta(s)$ along gradient flow lines with fixed asymptotics. The crucial ingredient is a period-action inequality for almost critical points. This has been established in the current set-up in \cite[Lemma 2.11]{Albers_Frauenfelder_Leafwise_intersections_and_RFH}. In this article we present an enhanced version of this lemma, see Lemma \ref{lem:crucial_lemma_for_eta_bound}. This enhancement is needed to study continuity properties of spectral invariants in Rabinowitz Floer homology.
\end{Rmk}

We recall the definition of the cut-off function $\beta:\R\pf\R$
\beq
\beta(r)=
\begin{cases}
r&\text{for }|r|\leq\delta/2\\
\delta&\text{for }r\geq\delta\\
-\delta&\text{for }r\leq-\delta
\end{cases}
\eeq
and
\beq
F_f(y,t):=
\begin{cases}
\beta(r-f(x))\,\rho(t)&\text{for }y=(x,r)\in\Sigma\times\R\\
-\delta\,\rho(t)&\text{for }y\in M\setminus\big(\Sigma\times\R\big)
\end{cases}
\eeq

\begin{Def}\label{def:constant_kappa(H)}
We introduce a semi-norm on the set $\mathcal{H}$, see Definition \ref{def:set_of_half_constant_Hamiltonians}, by
\beq
\kappa(H):=\int_0^1\max\left|\lambda(x)[X_H(x,t)]-H(x,t)\right|dt\quad\forall H\in\mathcal{H}\;.
\eeq
\end{Def}

\begin{Lemma}\label{lem:crucial_lemma_for_eta_bound}
For all $(u,\eta)\in C^{\infty}(S^1,M)\times\R$ with
\beq
||\nabla\A^\Mp(u,\eta)||<\frac\delta4
\eeq
we have the estimate
\beq\label{eqn:eta_estimate}
|\eta|\leq \frac{2}{2-\delta}\Big(|\A^\Mp(u,\eta)|+\delta/4+\kappa(H)\Big)
\eeq
where the norm of the gradient is given in equation \eqref{eqn:norm_of_gradient}.
\end{Lemma}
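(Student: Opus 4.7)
My plan is to substitute the gradient-equation data into the Liouville integral, thereby isolating $\eta$ times a coefficient close to $1$, and then to bound the remaining terms using the hypothesis. Set $\zeta:=\partial_t u - X_H - \eta X_{F_f}$ so that equation \eqref{eqn:norm_of_gradient} and the hypothesis give $\|\zeta\|_{L^2}<\delta/4$ and $\big|\int_0^1 F_f(u,t)\,dt\big|<\delta/4$. Writing $\int u^*\lambda=\int_0^1\lambda(\partial_t u)\,dt$ and substituting $\partial_t u = X_H + \eta X_{F_f} + \zeta$, the definition of $\A^\Mp$ rearranges to
\begin{equation*}
\eta\cdot I(u)\;=\;-\,\A^\Mp(u,\eta)\;-\;\int_0^1\big(\lambda(X_H)-H\big)\,dt\;-\;\int_0^1\lambda(\zeta)\,dt,
\end{equation*}
where $I(u):=\int_0^1\big(\lambda(X_{F_f})+F_f\big)\,dt$ is the coefficient that will need to be bounded below (the combinations of signs being dictated by the action convention and Definition \ref{def:constant_kappa(H)}).

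For the two error terms on the right-hand side I will use straightforward bounds. Remark \ref{rmk:norm_of_ReeB_Liouville_equal_1} gives $\|\lambda\|_{L^\infty}\leq 1$ on the twisted SFT neighborhood, so $\big|\int_0^1\lambda(\zeta)\,dt\big|\leq\|\zeta\|_{L^1}\leq\|\zeta\|_{L^2}<\delta/4$, while Definition \ref{def:constant_kappa(H)} directly yields $\big|\int_0^1(\lambda(X_H)-H)\,dt\big|\leq\kappa(H)$. Assembling these two estimates with the identity above immediately reduces everything to the lower bound on $|I(u)|$.

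The heart of the proof will be to show $|I(u)|\geq 1-\delta/2$. Since $X_{F_f}=\beta'(G_f)\rho(t)X_{G_f}$ with $G_f=r-f$, and $X_{G_f}$ preserves level sets of $G_f$, one has $dG_f(X_{F_f})=0$; coupled with $X_H\equiv 0$ on the support $[0,1/2]$ of $\rho$, this forces $\tfrac{d}{dt}G_f(u(t))=dG_f(\zeta(t))$ on $[0,1/2]$, so $G_f\circ u$ varies by at most a small constant times $\|\zeta\|_{L^2}<\delta/4$. On the other hand, $|\int F_f\,dt|<\delta/4$ together with monotonicity of $\beta$ forces $G_f\circ u$ to attain a value with $|G_f|\leq\delta/2$ somewhere in the support of $\rho$ (otherwise $|\beta(G_f)|\geq\delta/2$ throughout, contradicting the bound on $\int F_f$). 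Combined, these two facts pin $G_f\circ u$ into the linear region $|G_f|\leq\delta/2$ on the support of $\rho$, where $\beta'(G_f)=1$ and $\beta(G_f)=G_f$; hence $I(u)=\int_0^1\rho\,dt+\int_0^1 F_f\,dt=1+\int_0^1 F_f\,dt$, whose absolute value is at least $1-\delta/4\geq 1-\delta/2$.

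Combining all three estimates gives $|\eta|(1-\delta/2)\leq|\A^\Mp(u,\eta)|+\kappa(H)+\delta/4$, and multiplying by $2/(2-\delta)$ yields \eqref{eqn:eta_estimate}. The main obstacle is the lower bound on $|I(u)|$: one must convert the $L^2$-smallness of $\zeta$ into a genuine $L^\infty$ control on $G_f\circ u$, which requires pointwise bounds on $dG_f$ in the twisted SFT metric, and one must verify that $u$ does not escape the contact neighborhood on which Remark \ref{rmk:norm_of_ReeB_Liouville_equal_1} applies. The calibration \eqref{eqn:derivative_of_beta_leq_2} of the cut-off $\beta$ together with the normalization $\int_0^1\rho\,dt=1$ is what makes the delicate arithmetic of the factor $1-\delta/2$ close up.
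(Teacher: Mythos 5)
Your overall strategy is the same as the paper's: rewrite the action by substituting $\partial_t u = X_H+\eta X_{F_f}+\zeta$ so that $\eta$ appears with coefficient $\int_0^1(\lambda(X_{F_f})+F_f)\,dt$, bound the error terms by $\|\zeta\|_{L^2}<\delta/4$ and $\kappa(H)$, and bound the coefficient below by $(2-\delta)/2$ by showing that on $[0,\tfrac12]$ the loop stays in the region where $\beta$ is linear and $\lambda(X_{G_f})=1$. The paper organizes exactly this as Claim 1 (the action identity, assuming $u([0,\tfrac12])\subset U_{\delta/2}(f)$) and Claim 2 (the gradient hypothesis forces this inclusion); your confinement argument is the content of Claim 2.

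However, the arithmetic of your confinement step does not close as written. From $|\int_0^1 F_f\,dt|<\delta/4$ you deduce that $G_f\circ u$ attains a value of modulus at most $\delta/2$ somewhere in $\operatorname{supp}\rho$, and from $\|\zeta\|_{L^2}<\delta/4$ that $G_f\circ u$ varies by less than $\delta/4$; combining these yields only $|G_f\circ u|<3\delta/4$, which is outside the linear region $|G_f|\le\delta/2$ and, worse, need not lie in the neighborhood where the pointwise bounds $\|\nabla G_f\|=\|\lambda\|=1$ are guaranteed. The fix is to extract the sharper consequence of the hypothesis: if $|G_f\circ u|\ge\delta/4$ on all of $\operatorname{supp}\rho$ (necessarily with constant sign, by continuity), then monotonicity of $\beta$ gives $|\int_0^1 F_f\,dt|\ge\tfrac{\delta}{4}\int_0^1\rho\,dt=\tfrac{\delta}{4}$, a contradiction; so some point of $u|_{\operatorname{supp}\rho}$ lies in $U_{\delta/4}(f)$, and the variation bound of less than $\delta/4$ then keeps $u|_{[0,1/2]}$ inside $U_{\delta/2}(f)$. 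Moreover, the two obstacles you flag at the end (upgrading $L^2$-smallness of $\zeta$ to $L^\infty$ control of $G_f\circ u$, and ruling out escape from the SFT neighborhood) are both resolved by applying your variation estimate only on a maximal subinterval $[a,b]\subset[0,\tfrac12]$ on which $u$ crosses the annulus $U_{\delta/2}(f)\setminus U_{\delta/4}(f)$: there $\|\nabla G_f\|=1$, the interval has length at most $1$, and Cauchy--Schwarz gives $\delta/4\le\int_a^b|\tfrac{d}{dt}G_f(u)|\,dt\le\|\zeta\|_{L^2}$, contradicting the hypothesis. This crossing argument is precisely the paper's Claim 2.
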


\begin{Rmk}
We point out the constants appearing in Lemma \ref{lem:crucial_lemma_for_eta_bound} are independent of the function $f\in C^\infty(\Sigma)$ appearing in the Moser pair $\Mp=(F_f,H)$.
\end{Rmk}

\begin{proof}
We define
\beq
U_{\frac\delta2}(f):=\{(x,r)\mid x\in\Sigma,\; r\in(f(x)-\delta/2,f(x)+\delta/2)\}
\eeq
\underline{Claim 1}: Assume that $u(t)\in U_{\frac\delta2}(f)$ for all $t\in[0,\tfrac12]$, then
\beq
|\eta|\leq \frac{2}{2-\delta}\Big(|\A^\Mp(u,\eta)|+||\nabla\A^\Mp(u,\eta)||+\kappa(H)\Big)\;,
\eeq
where $\kappa(H)$ has been defined in Definition \ref{def:constant_kappa(H)}.

\begin{proof}[Proof of Claim 1]
We compute using Lemma \ref{lem:basic_lemma_in_contact_geometry}
\bean
|\A^\Mp(u,\eta)|&=\left|-\int_0^1u^*\lambda-\int_0^1H(t,u(t))dt-\eta\int_0^1F_f(t,u(t))dt\right|\\[1ex]
&=\left|-\int_0^1\lambda(u(t))\big[\partial_tu-X_{H}(t,u)-\eta X_{F_f}(t,u)\big]+\int_{\tfrac12}^1\lambda(u(t))\big[X_{H}(t,u)\big]dt\right.\\
&+\underbrace{\int_0^{\tfrac12}\lambda(u(t))\big[\eta \underbrace{X_{F_f}(t,u)}_{=\rho(t)X_{G_f}(u(t))}\big]dt}_{=\eta}-\int_0^1H(t,u(t))dt-\eta\int_0^1F_f(t,u(t))dt\Bigg|\\[1ex]
&=\left|\eta\Big(1-\int_0^1\underbrace{F_f(t,u(t))}_{\in(-\delta/2,\delta/2)}dt\Big)-\int_0^1\lambda(u(t))\big[\partial_tu-X_{H}(t,u)-\eta X_{F_f}(t,u)\big]\right.\\
&\left.+\int_{0}^1\Big[\lambda(u(t))\big[X_{H}(t,u)\big]-H(t,u(t))\Big]dt\right|\\[1ex]
&\geq\frac{|\eta|(2-\delta)}{2}-\underbrace{||\lambda_{|U_\delta(f)}||_{C^0}}_{=1}||\partial_tu-X_{H}(t,u)-\eta X_{F_f}(t,u)||_{L^1}-\kappa(H)\\[1ex]
&\geq\frac{|\eta|(2-\delta)}{2}-||\partial_tu-X_{H}(t,u)-\eta X_{F_f}(t,u)||_{L^2}-\kappa(H)\\[1ex]
&\geq\frac{|\eta|(2-\delta)}{2}-||\nabla\A^\Mp(u,\eta)||_{L^2}-\kappa(H)
\eea
where $||\lambda_{|U_\delta}||_{C^0}=1$  since $J$ is twisted SFT-like on $U_\delta(f)$. This inequality implies Claim 1.
\end{proof}

\noindent\underline{Claim 2}: If for $(u,\eta)$ there exists $t\in[0,\tfrac12]$ with $u(t)\not\in U_{\frac\delta2}(f)$ then $||\nabla_s\A^\Mp(u,\eta)||\geq\frac{\delta}{4}$.

\begin{proof}[Proof of Claim 2]
If in addition $u(t)\not\in U_{\frac\delta4}(f)$ holds for all $t\in[0,\tfrac12]$ then using \eqref{eqn:norm_of_gradient}
\beq
||\nabla\A^\Mp(u,\eta)||\geq\Big|\int_0^1F_f(u(t),t)dt\Big|\geq\frac{\delta}{4}\int_0^1\rho(t)dt=\frac{\delta}{4}\;.
\eeq
Otherwise there exists $t'\in[0,\tfrac12]$ with $u(t')\in U_{\frac\delta4}(f)$. Thus, we can find $0\leq a<b\leq\tfrac12 $ such that either
\beq
u(a)\in  \p U_{\frac\delta4}(f),\quad u(b)\in  \p U_{\frac\delta2}(f)\quad\text{and}\quad u(t)\in U_{\frac\delta2}(f)\setminus U_{\frac\delta4}(f)\;\forall t\in[a,b]
\eeq
or
\beq
u(a)\in  \p U_{\frac\delta2}(f),\quad u(b)\in  \p U_{\frac\delta4}(f)\quad\text{and}\quad u(t)\in U_{\frac\delta2}(f)\setminus U_{\frac\delta4}(f)\;\forall t\in[a,b]\;.
\eeq
We only treat the first case here. The second is completely analogous. We recall from Lemma \ref{lem:basic_lemma_in_contact_geometry} the definition $G_f(x,r)=r-f(x)$.
\bea
||\nabla\A^\Mp(u,\eta)||&\geq||\partial_tu-X_{H}(u,t)-\eta X_{F_f}(u,t)||_{L^2}\\[1ex]
&\geq\left(\int_a^b||\partial_tu-\underbrace{X_{H}(u,t)}_{=0}-\eta X_{F_f}(u,t)||^2dt\right)^{\tfrac12}\\[1ex]
&\geq\left(\int_a^b\frac{1}{||\nabla {G_f}||^2}\big|g_t(\partial_tu,\nabla {G_f})-\eta \underbrace{g(X_{F_f}(u,t),\nabla {G_f})}_{=0}\big|^2dt\right)^{\tfrac12}\\[1ex] 
&\geq\frac{1}{||\nabla {G_f}|_{U_{\frac\delta2}(f)}||_{C^0}}\left(\int_a^b\left|\frac{d}{dt}{G_f}(u(t))\right|^2dt\right)^{\tfrac12}\\[1ex]
&\geq\frac{1}{||\nabla {G_f}|_{U_{\frac\delta2}(f)}||_{C^0}}\int_a^b\left|\frac{d}{dt}{G_f}(u(t))\right|dt\\[1ex]
&\geq\frac{1}{||\nabla {G_f}|_{U_{\frac\delta2}(f)}||_{C^0}}\int_a^b\frac{d}{dt}{G_f}(u(t))dt\\[1ex]
&\geq\frac{\delta}{4||\nabla {G_f}|_{U_{\frac\delta2}(f)}||_{C^0}}\\
&=\frac{\delta}{4}
\eea
where we used $g(X_{F_f},\nabla {G_f})=d{G_f}(X_{F_f})=d{G_f}(\rho(t)X_{G_f})=0$ since on $U_{\frac\delta2}(f)$ it holds $F_f=\rho(t)G_f$. Moreover, according to Remark \ref{rmk:norm_of_ReeB_Liouville_equal_1} we have $||\nabla {G_f}||=||X_{G_f}||=1$ on $U_{\frac\delta2}(f)$. This proves Claim 2.
\end{proof}
To prove the Lemma we observe that the assumption $||\nabla\A^\Mp(u,\eta)||<\frac\delta4$ excludes the case treated in Claim 2. 
\end{proof}

\section{Warmup -- Spectral Invariants in Morse homology} \label{sec:warmup}

In this section we explain spectral invariants in the finite dimensional case. The main construction scheme is already visible in the finite dimensional, nevertheless, the proof of local Lipschitz continuity is much easier.

Let $M$ be a closed manifold and $f:M\pf\R$ a Morse function. We recall that the Morse chain complex $\CM_*(f)$ is the graded $\Z/2$ vector space generated by the set $\Crit (f)$ of critical points of $f$. The grading is given by the Morse index $\Morse$  of $f$. The boundary operator $\p:\CM_*(f)\pf\CM_{*-1}(f)$ is defined on generators by counting gradient flow lines. Indeed, we choose a Riemannian metric $g$ on $M$ such that stable and unstable manifold with respect to the negative gradient flow of $\nabla f=\nabla^gf$ intersect transversely, that is, $W^s(x)\pitchfork W^u(y)$ for all $x,y\in\Crit(f)$. Then the moduli space
\beq
\Mh(x_-,x_+):=\big\{\gamma:\R\pf M\mid\dot{\gamma}+\nabla f(\gamma)=0,\;\lim_{s\to\pm\infty}\gamma(s)=x_\pm\big\}
\eeq
is a smooth manifold of dimension $\dim\Mh(x_-,x_+)=\Morse(x_-)-\Morse(x_+)$. Moreover, $\R$ acts by shifting the $s$-coordinate and we denote the quotient by
\beq
\M(x_-,x_+):=\Mh(x_-,x_+)\big/\R\;.
\eeq
Moreover, if $\Morse(x_-)-\Morse(x_+)=1$ then $\M(x_-,x_+)$ is a finite set. We set 
\beq
m(x_-,x_+):=\#_2\M(x_-,x_+)
\eeq
the mod 2 number of elements in $\M(x_-,x_+)$. Then we can define the differential $\p=\p(f,g)$ as a linear map which is given on generators by
\beq
\p x_-:=\!\!\!\!\sum_{\substack{x_+\in\Crit(f)\\\Morse(x_-)-\Morse(x_+)=1}}\!\!\!\!m(x_-,x_+)\,x_+\;.
\eeq
It is a deep theorem in Morse homology that the identity
\beq
\p\circ\p=0
\eeq
holds, see \cite{Schwarz_Morse_homology} for details. Then
\beq
\HM_*(f,g):=\H_*(\CM_\bullet(f),\p(f,g))
\eeq
is the Morse homology of the pair $(f,g)$.

Up to canonical isomorphisms Morse homology does not depend on the Morse-Smale pair $(f,g)$. These canonical isomorphisms are called continuation homomorphisms and are constructed in the following way. For two Morse-Smale pairs $(f_\pm,g_\pm)$ we choose a $T>0$ and a smooth family $\{(f_s,g_s)\}_{s\in\R}$ of functions $f_s:M\pf\R$ and Riemannian metrics $g_s$ such that
\beq
f_s=\;\begin{cases}
f_-\text{ for } s\leq -T\\
f_+\text{ for } s\geq T
\end{cases}
\qquad
g_s=\;\begin{cases}
g_-\text{ for } s\leq -T\\
g_+\text{ for } s\geq T
\end{cases}
\eeq
For critical points $x_\pm\in\Crit(f_\pm)$ we consider the moduli spaces
\beq
\Nm(x_-,x_+)=\Nm(x_-,x_+;f_s,g_s):=\big\{\gamma:\R\pf M\mid\dot{\gamma}(s)+\nabla^{g_s} f_s\big(\gamma(s)\big)=0,\;\lim_{s\to\pm\infty}\gamma(s)=x_\pm\big\}\;.
\eeq
A homotopy $(f_s,g_s)$ is called regular if the moduli space $\Nm(x_-,x_+)$ is a smooth manifold of dimension $\dim\Nm(x_-,x_+)=\Morse(x_-)-\Morse(x_+)$. A generic homotopy is regular. Moreover, in the special case $f_s=f_-=f_+$ and $g_s=g_-=g_+$ we have the identity
\beq
\Nm(x_-,x_+)=\Mh(x_-,x_+)\;.
\eeq
If $\Morse(x_-)-\Morse(x_+)=0$ the space $\Nm(x_-,x_+)$ is compact and we set
\beq
n(x_-,x_+):=\#_2\Nm(x_-,x_+)\;.
\eeq
Then we can define a linear map
\bea
Z=Z(f_s,g_s):\CM_*(f_-)&\pf\CM_*(f_+)\\
x_-&\mapsto\!\!\!\!\sum_{\substack{x_+\in\Crit(f_+)\\\Morse(x_-)-\Morse(x_+)=0}}\!\!\!\!n(x_-,x_+)\,x_+\;.
\eea
We denote $\p_\pm:=\p(f_\pm,g_\pm)$. In the same manner as $\p\circ\p=0$ one proves in Morse homology
\beq
Z\circ\p_-=\p_+\circ Z\;,
\eeq
see \cite{Schwarz_Morse_homology}. In particular, on homology we obtain the map
\beq
\zeta:\HM_*(f_-,g_-)\pf\HM_*(f_+,g_+)\\
\eeq
which is the continuation homomorphism. By a homotopy-of-homotopies argument it is proved that $\zeta$ is independent of the chosen homotopy $(f_s,g_s)$, see \cite{Schwarz_Morse_homology}. Moreover, the continuation homomorphism is functorial in the following sense. If we fix three Morse-Smale pairs $(f_a,g_a)$, $(f_b,g_b)$, and $(f_c,g_c)$ we denote the corresponding continuation homomorphisms by $\zeta_a^b:\HM_*(f_a,g_a)\pf\HM_*(f_b,g_b)$ and similarly $\zeta_a^c$ and $\zeta_b^c$. Then we have the following identities
\beq
\zeta_a^c=\zeta^c_b\circ\zeta^b_a\quad\text{and}\quad\zeta^a_a=\id_{\HM_*(f_a,g_a)}\;.
\eeq
In particular, we conclude that $\zeta_a^b$ is an isomorphism with inverse $\zeta_b^a$.

\begin{Def}
Let $(f,g)$ be a Morse-Smale pair. For $\xi=\sum_x\xi_xx\neq0\in\CM_*(f)$ we set
\beq
f(\xi):=\max\{f(x)\mid \xi_x\neq0\}
\eeq
and for $X\neq0\in\HM_*(f,g)$ we set
\beq
\sigma(X):=\min\{f(\xi)\mid X=[\xi]\}\;.
\eeq
We call $\sigma(X)$ the spectral value of $X$. Thus, $\sigma$ is a map
\beq
\sigma:\bigcup_{(f,g)\text{ Morse-Smale}}\HM_*(f,g)\pf\R\;.
\eeq
\end{Def}

\begin{Thm}\label{thm:spectral_invariants_estimate_Morse}
Let $(f_\pm,g_\pm)$ be two Morse-Smale pairs. Let $X\neq0\in\HM_*(f_-,g_-)$ then
\beq
\min(f_+-f_-)\leq\sigma(\zeta(X))-\sigma(X)\leq\max(f_+-f_-)\;.
\eeq
\end{Thm}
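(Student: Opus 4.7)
The plan is to exploit the standard action estimate for continuation trajectories: along any continuation flow line $\gamma \in \Nm(x_-, x_+)$ for a homotopy $(f_s, g_s)$ joining $(f_-, g_-)$ to $(f_+, g_+)$, the fundamental theorem of calculus combined with the gradient flow equation gives
\begin{equation}
f_+(x_+) - f_-(x_-) = \int_{-\infty}^{\infty} \frac{d}{ds} f_s(\gamma(s))\, ds = \int_{-\infty}^{\infty} \bigl(\partial_s f_s\bigr)(\gamma(s))\, ds - \int_{-\infty}^{\infty} \bigl|\nabla^{g_s} f_s(\gamma(s))\bigr|_{g_s}^2\, ds.
\end{equation}
The second integral is non-negative, so discarding it yields $f_+(x_+) - f_-(x_-) \leq \int (\partial_s f_s)(\gamma(s))\, ds$, and the key point is to choose the homotopy so that the right-hand side is bounded by $\max(f_+ - f_-)$.

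To that end, I would take the \emph{linear monotone} homotopy $f_s := (1 - \chi(s)) f_- + \chi(s) f_+$ for a smooth monotone cutoff $\chi : \R \to [0,1]$ with $\chi \equiv 0$ for $s \leq -T$ and $\chi \equiv 1$ for $s \geq T$, while homotoping the metrics in any regular way. Then $\partial_s f_s = \chi'(s)(f_+ - f_-)$ with $\int \chi'(s)\, ds = 1$, so $\int (\partial_s f_s)(\gamma(s))\, ds \leq \max(f_+ - f_-)$ pointwise. Consequently, whenever $n(x_-, x_+) \neq 0$ we obtain $f_+(x_+) \leq f_-(x_-) + \max(f_+ - f_-)$.

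From here the upper bound on $\sigma(\zeta(X)) - \sigma(X)$ follows by the standard minimax argument: given $\varepsilon > 0$, pick a representative $\xi = \sum \xi_{x_-} x_- \in \CM_*(f_-)$ of $X$ with $f_-(\xi) \leq \sigma(X) + \varepsilon$. Then $Z(\xi) \in \CM_*(f_+)$ represents $\zeta(X)$, and every generator $x_+$ appearing in $Z(\xi)$ comes from some $x_-$ with $\xi_{x_-} \neq 0$ via a trajectory contributing to $n(x_-, x_+)$, hence $f_+(x_+) \leq f_-(x_-) + \max(f_+ - f_-) \leq \sigma(X) + \varepsilon + \max(f_+ - f_-)$. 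Letting $\varepsilon \to 0$ gives $\sigma(\zeta(X)) \leq \sigma(X) + \max(f_+ - f_-)$.

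For the lower bound I would apply the upper bound to the reversed homotopy. The continuation $\zeta' : \HM_*(f_+, g_+) \to \HM_*(f_-, g_-)$ associated to a homotopy from $(f_+, g_+)$ to $(f_-, g_-)$ satisfies $\zeta' \circ \zeta = \id$ by the functoriality and independence of homotopy recalled in the text, so $\sigma(X) = \sigma(\zeta'(\zeta(X))) \leq \sigma(\zeta(X)) + \max(f_- - f_+) = \sigma(\zeta(X)) - \min(f_+ - f_-)$, which rearranges to the desired lower bound. The only technical point to watch is that the linear homotopy need not be regular; this is harmless because $\sigma$ is defined on homology and both $\zeta$ and $\sigma(\zeta(X))$ are independent of the choice of regular homotopy, so one approximates the linear homotopy by a regular one and passes to the limit in the action inequality — this robustness under $C^0$-small perturbation is the step I expect to require the most care, but in the finite-dimensional Morse setting it is routine since the relevant moduli spaces $\Nm(x_-, x_+)$ in index $0$ are compact.
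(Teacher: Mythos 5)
Your proof is correct and follows essentially the same route as the paper: the same action estimate along continuation trajectories for the linear monotone homotopy, the same minimax argument for the upper bound, the symmetry argument via the inverse continuation map for the lower bound, and the same approximation remark for non-regular homotopies.
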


\begin{Rmk}
The estimate in Theorem \ref{thm:spectral_invariants_estimate_Morse} is sharp as can be seen for example by choosing $f_+=f_-+$const.  
\end{Rmk}

An immediate corollary of Theorem \ref{thm:spectral_invariants_estimate_Morse} is the following.

\begin{Cor}\label{cor:spectral_invariants_do_not_depend_on_g_Morse}
The spectral invariant $\sigma(X)$ does not depend on the Riemannian metric $g$.
\end{Cor}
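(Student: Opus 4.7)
The plan is to deduce the statement as an immediate specialization of Theorem \ref{thm:spectral_invariants_estimate_Morse}. Fix a Morse function $f:M\pf\R$ and two Riemannian metrics $g_-,g_+$ such that both $(f,g_-)$ and $(f,g_+)$ are Morse--Smale. I will apply the theorem with $f_-=f_+=f$, so that the Lipschitz-type sandwich
\beq\nonumber
\min(f_+-f_-)\leq\sigma(\zeta(X))-\sigma(X)\leq\max(f_+-f_-)
\eeq
collapses to $0\leq\sigma(\zeta(X))-\sigma(X)\leq0$, giving $\sigma(\zeta(X))=\sigma(X)$. Since $\zeta:\HM_*(f,g_-)\pf\HM_*(f,g_+)$ is the canonical isomorphism along which the two homology groups are to be identified, this is exactly the statement that $\sigma$ is independent of the chosen metric.

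The only point that requires a remark is the existence of a homotopy $\{g_s\}_{s\in\R}$ connecting $g_-$ to $g_+$ that is admissible for the construction of $\zeta$. Since the space of Riemannian metrics on $M$ is convex, we may take the linear interpolation (suitably cut off outside $[-T,T]$) together with the constant homotopy $f_s\equiv f$. A generic small perturbation makes this homotopy regular in the sense of the section, and the resulting continuation map $\zeta$ is independent of the choice of homotopy by the homotopy-of-homotopies argument recalled above.

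There is essentially no obstacle here: the heavy lifting was done in Theorem \ref{thm:spectral_invariants_estimate_Morse}, and the corollary is just the diagonal case $f_-=f_+$. The only conceptual point worth flagging is that the chain-level functional $\xi\mapsto f(\xi)$ does not see the metric at all (the generators $\Crit(f)$ are determined by $f$ alone), so all the metric dependence is hidden in the equivalence relation $\sim$ defined by $\p(f,g)$; the corollary says that the minimax over this equivalence relation is nevertheless metric-independent.
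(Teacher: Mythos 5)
Your proof is correct and is exactly the argument the paper intends: the corollary is the diagonal case $f_-=f_+=f$ of Theorem \ref{thm:spectral_invariants_estimate_Morse}, which forces $\sigma(\zeta(X))=\sigma(X)$ along the canonical continuation isomorphism. Your additional remarks on the convexity of the space of metrics and on the metric-independence of the chain-level functional are accurate but not needed beyond what the theorem already supplies.
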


As preparation of the proof of Theorem \ref{thm:spectral_invariants_estimate_Morse} we consider the following special homotopy. We fix a smooth monotone function $\beta:\R\pf[0,1]$ satisfying $\beta(s)=0$ for $s\leq-T$ and $\beta(s)=1$ for $s\geq T$. Then we set
\beq
f_s:=\beta(s)f_++(1-\beta(s))f_-=\beta(s)(f_+-f_-)+f_-
\eeq
and choose any homotopy $g_s$ from $g_-$ to $g_+$. 
\begin{Lemma}\label{lem:action_estimate_continuation_Morse}
Let $(f_s,g_s)$ as above. If $\Nm(x_-,x_+;f_s,g_s)\neq\emptyset$ we have  
\beq
f_+(x_+)-f_-(x_-)\leq\max(f_+-f_-)
\eeq
\end{Lemma}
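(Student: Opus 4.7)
The plan is to track the value of the time-dependent function $f_s$ along the continuation flow line and integrate an energy-type identity, the same mechanism used in standard Floer-theoretic action estimates. Concretely, pick $\gamma \in \mathcal{N}(x_-,x_+;f_s,g_s)$ and consider the real-valued function $s \mapsto f_s(\gamma(s))$. By the chain rule and the gradient equation $\dot\gamma(s) = -\nabla^{g_s} f_s(\gamma(s))$, its derivative equals
\beq
\frac{d}{ds} f_s(\gamma(s)) = (\partial_s f_s)(\gamma(s)) - \big|\nabla^{g_s} f_s(\gamma(s))\big|_{g_s}^2.
\eeq

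Next I would exploit the explicit linear form of the homotopy, namely $f_s = \beta(s)(f_+ - f_-) + f_-$, which gives $\partial_s f_s = \beta'(s)(f_+ - f_-)$. Integrating the displayed identity from $s = -\infty$ to $s = +\infty$ and using the asymptotic conditions $\gamma(\pm\infty) = x_\pm$, I obtain
\beq
f_+(x_+) - f_-(x_-) = \int_{-\infty}^{\infty} \beta'(s)\bigl(f_+ - f_-\bigr)\!\bigl(\gamma(s)\bigr)\,ds \;-\; \int_{-\infty}^{\infty}\bigl|\nabla^{g_s} f_s(\gamma(s))\bigr|_{g_s}^2 ds.
\eeq
The second integral is non-negative and may be discarded for an upper bound.

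For the first integral, I would use $\beta' \geq 0$ (since $\beta$ is monotone) together with the pointwise bound $(f_+ - f_-)(\gamma(s)) \leq \max(f_+ - f_-)$ to conclude
\beq
f_+(x_+) - f_-(x_-) \;\leq\; \max(f_+ - f_-)\cdot\int_{-\infty}^{\infty}\beta'(s)\,ds \;=\; \max(f_+ - f_-),
\eeq
where the final equality uses $\beta(-\infty) = 0$ and $\beta(+\infty) = 1$. This is exactly the claimed estimate.

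There is no real obstacle here: the argument is purely the fundamental theorem of calculus combined with the sign of $\beta'$ and the non-negativity of the $L^2$ energy term. The only point to be careful about is ensuring the integrals converge, which follows from the exponential decay of $\gamma$ at $\pm\infty$ (a standard consequence of transverse convergence to non-degenerate critical points) and the fact that $\beta'$ is compactly supported in $[-T,T]$.
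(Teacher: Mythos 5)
Your proposal is correct and follows exactly the paper's argument: differentiate $s\mapsto f_s(\gamma(s))$, use the gradient-flow equation to split the derivative into the non-positive energy term and the $\beta'(s)(f_+-f_-)$ term, discard the former, and bound the latter by $\max(f_+-f_-)\int\beta'=\max(f_+-f_-)$. No differences worth noting.
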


\begin{proof}
We choose an element $\gamma\in\Nm(x_-,x_+;f_s,g_s)$ and estimate
\bea
f_+(x_+)-f_-(x_-)&=\int_{-\infty}^\infty\frac{d}{ds}f_s(\gamma(s))ds\\
&=\int_{-\infty}^\infty \left\{df_s(\gamma(s))[\dot{\gamma}(s)]+\frac{\p f}{\p s}(\gamma(s))\right\}ds\\
&=\int_{-\infty}^\infty \left\{-df_s(\gamma(s))\Big[\nabla^{g_s}f_s\big(\gamma(s)\big)\Big]+\beta'(s)(f_+-f_-)(\gamma(s))\right\}ds\\
&=\int_{-\infty}^\infty \left\{\underbrace{-g_s(\gamma(s))\Big[\nabla^{g_s}f_s(\gamma(s)),\nabla^{g_s}f_s(\gamma(s))\Big]}_{\leq0}+\underbrace{\beta'(s)}_{\geq0}\underbrace{(f_+-f_-)(\gamma(s))}_{\leq\max(f_+-f_-)}\right\}ds\\
&\leq\max(f_+-f_-)\int_{-\infty}^\infty \beta'(s)ds\\
&=\max(f_+-f_-)\;.
\eea 
\end{proof}

\begin{Cor}\label{cor:action_estimate_continuation_Morse}
Let $X\neq0\in\HM_*(f_-,g_-)$, then
\beq
\sigma(\zeta(X))-\sigma(X)\leq\max(f_+-f_-)\;.
\eeq
\end{Cor}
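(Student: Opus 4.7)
The plan is simple: given a nonzero class $X \in \HM_*(f_-, g_-)$, exhibit an optimal cycle representative $\xi$, push it forward by the continuation chain map associated to the \emph{linear} homotopy $f_s = \beta(s)(f_+ - f_-) + f_-$ of Lemma~\ref{lem:action_estimate_continuation_Morse}, and read off the action bound of the image directly from that lemma.

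First, since $M$ is closed and $f_-$ is Morse, $\Crit(f_-)$ is finite, so $\{f_-(\xi) \mid [\xi] = X\}$ is a finite subset of $\R$ and the minimum defining $\sigma(X)$ is attained. Choose $\xi = \sum_{x_-} \xi_{x_-} x_- \in \CM_*(f_-)$ with $[\xi] = X$ and $f_-(\xi) = \sigma(X)$; in particular every generator $x_-$ appearing with nonzero coefficient satisfies $f_-(x_-) \leq \sigma(X)$.

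Second, let $g_s$ be any regular homotopy of metrics from $g_-$ to $g_+$ and form $Z = Z(f_s, g_s)$ for the linear homotopy above. As a chain map, $Z$ induces a continuation homomorphism on homology, and by homotopy-invariance of the continuation (the homotopy-of-homotopies argument recalled earlier) this induced map is $\zeta$. Thus $Z(\xi)$ is a cycle representing $\zeta(X)$. For every $x_+$ appearing with nonzero coefficient in $Z(\xi)$, there exists $x_-$ with $\xi_{x_-} \neq 0$ and $\Nm(x_-, x_+; f_s, g_s) \neq \emptyset$, so Lemma~\ref{lem:action_estimate_continuation_Morse} yields
\[
f_+(x_+) \;\leq\; f_-(x_-) + \max(f_+ - f_-) \;\leq\; \sigma(X) + \max(f_+ - f_-).
\]
Maximizing over such $x_+$ gives $f_+(Z(\xi)) \leq \sigma(X) + \max(f_+ - f_-)$, and the definition of $\sigma$ delivers $\sigma(\zeta(X)) \leq f_+(Z(\xi)) \leq \sigma(X) + \max(f_+ - f_-)$, which is the assertion.

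Nothing in this argument is genuinely delicate: the real work is hidden inside Lemma~\ref{lem:action_estimate_continuation_Morse}. The only conceptual point worth flagging is that the action estimate is tied to the \emph{specific} affine interpolation, whereas $\zeta$ is defined independently of the choice of homotopy; homotopy-invariance of continuation is what lets us nonetheless identify $[Z(\xi)]$ with $\zeta(X)$. In the Rabinowitz setting of the remainder of the paper this will become the main obstacle, since without Morse hypotheses one cannot directly select an optimal chain-level representative, which is precisely why the local Lipschitz property emphasized in the introduction is needed.
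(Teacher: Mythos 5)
Your proof is correct and follows essentially the same route as the paper: push a chain-level representative through the continuation map $Z$ for the affine homotopy and apply Lemma~\ref{lem:action_estimate_continuation_Morse} to a nonempty moduli space $\Nm(x_-,x_+)$. The only cosmetic differences are that you fix an optimal representative up front (using finiteness of $\Crit(f_-)$) where the paper minimizes over all representatives at the end, and that you achieve regularity by perturbing only the metric homotopy $g_s$ while keeping $f_s$ affine, whereas the paper approximates the whole homotopy by regular ones and notes the action estimate survives up to an arbitrarily small error; both devices are standard and legitimate.
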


\begin{proof}
We first assume that the homotopy $f_s=\beta(s)f_++(1-\beta(s))f_-$ is regular. Let $\xi=\sum_x\xi_xx\in\CM_*(f_-)$ be a representative of $X$. Then 
\beq
Z(\xi)=\sum_y\underbrace{\Big(\sum_{x} \xi_xn(x,y)\Big)}_{\eta_y}y
\eeq
and thus
\beq
f_+(Z(\xi))=\max\{f_+(y)\mid\eta_y\neq0\}\;.
\eeq
Now we choose $y\in\Crit(f_+)$ s.t.~$f_+(Z(\xi))=f_+(y)$. Since $\eta_y\neq0$ there exists $x\in\Crit(f_-)$ such that $\xi_xn(x,y)\neq0$, i.e.~$\xi_x\neq0$ and $n(x,y)\neq0$. In particular, $\Nm(x,y)\neq\emptyset$ and by Lemma \ref{lem:action_estimate_continuation_Morse} we conlude
\beq
f_+(y)-f_-(x)\leq\max(f_+-f_-)\;.
\eeq
Then using $\xi_x\neq0$ we estimate
\bea
f_+(Z(\xi))-f_-(\xi)\leq f_+(y)-f_-(x)\leq\max(f_+-f_-)
\eea
and
\bea
\sigma(\zeta(X))-\sigma(X)&=\min\{f_+(\eta)\mid[\eta]=\zeta(X)\}-\min\{f_-(\xi)\mid[\xi]=X\}\\
&\leq\min\{f_+(\eta)\mid \eta=Z(\xi),\;[\xi]=X\}-\min\{f_-(\xi)\mid[\xi]=X\}\\
&\leq\min\{f_+(Z(\xi))\mid [\xi]=X\}-\min\{f_-(\xi)\mid[\xi]=X\}\\
&\leq\min\{f_-(\xi)+\max(f_+-f_-)\mid [\xi]=X\}-\min\{f_-(\xi)\mid[\xi]=X\}\\
&=\max(f_+-f_-)\;.
\eea
If the homotopy $f_s$ from above is not regular then we can approximate it by regular homotopies. The Corollary follows by noting that the estimate of Lemma \ref{lem:action_estimate_continuation_Morse} is correct up to an arbitrarily small error.
\end{proof}

\begin{proof}[Proof of Theorem \ref{thm:spectral_invariants_estimate_Morse}]
According to Corollary \ref{cor:action_estimate_continuation_Morse} we have
\beq
\sigma(\zeta(X))-\sigma(X)\leq\max(f_+-f_-)\;.
\eeq
and thus by symmetry
\beq
\sigma(X)-\sigma(\zeta(X))=\sigma(\zeta^{-1}(\zeta(X)))-\sigma(\zeta(X))\leq\max(f_--f_+)=-\min(f_+-f_-)\;.
\eeq
\end{proof}

With help of the continuation homomorphism we define the inverse limit
\beq
\HM_*:=\lim_{\longleftarrow}\HM_*(f,g)\;.
\eeq
Thus, for any Morse-Smale pair $(f,g)$ we have an isomorphism
\beq
\zeta^{(f,g)}:\HM_*\pf\HM_*(f,g).
\eeq

\begin{Def}
For a Morse function $f$ and $Y\neq0\in\HM_*$ we set
\beq
\sigma_f(Y):=\sigma\Big(\zeta^{(f,g)}(Y)\Big)
\eeq
where $g$ is any Riemannian metric so that $(f,g)$ is Morse-Smale. Moreover, for fixed $Y\neq0\in\HM_*$ we define
\bea
\rho_Y:\{f\in C^\infty\mid f\text{ is Morse}\}&\pf\R\\
f&\mapsto \sigma_f(Y)\;.
\eea
\end{Def}

\begin{Rmk}
$\sigma_f$  is well-defined, see Corollary \ref{cor:spectral_invariants_do_not_depend_on_g_Morse}. 
\end{Rmk}

\begin{Cor}\label{cor:spectral_invariant_is_1_Lipschitz_in_f_Morse}
Let $Y\neq0\in\HM_*$ and $f_\pm$ be Morse functions then
\beq
|\rho_Y(f_+)-\rho_Y(f_-)|\leq||f_+-f_-||_{C^0(\Sigma)}:=\max_M |f_+-f_-|\;.
\eeq
That is, $\rho_Y$ is 1-Lipschitz continuous with respect to the $C^0$ norm.
\end{Cor}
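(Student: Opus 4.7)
The plan is to derive this corollary directly from Theorem \ref{thm:spectral_invariants_estimate_Morse} together with the functoriality of the continuation homomorphisms used in the construction of the inverse limit $\HM_*$.

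First I would pick auxiliary Riemannian metrics $g_\pm$ so that $(f_\pm,g_\pm)$ are both Morse--Smale pairs; such $g_\pm$ exist generically, and by Corollary \ref{cor:spectral_invariants_do_not_depend_on_g_Morse} the value $\rho_Y(f_\pm) = \sigma(\zeta^{(f_\pm,g_\pm)}(Y))$ does not depend on this choice. Write $X_\pm := \zeta^{(f_\pm,g_\pm)}(Y) \in \HM_*(f_\pm,g_\pm)$; since $Y\neq 0$ and $\zeta^{(f_\pm,g_\pm)}$ are isomorphisms, we have $X_\pm \neq 0$ and
\[
\sigma(X_\pm) = \rho_Y(f_\pm).
\]
Let $\zeta : \HM_*(f_-,g_-) \to \HM_*(f_+,g_+)$ denote the continuation homomorphism between the two pairs. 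By the functoriality relation $\zeta_a^c = \zeta_b^c \circ \zeta_a^b$ applied to the inverse limit structure, we obtain
\[
\zeta(X_-) = \zeta \circ \zeta^{(f_-,g_-)}(Y) = \zeta^{(f_+,g_+)}(Y) = X_+.
\]

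Applying Theorem \ref{thm:spectral_invariants_estimate_Morse} to $X_- \neq 0$ then yields
\[
\min(f_+-f_-) \;\leq\; \sigma(\zeta(X_-)) - \sigma(X_-) \;=\; \rho_Y(f_+) - \rho_Y(f_-) \;\leq\; \max(f_+-f_-).
\]
Since $|\min(f_+-f_-)|$ and $|\max(f_+-f_-)|$ are both bounded by $\|f_+-f_-\|_{C^0(M)}$, this immediately gives
\[
|\rho_Y(f_+) - \rho_Y(f_-)| \leq \|f_+ - f_-\|_{C^0(M)},
\]
which is the claimed $1$-Lipschitz estimate.

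There is no real obstacle here: the entire content of the corollary is already packed into Theorem \ref{thm:spectral_invariants_estimate_Morse}, and the only thing to verify is that the spectral invariants $\rho_Y(f_\pm)$ defined via the inverse limit are compatible with the continuation map $\zeta$ appearing in that theorem. That compatibility is precisely the functoriality $\zeta_a^c = \zeta_b^c \circ \zeta_a^b$ recorded earlier in the section, so the argument reduces to bookkeeping rather than new analysis.
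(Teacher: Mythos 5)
Your argument is correct and is essentially the paper's own proof: both choose metrics $g_\pm$ making $(f_\pm,g_\pm)$ Morse--Smale, identify $X_+=\zeta(X_-)$ via functoriality of the continuation maps, and then read off the two-sided bound from Theorem \ref{thm:spectral_invariants_estimate_Morse}. No differences worth noting.
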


\begin{proof}
We choose $g_\pm$ such that $(f_\pm,g_\pm)$ are Morse-Smale and set $X_\pm:=\zeta^{(f_\pm,g_\pm)}(Y)$. Then
\bea
|\rho_Y(f_+)-\rho_Y(f_-)|&=|\sigma(X_+)-\sigma(X_-)|\\
&\leq\max\{\max(f_--f_+),\;-\min(f_+-f_-)\}\\
&=||f_+-f_-||_{C^0(\Sigma)}
\eea
where in the inequality we use Theorem \ref{thm:spectral_invariants_estimate_Morse} and $X_+=\zeta(X_-)$.
\end{proof}

\begin{Def}
For $f\in C^1(M)$ we define the spectrum of $f$ 
\beq
\S(f):=f\big(\Crit(f)\big)
\eeq
to be the set of critical values of $f$.
\end{Def}

\begin{Cor}\label{cor:spectral_invariants_extend_to_C_0_Morse}
Let $Y\neq0\in\HM_*$. The map $\rho_Y$ has a unique extension to a 1-Lipschitz continuous function $\rho_Y:C^0(M)\pf\R$. Moreover, if $f\in C^1(M)$ then $\rho_Y(f)$ is in the spectrum of $f$:
\beq
\rho_Y(f)\in\S(f)\;.
\eeq
\end{Cor}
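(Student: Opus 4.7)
The plan is to define $\rho_Y$ on all of $C^0(M)$ by density and continuity, and then to extract the critical-value conclusion by passing through Morse approximations in the $C^1$-topology.

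First I would establish the extension. The set of Morse functions is a dense subset of $(C^0(M),\|\cdot\|_{C^0})$; indeed it is $C^\infty$-dense by standard Morse-theoretic arguments (Sard plus transversality), and a fortiori $C^0$-dense. By Corollary \ref{cor:spectral_invariant_is_1_Lipschitz_in_f_Morse}, the map $\rho_Y$ is $1$-Lipschitz on this dense subset with respect to the $C^0$-norm. Since $(C^0(M),\|\cdot\|_{C^0})$ is a complete metric space, every uniformly continuous real-valued function defined on a dense subset admits a unique continuous extension, and the extension retains the same Lipschitz constant. This yields the required $\rho_Y:C^0(M)\pf\R$.

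For the spectral claim, I would fix $f\in C^1(M)$ and choose a sequence of Morse functions $f_n\to f$ in the $C^1$-topology (not merely $C^0$); such approximations exist on a closed manifold by standard genericity. Since $\|f_n-f\|_{C^0}\to 0$, Lipschitz continuity gives $\rho_Y(f_n)\to\rho_Y(f)$. For each $n$, the definition of $\sigma$ on a Morse pair $(f_n,g_n)$ forces $\rho_Y(f_n)=f_n(x_n)$ for some $x_n\in\Crit(f_n)$: writing any minimizing representative $\xi_n=\sum_x(\xi_n)_x\,x$ of $\zeta^{(f_n,g_n)}(Y)$, the value $f_n(\xi_n)=\max\{f_n(x)\mid(\xi_n)_x\neq 0\}$ is attained at such an $x_n$ and equals $\rho_Y(f_n)$. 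In particular $\rho_Y(f_n)\in\S(f_n)$. By compactness of $M$, I would pass to a subsequence along which $x_n\to x\in M$. Uniform convergence $f_n\to f$ then yields $f_n(x_n)\to f(x)$, hence $\rho_Y(f)=f(x)$; uniform convergence $df_n\to df$ together with $df_n(x_n)=0$ yields $df(x)=0$. Therefore $x\in\Crit(f)$ and $\rho_Y(f)=f(x)\in\S(f)$.

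There is no substantive obstacle; the one point requiring care is insisting on $C^1$-convergence of the Morse approximations rather than merely $C^0$-convergence, since the extraction of a critical point of $f$ in the limit relies on uniform convergence of differentials. Everything else is formal, using Corollary \ref{cor:spectral_invariant_is_1_Lipschitz_in_f_Morse} together with compactness of $M$.
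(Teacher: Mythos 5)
Your proposal is correct and follows essentially the same route as the paper: extend by $1$-Lipschitz continuity from the $C^0$-dense set of Morse functions (the paper just spells out the Cauchy-sequence argument that your appeal to the general extension theorem packages up), and for the spectral claim approximate $f$ by Morse functions in the $C^1$-topology, extract a convergent subsequence of critical points by compactness of $M$, and use uniform convergence of the differentials to see the limit is a critical point of $f$ with the right value. The point you flag as requiring care --- that the Morse approximation must be taken in $C^1$ rather than merely $C^0$ --- is exactly the point the paper emphasizes as well.
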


\begin{proof}
We recall that $\{f\in C^\infty\mid f\text{ is Morse}\}$ is dense in $C^0$. Therefore, for $f\in C^0(M)$ there exist Morse functions $f_n$ with $||f_n-f||_{C^0}\to0$. By Corollary \ref{cor:spectral_invariant_is_1_Lipschitz_in_f_Morse}
\beq
|\rho_Y(f_n)-\rho_Y(f_m)|\leq||f_n-f_m||_{C^0}
\eeq
the sequence $(\rho_Y(f_n))$ is a Cauchy sequence in $\R$, thus converges. We set (by abuse of notation)
\beq
\rho_Y(f):=\lim\rho_Y(f_n)
\eeq
and note that for two sequences $(f_n)$ and $(f'_n)$ with $||f_n-f||_{C^0},||f'_n-f||_{C^0}\to0$ we can again by Corollary  \ref{cor:spectral_invariant_is_1_Lipschitz_in_f_Morse} estimate
\beq
\lim|\rho_Y(f'_n)-\rho_Y(f_n)|\leq\lim||f'_n-f_n||_{C^0}=0\;.
\eeq
Thus, the extension $\rho_Y$ is well-defined. A similar argument shows that the extension $\rho_Y$ is 1-Lipschitz continuous.

In order to show that $\rho_Y(f)$ is a critical value of $f\in C^1(M)$ we first note that if $f$ is in addition Morse then $\rho_Y(f)$ is a critical value by the very definition of $\rho_Y$. For the general case we point out that the space of Morse functions is in fact dense in the space of $C^1$-functions. Thus, for $f\in C^1(M)$ we can find a sequence $f_n$ of smooth Morse functions such that $||f_n-f||_{C^1}\to0$. In particular, also $||f_n-f||_{C^0}\to0$ and therefore $\rho_Y(f)=\lim\rho_Y(f_n)$. Thus, there exists $x_n\in\Crit(f_n)$ such that $\rho_Y(f_n)=f_n(x_n)$. Because $M$ is compact we can choose a convergent subsequence $x_{n_\nu}\to x\in M$. Since $||f_n-f||_{C^1}\to0$ we conclude that $df(x)=\lim df_{n_\nu}(x_{n_\nu})=0$. Thus, $x\in\Crit f$. Finally,
\beq
f(x)=\lim f_{n_\nu}(x_{n_\nu})=\lim\rho_Y(f_{n_\nu})=\rho_Y(f)\;.
\eeq
This proves the claim.
\end{proof}

The following Theorem explains the term spectral invariant.

\begin{Thm}
Let $\{f_r\}_{r\in[0,1]}$ be a continuous family of $C^1$ functions such that the spectrum $\S(f_r)\subset\R$ is independent of $r$ and nowhere dense. Then
\beq
\rho_Y(f_0)=\rho_Y(f_1)
\eeq
for all $Y\neq0\in\HM_*$.
\end{Thm}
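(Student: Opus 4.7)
The plan is to reduce the statement to a one-line topological fact: a continuous map from a connected set into a nowhere dense subset of $\R$ is constant. Concretely, I would define the function
\beq
\phi:[0,1]\pf\R,\qquad \phi(r):=\rho_Y(f_r),
\eeq
and argue that (i) $\phi$ is continuous, (ii) $\phi$ takes values in the common spectrum $\S:=\S(f_r)$, and (iii) $\S$ being nowhere dense forces $\phi$ to be constant.

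For (i) I would invoke the Lipschitz extension from Corollary \ref{cor:spectral_invariants_extend_to_C_0_Morse}: since $\rho_Y$ is $1$-Lipschitz on $C^0(M)$ with respect to $\|\cdot\|_{C^0}$, continuity of $r\mapsto f_r$ (in any topology at least as strong as $C^0$, which is guaranteed since a continuous family of $C^1$ functions is in particular $C^0$-continuous) gives
\beq
|\phi(r)-\phi(r')|\le \|f_r-f_{r'}\|_{C^0}\pf 0
\eeq
as $r'\to r$. For (ii) I would apply the last assertion of Corollary \ref{cor:spectral_invariants_extend_to_C_0_Morse}: because each $f_r$ is $C^1$, we have $\phi(r)=\rho_Y(f_r)\in\S(f_r)=\S$.

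For (iii), the image $\phi([0,1])$ is a connected subset of $\R$ since $\phi$ is continuous and $[0,1]$ is connected; hence it is an interval (possibly degenerate). If this interval had positive length it would have non-empty interior in $\R$, contradicting the assumption that $\S$ is nowhere dense. Therefore $\phi$ is constant, which gives $\rho_Y(f_0)=\rho_Y(f_1)$.

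The argument has no real obstacle: all the analytic content is already packaged in Corollary \ref{cor:spectral_invariants_extend_to_C_0_Morse}, and what remains is a soft topological observation. The only subtlety worth flagging is the interpretation of \emph{continuous family}; the argument only uses $C^0$-continuity of $r\mapsto f_r$, so it applies to any family which is continuous in a topology finer than $C^0$, in particular to $C^1$-continuous families. Once this is noted, the proof is immediate.
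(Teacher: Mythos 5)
Your proof is correct and is essentially identical to the paper's: both define $r\mapsto\rho_Y(f_r)$, invoke Corollary \ref{cor:spectral_invariants_extend_to_C_0_Morse} for continuity and for the fact that the values lie in the common nowhere dense spectrum, and conclude constancy. You merely spell out the final topological step (connected image in a nowhere dense set must be a single point) that the paper leaves implicit.
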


\begin{Rmk}
The assumption that $\S(f_r)$ is nowhere dense follows from Sard theorem if $f_r$ is sufficiently differentiable.
\end{Rmk}

\begin{proof}
We consider the function
\bea
{ }[0,1]&\pf\R\\
r&\mapsto \rho_Y(f_r)\;.
\eea 
By Corollary \ref{cor:spectral_invariants_extend_to_C_0_Morse} this map is continuous and by assumption takes values in a nowhere dense subset of $\R$. Thus, it's constant.
\end{proof}

\section{Spectral Invariants in Rabinowitz Floer homology}

\begin{Def}\label{def:spectral_invariants}
Let $\Mp\in\MP^{reg}(\Sigma)$. For $\xi=\sum_c\xi_c\,c\neq0\in\RFC_*(\Mp)$ we set
\beq
\A^\Mp(\xi):=\max\big\{\A^\Mp(c)\mid\xi_c\neq0\big\}
\eeq
and for $X\neq0\in\RFH_*(\Mp)$
\beq
\sigma_\Mp(X):=\inf\big\{\A^\Mp(\xi)\mid[\xi]=X\big\}\in\R\cup\{-\infty\}\;.
\eeq
We call $\sigma_\Mp(X)$ the spectral value of $X$.
\end{Def}

\begin{Rmk}
A priori the spectral value $\sigma_{\Mp}(X)$ depends on the almost complex structure $J$ used in the definition of the boundary operator in the Rabinowitz Floer complex. As in the warm-up (section \ref{sec:warmup}) it is easy to show that $\sigma_{\Mp}(X)$ is in fact independent of $J$.  
\end{Rmk}

\begin{Lemma}\label{lemma:finite_spectral_value_implies_critical_value}
Let $\Mp\in\MP^{reg}(\Sigma)$. If the spectral value satisfies $\sigma_\Mp(X)\in\R$ then it is a critical value:
\beq
\sigma_\Mp(X)\in\S(\A^\Mp):=\A^\Mp\big(\Crit(\A^\Mp)\big)\;.
\eeq
\end{Lemma}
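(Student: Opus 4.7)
The plan is to reduce the statement to a compactness property for critical points of $\A^\Mp$ with bounded action, and then invoke Lemma \ref{lem:crucial_lemma_for_eta_bound} to supply that compactness.

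First I would observe that, directly from Definition \ref{def:spectral_invariants}, the value $\A^\Mp(\xi)$ of any nonzero chain $\xi=\sum_c\xi_c\,c$ is realized as $\A^\Mp(c_0)$ for some generator $c_0$ with $\xi_{c_0}\neq 0$. In the Morse case each such $c_0$ is itself a critical point of $\A^\Mp$; in the Morse--Bott case (Remark \ref{rmk:perturb_by_Morse_fctn}) each generator lies on a connected critical submanifold on which $\A^\Mp$ is constant, and the constant equals the action of any critical point in that component. In either situation, $\A^\Mp(\xi)\in\S(\A^\Mp)$ for every nonzero chain $\xi$. Because $X\neq 0$, every representative cycle $\xi$ with $[\xi]=X$ is itself nonzero as a chain, so the infimum in the definition of $\sigma_\Mp(X)$ is an infimum of elements of $\S(\A^\Mp)$.

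Next I would pick a minimizing sequence of representatives $\xi_n$ with $a_n:=\A^\Mp(\xi_n)\searrow\sigma_\Mp(X)$, and for each $n$ select a critical point $c_n=(u_n,\eta_n)\in\Crit(\A^\Mp)$ with $\A^\Mp(c_n)=a_n$. Since $c_n$ is critical we have $\|\nabla\A^\Mp(c_n)\|=0<\delta/4$, and since the sequence $a_n$ is bounded (as $\sigma_\Mp(X)\in\R$), Lemma \ref{lem:crucial_lemma_for_eta_bound} yields a uniform bound on $|\eta_n|$. Combined with the critical point equation \eqref{eqn:critical_points_eqn}, the compact support of $H$, and the standard maximum-principle argument on the symplectization coordinate (which forces $u_n$ to stay in a bounded neighborhood of $\Sigma_f$ because the twisted SFT-like almost complex structure obstructs escape to infinity once $\eta$ is controlled), one obtains uniform $C^0$ bounds on $u_n$. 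Elliptic bootstrapping on the $J$-holomorphic type equation then upgrades this to $C^\infty$-compactness.

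Extracting a convergent subsequence $c_{n_k}\to c_\infty=(u_\infty,\eta_\infty)$, the limit is again a critical point of $\A^\Mp$ by closedness of $\Crit(\A^\Mp)$ in $C^\infty$, and continuity of the action gives $\A^\Mp(c_\infty)=\lim_k a_{n_k}=\sigma_\Mp(X)$. Hence $\sigma_\Mp(X)\in\S(\A^\Mp)$, as claimed.

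The main obstacle is the compactness step: the ambient $M$ is non-compact, so uniform bounds on critical points do not come for free. This is precisely where Lemma \ref{lem:crucial_lemma_for_eta_bound} is essential, because it supplies the $\eta$-bound that is needed to keep the loops $u_n$ in a compact region. All remaining ingredients (max principle on the Liouville coordinate, elliptic bootstrap, continuity of $\A^\Mp$) are standard in the Rabinowitz Floer setting developed in \cite{Albers_Frauenfelder_Leafwise_intersections_and_RFH, Cieliebak_Frauenfelder_Restrictions_to_displaceable_exact_contact_embeddings}.
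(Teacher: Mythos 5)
Your proposal is correct and follows essentially the same route as the paper: take a minimizing sequence of representatives, realize each value $\A^\Mp(\xi_n)$ at a critical point $c_n$, apply Lemma \ref{lem:crucial_lemma_for_eta_bound} (valid since $\|\nabla\A^\Mp(c_n)\|=0$) to bound the Lagrange multipliers, and conclude by Arzela--Ascoli and the critical point equation. The only cosmetic difference is that you invoke elliptic bootstrapping and a maximum principle, whereas for critical points equation \eqref{eqn:critical_points_eqn} is an ODE for a flow of a vector field supported near $\Sigma_f$ (once $\eta$ is bounded), so Arzela--Ascoli applies directly.
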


\begin{proof}
Let $\xi_n\in\RFC_*(\Mp)$ be a sequence such that $X=[\xi_n]$ and 
\beq
\lim_{n\to\infty}\A^\Mp(\xi_n)=\sigma_\Mp(X)\;.
\eeq
By definition there exist $c_n=(u_n,\eta_n)\in\Crit(\A^\Mp)$ with the property
\beq
\A^\Mp(c_n)=\A^\Mp(\xi_n)\;.
\eeq
From Lemma \ref{lem:crucial_lemma_for_eta_bound} we conclude that there exists a constant $C=C(H)$ such that
\beq
|\eta_n|\leq C(|\A^\Mp(c_n)|+1)
\eeq
and since $\lim_{n\to\infty}\A^\Mp(\xi_n)=\sigma_\Mp(X)$ the Lagrange multipliers $\eta_n$ are uniformly bounded. Thus, by Arzela-Ascoli and the critical point equation \eqref{eqn:critical_points_eqn} there exists a convergent subsequence $c_{n_k}\to c^*\in\Crit(\A^\Mp)$ satisfying
\beq
\A^\Mp(c^*)=\sigma_\Mp(X)\;.
\eeq
\end{proof}

The goal of this section is to compare the spectral invariants for different Moser pairs. This is established in Theorem \ref{thm:main_action_estimate}. The main idea is to estimate how the action develops along the continuation homomorphisms. 

For that let $\Mp_\pm=(F_{f_\pm},H_\pm)\in\MP^{reg}(\Sigma)$. We abbreviate $\Sigma_\pm:=\Sigma_{f_\pm}$ and choose a smooth monotone function $\theta:\R\pf[0,1]$ with $\theta(s)=0$ for $s\leq0$ and $\theta(s)=1$ for $s\geq1$ with $0\leq\theta'(s)\leq2$. We set
\beq
f_s:=\theta(s)(f_+-f_-)+f_-
\eeq
and
\beq\label{eqn:def_of_F_s}
F_s:=F_{f_s}\quad\text{and}\quad H_s:=\theta(s)(H_+-H_-)+H_-\;.
\eeq
For the definition of the function $F_f$ we refer to equation \eqref{eqn:def_of_F_f}. We consider the following family of Rabinowitz action functionals
\beq\label{eqn:Rabinowitz action functional}
\A_s(u,\eta):=-\int_0^1u^*\lambda-\int_0^1H_s(u(t),t)dt-\eta\int_0^1F_s(u(t),t)dt
\eeq
and set
\beq
\A_\pm(u,\eta):=-\int_0^1u^*\lambda-\int_0^1H_\pm(u(t),t)dt-\eta\int_0^1F_{f_\pm}(u(t),t)dt\;.
\eeq
The continuation homomorphism $\zeta_{\Mp_-}^{\Mp_+}:\RFH_*(\Mp_-)\pf\RFH_*(\Mp_+)$ is defined by counting solutions of
\beq\label{eqn:s_dep_gradient_flow_eqn}
\partial_s w(s)+ \nabla\A_s(w(s))=0\;
\eeq
that is, $w=(u,\eta)$ solves the problem
\beq\label{eqn:s_dep_gradient_flow_equation}\left.
\begin{aligned}
&\partial_su+J(u)\big(\partial_tu-X_{H_s}(u,t)-\eta X_{F_s}(u,t)\big)=0\\[1ex]
&\partial_s\eta-\int_0^1F_s(u,t)dt=0.
\end{aligned}
\;\;\right\}
\eeq

\begin{Prop}\label{prop:main_action_estimate}
Let $w$ be a solution of $\eqref{eqn:s_dep_gradient_flow_eqn}$ with $\lim_{s\to\pm\infty}=w_\pm\in\Crit\A_\pm$. If
\beq
||f_+-f_-||_{C^0(\Sigma)}\leq\frac{\delta(2-\delta)}{128-56\delta}
\eeq
then 
\beq
\A_+(w_+)\leq\max\Big\{\Big(1+\frac{8\Delta_1}{2-\delta}\Big)\A_-(w_-),0\Big\}+\Delta_0+2\Delta_1\bigg(\frac{64-28\delta}{\delta(2-\delta)}\Delta_0+\frac{(\delta+4\Delta_2)}{2-\delta}\bigg)
\eeq
where we use the abbreviations
\beq
\Delta_0:=\int_0^1||H_-(\cdot,t)-H_+(\cdot,t)||_{C^0(M)}dt\;,\quad\Delta_1:=||f_+-f_-||_{C^0(\Sigma)},\quad\Delta_2:=\max\{\kappa(H_+),\kappa(H_-)\}\;.
\eeq
\end{Prop}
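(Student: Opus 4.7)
The plan is to track the function $a(s):=\A_s(w(s))$ along the continuation trajectory and conclude $\A_+(w_+)\leq a(1)$, which follows because $\theta'\equiv 0$ outside $[0,1]$, so $a'(s) = -\|\nabla\A_s(w(s))\|^2 \leq 0$ there. Differentiating gives
\beq
a'(s) \;=\; -\|\nabla\A_s(w(s))\|^2 + (\partial_s\A_s)(w(s)),
\eeq
and a direct calculation from the formulas for $H_s$ and $F_{f_s}$, using $|\beta'|\leq 2$ (eq.~\eqref{eqn:derivative_of_beta_leq_2}) together with $\int_0^1\rho(t)dt=1$, yields
\beq
\bigl|(\partial_s\A_s)(u,\eta)\bigr| \;\leq\; \theta'(s)\bigl[\Delta_0 + 2\Delta_1|\eta|\bigr].
\eeq
Dropping the non-positive gradient term and integrating over $\R$, combined with the monotonicity estimate $a(0)\leq\A_-(w_-)$ on $(-\infty,0]$, already yields a baseline inequality
\beq
\A_+(w_+) \;\leq\; a(1) \;\leq\; \A_-(w_-) + \Delta_0 + 2\Delta_1\int_0^1\theta'(s)|\eta(s)|\,ds.
\eeq

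The crux is now to bound $|\eta(s)|$ pointwise on $[0,1]$. I would apply Lemma \ref{lem:crucial_lemma_for_eta_bound} to the loop $(u(s,\cdot),\eta(s))$ at the Moser pair $(F_{f_s},H_s)$. Since $X_{H_s}=\theta(s)X_{H_+}+(1-\theta(s))X_{H_-}$ and $H_s$ itself is the matching convex combination, the semi-norm $\kappa$ is convex and $\kappa(H_s)\leq\max(\kappa(H_\pm))=\Delta_2$. Whenever $\|\nabla\A_s(w(s))\|<\delta/4$ the lemma therefore gives
\beq
|\eta(s)| \;\leq\; \frac{2}{2-\delta}\bigl(|a(s)| + \tfrac{\delta}{4} + \Delta_2\bigr).
\eeq
In the complementary bad regime $\|\nabla\A_s(w(s))\|\geq \delta/4$, I would absorb the term $2\theta'(s)\Delta_1|\eta(s)|$ into the available $-\|\nabla\A_s\|^2$ via a Cauchy--Schwarz/Young inequality; because $\|\nabla\A_s\|^{-2}\leq 16/\delta^2$ on this set, this produces an extra contribution of order $\Delta_0\Delta_1/\delta$, which is precisely what gives rise to the numerical factor $(64-28\delta)/(\delta(2-\delta))$ in the statement.

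Feeding the good-regime bound back into the differential inequality for $a$ produces a Gronwall-type integral inequality of the schematic shape
\beq
a(s) \;\leq\; \A_-(w_-) + \nu + \mu\int_0^s\theta'(s')\,a(s')\,ds',
\eeq
with $\mu:=4\Delta_1/(2-\delta)$ and $\nu$ gathering the $\Delta_0$-, $\Delta_2$-, and bad-regime contributions. Since $\int_0^1\theta'(s)\,ds=1$, Gronwall contributes an exponential factor $e^\mu$ multiplying $\A_-(w_-)$. The smallness hypothesis $\|f_+-f_-\|_{C^0}\leq\delta(2-\delta)/(128-56\delta)$ is calibrated exactly so that $\mu$ is small enough for the linearization $e^\mu\leq 1+2\mu = 1+8\Delta_1/(2-\delta)$, and so that the bad-regime correction fits inside the $\Delta_0$-term. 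The $\max\{\cdot,0\}$ on the right-hand side of the statement arises because the Gronwall amplification is only useful when $\A_-(w_-)\geq 0$; otherwise one simply discards it and relies on the baseline inequality from the first paragraph.

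The main obstacle I expect is the coupled sign analysis. The Lemma furnishes $|\eta(s)|$ in terms of $|a(s)|$ rather than $a(s)$, and the Gronwall contraction wants a sign-definite right-hand side; at the same time the bad-regime bound and the good-regime bound must be patched together without double-counting. Separating the four cases according to the signs of $a(s)$ and $\|\nabla\A_s\|-\delta/4$, and showing that the constants assemble into exactly the numerical expressions $1+8\Delta_1/(2-\delta)$ and $(64-28\delta)/(\delta(2-\delta))$ rather than mere order-of-magnitude bounds, is the delicate bookkeeping step that the precise choice of smallness constant is engineered to accommodate.
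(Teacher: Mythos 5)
Your baseline computation of $a(s)=\A_s(w(s))$ and the bound $\bigl|(\partial_s\A_s)(u,\eta)\bigr|\le\theta'(s)\bigl[\Delta_0+2\Delta_1|\eta|\bigr]$ match the paper, and invoking Lemma \ref{lem:crucial_lemma_for_eta_bound} together with convexity of $\kappa$ in the regime $\|\nabla\A_s(w(s))\|<\delta/4$ is also the right move. The genuine gap is your treatment of the complementary regime. When $\|\nabla\A_s(w(s))\|\ge\delta/4$ the Lemma says nothing, and the proposed absorption of $2\theta'(s)\Delta_1|\eta(s)|$ into $-\|\nabla\A_s(w(s))\|^2$ via Young/Cauchy--Schwarz cannot work: $\eta(s)$ and $\|\nabla\A_s(w(s))\|$ are independent quantities, so the trajectory may have gradient norm exactly $\delta/4$ while $|\eta(s)|$ is a priori arbitrarily large, and no inequality of the form $|\eta(s)|\le C\|\nabla\A_s(w(s))\|^2+(\text{controlled})$ is available. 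Your remark that $\|\nabla\A_s\|^{-2}\le 16/\delta^2$ on the bad set does not produce a bound on $|\eta(s)|$ there.

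The paper closes exactly this hole by a different mechanism: for each $\sigma$ it sets $\tau(\sigma):=\inf\{\tau\ge0\mid\|\nabla\A_{\sigma+\tau}(w(\sigma+\tau))\|\le\delta/4\}$, observes that the energy identity forces $\tau(\sigma)\le\frac{16}{\delta^2}E(w)$, and uses $|\partial_s\eta|=\bigl|\int_0^1F_s(u,t)\,dt\bigr|\le\delta$ to transport the Lemma's bound from the good time $\sigma+\tau(\sigma)$ back to $\sigma$ at cost $\tau(\sigma)\delta\le\frac{16}{\delta}E(w)$. Since $E(w)$ is itself controlled by $\A_-(w_-)-\A_+(w_+)$ plus terms involving $\|\eta\|_{C^0(\R)}$, this yields a self-referential linear inequality for $\|\eta\|_{C^0(\R)}$, which the smallness hypothesis $\Delta_1\le\frac{\delta(2-\delta)}{128-56\delta}$ is calibrated to solve (it makes the coefficient of $\|\eta\|_{C^0(\R)}$ on the right at most $\tfrac12$); this is the actual origin of the factor $\frac{64-28\delta}{\delta(2-\delta)}$. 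A second, smaller discrepancy: the paper derives a single uniform bound on $\|\eta\|_{C^0(\R)}$ and substitutes it once into the baseline inequality with a sign case analysis; no Gronwall argument occurs inside this Proposition. The exponential $e^{16\Delta_1/(2-\delta)}$ you anticipate appears only in Theorem \ref{thm:main_action_estimate}, where the homotopy is cut into $N$ short pieces, the Proposition is iterated via Lemma \ref{lemma:iteration_formula_estimate}, and $N\to\infty$. Your continuous Gronwall could plausibly be made to work once the $\eta$-bound is repaired, but as written the proof does not go through without the $\tau(\sigma)$ device or an equivalent control of $\eta$ on the bad set.
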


\begin{proof}
Since by definition
\beq
F_s(y,t)=
\begin{cases}
\rho(t)\beta\big(r-f_s(x)\big)&\text{for }y=(x,r)\in\Sigma\times\R\\
-\delta\,\rho(t)&\text{for }y\in M\setminus\big(\Sigma\times\R\big)
\end{cases}
\eeq
we have
\beq
F'_s(y,t):=\frac{d}{ds}F_s(y,t)=
\begin{cases}
-\rho(t)\beta'\big(r-f_s(x)\big)\theta'(s)\big(f_1(x)-f_0(x)\big)&\text{for }y=(x,r)\in\Sigma\times\R\\
0&\text{for }y\in M\setminus\big(\Sigma\times\R\big)
\end{cases}
\eeq
and thus
\beq
|F'_s(y,t)|\leq2\rho(t)\theta'(s)||f_+-f_-||_{C^0(\Sigma)}
\eeq
using that $0\leq\beta'\leq2$. For $v=(u,\eta)\in\L_M\times\R$ we compute
\beq
\A'_s(v):=\frac{\p\A_s}{\p s}(v)=-\int_0^1H'_s(u(t),t)dt-\eta\int_0^1F'_s(u(t),t)dt\;.
\eeq
We set
\beq
0\leq E^\sigma(w):=\int_{-\infty}^\sigma||\p_sw(s)||^2ds\;,\qquad 0\leq E_\sigma(w):=\int^{\infty}_\sigma||\p_sw(s)||^2ds\;.
\eeq
We recall that $w$ solves \eqref{eqn:s_dep_gradient_flow_eqn} and estimate
\bea
\A_\sigma(w(\sigma))&=\A_-(w_-)+\int_{-\infty}^\sigma\frac{d}{ds}\A_s(w(s))ds\\
&=\A_-(w_-)+\int_{-\infty}^\sigma\A'_s(w(s))+d\A_s(w(s))[\p_sw(s)]ds\\
&=\A_-(w_-)+\int_{-\infty}^\sigma\A'_s(w(s))+\langle\nabla\A_s(w(s)),\p_sw(s)\rangle ds\\
&=\A_-(w_-)+\int_{-\infty}^\sigma\A'_s(w(s))+\langle-\p_sw(s),\p_sw(s)\rangle ds\\
&=\A_-(w_-)+\int_{-\infty}^\sigma\A'_s(w(s))ds-E^\sigma(w)\\
&=\A_-(w_-)-E^\sigma(w)-\int_{-\infty}^\sigma\int_0^1 \left( H'_s(u(t),t)+\eta(s) F'_s(u(t),t)\right)dtds\\[2ex]
&\leq\A_-(w_-)-E^\sigma(w)\\
&\quad+\int_{-\infty}^\sigma\int_0^1\left(-\theta'(s)\min_M\{H_+(\cdot,t)-H_-(\cdot,t)\}+2\eta(s)\rho(t)\theta'(s)||f_+-f_-||_{C^0(\Sigma)}\right)dtds\\[2ex]
&=\A_-(w_-)-E^\sigma(w)\\
&\quad+\int_{-\infty}^\sigma\theta'(s)\int_0^1\left(-\min_M\{H_+(\cdot,t)-H_-(\cdot,t)\}+2\eta(s)\rho(t)||f_+-f_-||_{C^0(\Sigma)}\right)dtds\\[2ex]
&=\A_-(w_-)-E^\sigma(w)+\int_{-\infty}^\sigma\theta'(s)\big(||H_+-H_-||_-+2\eta(s)||f_+-f_-||_{C^0(\Sigma)}\big)ds\\[2ex]
&\leq\A_-(w_-)-E^\sigma(w)+||H_+-H_-||_-+2||\eta||_{C^0(\R)}||f_+-f_-||_{C^0(\Sigma)}
\eea
where
\beqn
||H_+-H_-||_+=\int_0^1\max_M\{(H_+-H_-)(\cdot,t)\}dt,\;||H_+-H_-||_-=-\int_0^1\min_M\{(H_+-H_-)(\cdot,t)\}dt
\eeq
and similarly it holds
\beq
\A_\sigma(w(\sigma))\geq\A_+(w_+)+E_\sigma(w)-||H_+-H_-||_+-2||\eta||_{C^0(\R)}||f_+-f_-||_{C^0(\Sigma)}\;.
\eeq

In particular, we have
\bea\label{est:absolute_value_of_action_fctl}
|\A_\sigma(w(\sigma))|&\leq\max\{\A_-(w_-),-\A_+(w_+)\}\\
&\quad+\int_0^1||H_-(\cdot,t)-H_+(\cdot,t)||_{C^0(M)}dt+2||\eta||_{C^0(\R)}||f_+-f_-||_{C^0(\Sigma)}\;.
\eea
Moreover, we obtain for $\sigma=+\infty$
\beq\label{eqn:inequality_1}
\A_+(w_+)\leq\A_-(w_-)-E(w)+||H_+-H_-||_-+2||\eta||_{C^0(\R)}||f_+-f_-||_{C^0(\Sigma)}\;.
\eeq
For $\sigma\in\R$ we define
\beq
\tau(\sigma):=\inf\Big\{\tau\geq0\mid||\nabla\A_{\sigma+\tau}(w(\sigma+\tau))||\leq\frac\delta4\Big\}
\eeq
where $\delta$ is as in Lemma \ref{lem:crucial_lemma_for_eta_bound}. Then we compute for the energy
\beq\label{est:tau}
E(w)=\int_{-\infty}^\infty||\p_sw(s)||^2ds\geq\int_{\sigma}^{\sigma+\tau(\sigma)}\underbrace{||\nabla\A_s(w(s))||^2}_{\geq\frac{\delta^2}{16}}ds\geq\tau(\sigma)\frac{\delta^2}{16}\;.
\eeq
Combining the estimates \eqref{eqn:inequality_1} and \eqref{est:tau} we obtain
\bea
\tau(\sigma)&\leq\frac{16}{\delta^2}E(w)\leq\frac{16}{\delta^2}\Big(\A_-(w_-)-\A_+(w+)+||H_+-H_-||_-+2||\eta||_{C^0(\R)}||f_+-f_-||_{C^0(\Sigma)}\Big)\;.
\eea
From the definition of $F_s$ (see equation \eqref{eqn:def_of_F_s}) it follows
\beq
\left|\int_0^1F_s(t,u(t))dt\right|\leq\delta\int_0^1\rho(t)dt=\delta\;.
\eeq
From these estimates and the gradient flow equation \eqref{eqn:s_dep_gradient_flow_equation}
\beq
\p_s\eta(s)=\int_0^1F_s(t,u(t))dt
\eeq
we obtain
\bea
|\eta(\sigma)|&\leq|\eta(\sigma+\tau(\sigma))|+\int_\sigma^{\tau(\sigma)+\sigma}|\p_s\eta(s)|ds\\
&=|\eta(\sigma+\tau(\sigma))|+\int_\sigma^{\tau(\sigma)+\sigma}\left|\int_0^1F_s(t,u(t))dt\right|ds\\
&\leq|\eta(\sigma+\tau(\sigma))|+\tau(\sigma)\delta\\
&\leq|\eta(\sigma+\tau(\sigma))|\\
&\quad+\frac{16}{\delta}\Big(\A_-(w_-)-\A_+(w+)+||H_+-H_-||_-+2||\eta||_{C^0(\R)}||f_+-f_-||_{C^0(\Sigma)}\Big)
\eea
Using Lemma \ref{lem:crucial_lemma_for_eta_bound}, the definition of $\tau(\sigma)$, and estimate \eqref{est:absolute_value_of_action_fctl} we get
\bea
|\eta(\sigma+\tau(\sigma))|&\leq \frac{2}{2-\delta}\Big(|\A_\sigma(w(\sigma))|+\delta/4+\max\{\kappa(H_+),\kappa(H_-)\}\Big)\\
&\leq \frac{2}{2-\delta}\bigg(\max\{\A_-(w_-),-\A_+(w_+)\}+\int_0^1||H_-(\cdot,t)-H_+(\cdot,t)||_{C^0(M)}dt\\
&\qquad\qquad\qquad+2||\eta||_{C^0(\R)}||f_+-f_-||_{C^0(\Sigma)}+\delta/4+\kappa(H)\bigg)
\eea
where we used that $\kappa(H)$ is a semi-norm, in particular,
\bea
\kappa(H_s)&=\kappa\big(\theta(s)H_++(1-\theta(s))H_-\big)\\
&\leq\theta(s)\kappa\big(H_+\big)+(1-\theta(s))\kappa\big(H_-\big)\\
&\leq\max\{\kappa(H_+),\kappa(H_-)\}\;.
\eea
We recall the abbreviation
\beq
\Delta_2=\max\{\kappa(H_+),\kappa(H_-)\}\;.
\eeq
Combining the previous two inequalities we obtain
\bea\label{eqn:estimate1}
|\eta(\sigma)|&\leq \frac{2}{2-\delta}\bigg(\max\{\A_-(w_-),-\A_+(w_+)\}+\int_0^1||H_-(\cdot,t)-H_+(\cdot,t)||_{C^0(M)}dt\\
&\qquad\qquad\qquad+2||\eta||_{C^0(\R)}||f_+-f_-||_{C^0(\Sigma)}+\delta/4+\Delta_2\bigg)\\
&\qquad+\frac{16}{\delta}\Big(\A_-(w_-)-\A_+(w_+)+||H_+-H_-||_-+2||\eta||_{C^0(\R)}||f_+-f_-||_{C^0(\Sigma)}\Big)\\[3ex]
&\leq\frac{32-14\delta}{\delta(2-\delta)}\Big(\int_0^1||H_-(\cdot,t)-H_+(\cdot,t)||_{C^0(M)}dt+2||\eta||_{C^0(\R)}||f_+-f_-||_{C^0(\Sigma)}\Big)\\
&\quad+\frac{2}{2-\delta}\max\{\A_-(w_-),-\A_+(w_+)\}+\frac{16}{\delta}\big(\A_-(w_-)-\A_+(w_+)\big)\\
&\quad+\frac{(\delta+4\Delta_2)}{4-2\delta}\;.
\eea
We recall the abbreviation
\beq
\Delta_0=\int_0^1||H_-(\cdot,t)-H_+(\cdot,t)||_{C^0(M)}dt\;.
\eeq
Since the right hand side of \eqref{eqn:estimate1} is independent of $\sigma$ we conclude that
\bea
||\eta||_{C^0(\R)}&\leq\frac{32-14\delta}{\delta(2-\delta)}\Big(\Delta_0+2||\eta||_{C^0(\R)}||f_+-f_-||_{C^0(\Sigma)}\Big) +\frac{(\delta+4\Delta_2)}{4-2\delta}\\
&\quad+\frac{2}{2-\delta}\max\{\A_-(w_-),-\A_+(w_+)\}+\frac{16}{\delta}\big(\A_-(w_-)-\A_+(w_+)\big)\\
\eea
thus
\bea
\Big(1-\frac{64-28\delta}{\delta(2-\delta)}||f_+-f_-||_{C^0(\Sigma)}\Big)||\eta||_{C^0(\R)}&\leq\frac{32-14\delta}{\delta(2-\delta)}\Delta_0+\frac{(\delta+4\Delta_2)}{4-2\delta}\\
&\quad+\frac{2}{2-\delta}\max\{\A_-(w_-),-\A_+(w_+)\}\\
&\quad+\frac{16}{\delta}\big(\A_-(w_-)-\A_+(w_+)\big)\;.
\eea
Now we recall our assumption
\beq
||f_+-f_-||_{C^0(\Sigma)}\leq\frac{\delta(2-\delta)}{128-56\delta}
\eeq
and therefore
\bea\label{eqn:inequality_2}
||\eta||_{C^0(\R)}&\leq\frac{64-28\delta}{\delta(2-\delta)}\Delta_0+\frac{(\delta+4\Delta_2)}{2-\delta}\\
&\quad+\frac{4}{2-\delta}\max\{\A_-(w_-),-\A_+(w_+)\}+\frac{32}{\delta}\big(\A_-(w_-)-\A_+(w_+)\big)\;.\\
\eea
Using the abbreviation
\beq
\Delta_1=||f_+-f_-||_{C^0(\Sigma)}
\eeq
and combining the inequalities \eqref{eqn:inequality_1} and \eqref{eqn:inequality_2} we obtain
\bea
\A_+(w_+)&\leq\A_-(w_-)+\Delta_0+2 \Delta_1\bigg(\frac{64-28\delta}{\delta(2-\delta)}\Delta_0+\frac{(\delta+4\Delta_2)}{2-\delta}\\
&\quad+\frac{4}{2-\delta}\max\{\A_-(w_-),-\A_+(w_+)\}+\frac{32}{\delta}\big(\A_-(w_-)-\A_+(w_+)\big)\bigg)\;.
\eea
In the case $\A_+(w_+)\leq\A_-(w_-)$ or  $0\geq\A_+(w_+)$ the assertion of the Proposition to be proved follows trivially. Therefore, from now on we assume that $\A_+(w_+)\geq\A_-(w_-)$ and $\A_+(w_+)\geq0$. Then we can simplify the above estimate to
\bea
\A_+(w_+)&\leq\A_-(w_-)+\Delta_0\\
&\quad+2\Delta_1\bigg(\frac{64-28\delta}{\delta(2-\delta)}\Delta_0+\frac{(\delta+4\Delta_2)}{2-\delta}+\frac{4}{2-\delta}\max\{\A_-(w_-),-\A_+(w_+)\}\bigg)\;.
\eea
Next we distinguish two cases. If $\A_+(w_+)\geq\A_-(w_-)\geq0$ then
\bea
\A_+(w_+)&\leq\A_-(w_-)+\Delta_0+2\Delta_1\bigg(\frac{64-28\delta}{\delta(2-\delta)}\Delta_0+\frac{(\delta+4\Delta_2)}{2-\delta}+\frac{4}{2-\delta}\A_-(w_-)\bigg)\\
&=\A_-(w_-)\Big(1+\frac{8\Delta_1}{2-\delta}\Big)+\Delta_0+2\Delta_1\bigg(\frac{64-28\delta}{\delta(2-\delta)}\Delta_0+\frac{(\delta+4\Delta_2)}{2-\delta}\bigg)\;.
\eea
If $\A_+(w_+)\geq0\geq\A_-(w_-)$ then
\bea
\A_+(w_+)&\leq\Delta_0+2\Delta_1\bigg(\frac{64-28\delta}{\delta(2-\delta)}\Delta_0+\frac{(\delta+4\Delta_2)}{2-\delta}\bigg)\;.
\eea
The Proposition follows from the last two inequalities.
\end{proof}

\begin{Thm}\label{thm:main_action_estimate}
Let $\Mp_\pm=(F_{f_\pm},H_\pm)\in\MP^{reg}(\Sigma)$. We abbreviate $\Sigma_\pm:=\Sigma_{f_\pm}$. Then
\beq\label{eqn:crucial_inequ_spectral_intro}
\sigma_{\Mp_-}(X_-)\leq e^{\frac{16\Delta_1}{2-\delta}}\max \big\{\sigma_{\Mp_-}(X_-),0\big\}+\Big(\frac{(2-\delta)\Delta_0}{\Delta_1}+2(\delta+4 \Delta_2)\Big)\Big(e^{\frac{16\Delta_1}{2-\delta}}-1\Big)
\eeq 
where $X_+=\zeta_{\Mp_-}^{\Mp_+}(X_-)\neq0\in\RFH_*(\Mp_+)$ and $\Delta_0$, $\Delta_1$, and $\Delta_2$ are as in Proposition \ref{prop:main_action_estimate}.
\end{Thm}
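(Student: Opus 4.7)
The plan is to use Proposition~\ref{prop:main_action_estimate} as a one-step estimate, subdivide the straight-line interpolation from $\Mp_-$ to $\Mp_+$ into $N$ small pieces each satisfying its smallness hypothesis, and then let $N\to\infty$ so that the resulting discrete Gronwall-type recursion of the form $s_{i+1}\leq (1+a/N)s_i+b/N$ produces the exponential factor in the statement.

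First I would convert the critical-point estimate of Proposition~\ref{prop:main_action_estimate} into a one-step \emph{spectral} estimate, mimicking the Morse warm-up argument of Corollary~\ref{cor:action_estimate_continuation_Morse}. Under the smallness hypothesis, the proposition reads
\beq\label{eqn:proofplan1}
\max\{\A_+(w_+),0\}\leq \left(1+\frac{8\Delta_1}{2-\delta}\right)\max\{\A_-(w_-),0\}+C(\Delta_0,\Delta_1,\Delta_2,\delta),
\eeq
for every continuation trajectory $w$ connecting critical points. Given a chain $\xi\in\RFC_*(\Mp_-)$ representing $X_-$, the chain-level continuation map $Z$ produces a representative $Z\xi$ of $X_+=\zeta_{\Mp_-}^{\Mp_+}(X_-)$ whose generators arise from generators of $\xi$ via such trajectories. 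Taking the maximum over output generators and then the infimum over input chains, while using monotonicity of the right-hand side of~\eqref{eqn:proofplan1} in $\A_-(w_-)$, yields
\beq\label{eqn:proofplan2}
\max\{\sigma_{\Mp_+}(X_+),0\}\leq \left(1+\frac{8\Delta_1}{2-\delta}\right)\max\{\sigma_{\Mp_-}(X_-),0\}+C.
\eeq

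Next I would interpolate: for $N$ large, set $f^{(i)}:=f_-+(i/N)(f_+-f_-)$, $H^{(i)}:=H_-+(i/N)(H_+-H_-)$, and $\Mp_i:=(F_{f^{(i)}},H^{(i)})$. A small perturbation, permitted by Proposition~\ref{prop:generic_is_regular} and Remark~\ref{rmk:perturb_by_Morse_fctn}, renders each $\Mp_i$ weakly regular without disturbing the bound in the limit $N\to\infty$. For $N$ large every transition $\Mp_{i-1}\to\Mp_i$ satisfies the smallness hypothesis of Proposition~\ref{prop:main_action_estimate} with $\Delta_1^{(i)}=\Delta_1/N$, $\Delta_0^{(i)}=\Delta_0/N$, and $\Delta_2^{(i)}\leq\Delta_2$; by functoriality of the continuation homomorphisms (Definition~\ref{def:zeta_RFH}) the classes $X_i:=\zeta_{\Mp_{i-1}}^{\Mp_i}(X_{i-1})$ compose to $X_+$, so the one-step spectral estimate~\eqref{eqn:proofplan2} applies at every step.

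Iterating $N$ times with $s_i:=\max\{\sigma_{\Mp_i}(X_i),0\}$ gives
\beq
s_N\leq \left(1+\frac{a}{N}\right)^N s_0+C_N\sum_{i=0}^{N-1}\left(1+\frac{a}{N}\right)^i,
\eeq
where $C_N=O(1/N)$ and its leading part $NC_N$ collects precisely those contributions to $C$ in~\eqref{eqn:proofplan1} that are linear in the small quantities $\Delta_0/N$ and $\Delta_1/N$ (the mixed $\Delta_0\Delta_1/N^2$ term drops out in the limit). Passing to $N\to\infty$, the prefactor converges to $e^a$ and the geometric sum to $\tfrac{e^a-1}{a}\cdot\lim NC_N$, producing exactly the shape $e^{a}\max\{\sigma_{\Mp_-}(X_-),0\}+(e^{a}-1)\cdot(\text{constant})$ of~\eqref{eqn:crucial_inequ_spectral_intro}. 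The main technical hurdle I anticipate is the careful bookkeeping of constants: one must verify that the effective exponent coming from the $\|\eta\|_{C^0}$ feedback built into the proof of Proposition~\ref{prop:main_action_estimate} produces the stated $16\Delta_1/(2-\delta)$ rather than $8\Delta_1/(2-\delta)$, and that the surviving coefficients of $\Delta_0$ and $\delta+4\Delta_2$ in $\lim NC_N$ match those multiplying $(e^{a}-1)$ in the right-hand side of~\eqref{eqn:crucial_inequ_spectral_intro}.
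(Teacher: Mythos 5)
Your proposal follows essentially the same route as the paper: a one-step spectral estimate deduced from Proposition \ref{prop:main_action_estimate} as in the Morse warm-up (Corollary \ref{cor:action_estimate_continuation_Morse}), subdivision of the interpolation into $N$ admissible pieces with small regularizing perturbations, functoriality of the continuation maps, the discrete Gronwall iteration (which the paper isolates as Lemma \ref{lemma:iteration_formula_estimate} in the appendix), and the limit $N\to\infty$. The factor-of-two worry you flag resolves harmlessly: the paper samples the cutoff homotopy $f_s=\theta(s)(f_+-f_-)+f_-$ with $0\le\theta'\le2$, so its step sizes are only bounded by $\tfrac{2}{N}\Delta_1$ and it lands on the exponent $\tfrac{16\Delta_1}{2-\delta}$, whereas your linear subdivision gives steps of exactly $\tfrac{1}{N}\Delta_1$ and hence the sharper exponent $\tfrac{8\Delta_1}{2-\delta}$, which still implies the stated bound.
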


\begin{proof}
Under the assumption 
\beq
||f_+-f_-||_{C^0(\Sigma)}\leq\frac{\delta(2-\delta)}{128-56\delta}
\eeq
Proposition \ref{prop:main_action_estimate} implies as in Corollary \ref{cor:action_estimate_continuation_Morse}  that
\beq
\sigma_{\Mp_+}(X_+)\leq\max\Big\{\Big(1+\frac{8\Delta_1}{2-\delta}\Big)\sigma_{\Mp_-}(X_-),0\Big\}+\Delta_0+2\Delta_1\bigg(\frac{64-28\delta}{\delta(2-\delta)}\Delta_0+\frac{(\delta+4\Delta_2)}{2-\delta}\bigg)\;.
\eeq
where $X_1=\zeta_{\Mp_0}^{\Mp_1}(X_0)\neq0\in\RFH_*(\Mp_1)$. In general this assumption is not satisfied. But we can always split the homotopy from $f_-$ to $f_+$ into many small homotopies each of which satisfies the above inequality. To obtain the statement of the theorem we eventually take an adiabatic limit. We again define
\beq
f_s:=\theta(s)(f_+-f_-)+f_-\,,\quad s\in\R
\eeq
and
\beq
F_s:=F_{f_s}\quad\text{and}\quad H_s:=\theta(s)(H_+-H_-)+H_-\;.
\eeq
where $\theta$ is the cut-off function defined above Proposition \ref{prop:main_action_estimate}. We choose $N\in\N$ such that
\beq
N\geq\frac{256-112\delta}{\delta(2-\delta)}||f_+-f_-||_{C^0(\Sigma)}
\eeq
and set for $k=0,\ldots,N$
\beq
f^k:=f_{\frac{k}{N}}\,,\qquad H^k:=H_{\frac{k}{N}},\quad\text{and}\quad\Mp^k:=(F_{f^k},H^k)\;.
\eeq
For convenience we proceed with the proof under the assumption that $\Mp^k$ is a regular Moser pair. Otherwise, in the following arguments $\Mp^k$ has to be replaced by an arbitrarily small regular perturbation. By taking the limit this does not influence  the action estimates. We recall that $0\leq\theta'\leq2$ and observe that by the choice of $N$  
\bea
||f^{k+1}-f^k||_{C^0(\Sigma)}&=||\Big(\theta(\tfrac{k+1}{N})-\theta(\tfrac{k}{N})\Big)(f_+-f_-)||_{C^0(\Sigma)}\\
&\leq 2\big(\tfrac{k+1}{N}-\tfrac{k}{N}\big)||f_+-f_-||_{C^0(\Sigma)}\\
&\leq \tfrac{2}{N}||f_+-f_-||_{C^0(\Sigma)}\\
&\leq\frac{\delta(2-\delta)}{128-56\delta}\;.
\eea
In particular,
\beq
\Delta_1^k:=||f^{k+1}-f^k||_{C^0(\Sigma)}\leq\tfrac{2}{N}\Delta_1\;.
\eeq
Similarly,
\beq
||H_{\frac{k+1}{N}}(\cdot,t)-H_{\frac{k}{N}}(\cdot,t)||_{C^0(M)}\leq\tfrac{2}{N}||H_{+}(\cdot,t)-H_{-}(\cdot,t)||_{C^0(M)}
\eeq
and therefore
\beq
\Delta_0^k:=\int_0^1||H_{\frac{k+1}{N}}(\cdot,t)-H_{\frac{k}{N}}(\cdot,t)||_{C^0(M)}dt\leq\tfrac{2}{N}\Delta_0\;.
\eeq
Finally, since $\kappa$ is a semi-norm
\beq
\kappa(H^k)\leq\max\{\kappa(H_+),\kappa(H_-)\}=\Delta_2\;.
\eeq
Thus, we conclude from Proposition \ref{prop:main_action_estimate} as explained at the beginning of the proof
\bea
\sigma_{\Mp^{k+1}}(X^{k+1})&\leq\max\Big\{\Big(1+\frac{8\Delta_1^k}{2-\delta}\Big)\sigma_{\Mp^{k}}(X^{k}),0\Big\}+\Delta_0^k+2\Delta_1^k\bigg(\frac{64-28\delta}{\delta(2-\delta)}\Delta_0^k+\frac{(\delta+4 \Delta_2)}{2-\delta}\bigg)\\
&\leq\max\Big\{\Big(1+\frac{16\Delta_1}{2-\delta}\frac{1}{N}\Big)\sigma_{\Mp^{k}}(X^{k}),0\Big\}+{\frac{2}{N}\Delta_0}+{\frac{4}{N}\Delta_1}\bigg(\frac{64-28\delta}{\delta(2-\delta)}{\frac{2}{N}\Delta_0}+\frac{(\delta+4 \Delta_2)}{2-\delta}\bigg)\\
\eea
where $X^{k+1}=\zeta_{\Mp^{k}}^{\Mp^{k+1}}(X^{k})=\zeta_{\Mp^k}^{\Mp^{k+1}}(X_-)$. Lemma \ref{lemma:iteration_formula_estimate} implies that
\bean
\sigma_{\Mp_+}(X_+)&\leq \Big(1+\frac{16\Delta_1}{2-\delta}\frac{1}{N}\Big)^N\max \bigg\{\sigma_{\Mp_-}(X_-),{\frac{2}{N}\Delta_0}+{\frac{4}{N}\Delta_1}\bigg(\frac{64-28\delta}{\delta(2-\delta)}{\frac{2}{N}\Delta_0}+\frac{(\delta+4 \Delta_2)}{2-\delta} \bigg)\bigg\}\\
&\quad+\bigg({\frac{2}{N}\Delta_0}+{\frac{4}{N}\Delta_1} \bigg(\frac{64-28\delta}{\delta(2-\delta)}{\frac{2}{N}\Delta_0}+\frac{(\delta+4 \Delta_2)}{2-\delta} \bigg)\bigg)\frac{\Big(1+\frac{16\Delta_1}{2-\delta}\frac{1}{N}\Big)^N-1}{\Big(1+\frac{16\Delta_1}{2-\delta}\frac{1}{N}\Big)-1}\\
&= \Big(1+\frac{16\Delta_1}{2-\delta}\frac{1}{N}\Big)^N\max \bigg\{\sigma_{\Mp_-}(X_-),{\frac{2}{N}\Delta_0}+{\frac{4}{N}\Delta_1}\bigg(\frac{64-28\delta}{\delta(2-\delta)}{\frac{2}{N}\Delta_0}+\frac{(\delta+4 \Delta_2)}{2-\delta} \bigg)\bigg\}\\
&\quad+N\frac{2-\delta}{16\Delta_1}\bigg({\frac{2}{N}\Delta_0}+{\frac{4}{N}\Delta_1} \bigg(\frac{64-28\delta}{\delta(2-\delta)}{\frac{2}{N}\Delta_0}+\frac{(\delta+4 \Delta_2)}{2-\delta} \bigg)\bigg)\bigg(\Big(1+\frac{16\Delta_1}{2-\delta}\frac{1}{N}\Big)^N-1\bigg)\\
&= \Big(1+\frac{16\Delta_1}{2-\delta}\frac{1}{N}\Big)^N\max \bigg\{\sigma_{\Mp_-}(X_-),{\frac{2}{N}\Delta_0}+{\frac{4}{N}\Delta_1}\bigg(\frac{64-28\delta}{\delta(2-\delta)}{\frac{2}{N}\Delta_0}+\frac{(\delta+4 \Delta_2)}{2-\delta}\bigg) \bigg\}\\
&\quad+\frac{2-\delta}{16\Delta_1}\bigg({2\Delta_0}+4{\Delta_1} \bigg(\frac{64-28\delta}{\delta(2-\delta)}{\frac{2}{N}\Delta_0}+\frac{(\delta+4 \Delta_2)}{2-\delta} \bigg)\bigg)\bigg(\Big(1+\frac{16\Delta_1}{2-\delta}\frac{1}{N}\Big)^N-1\bigg)\;.
\eea
In the limit $N\to\infty$
\bea
\sigma_{\Mp_+}(X_+)&\leq e^{\frac{16\Delta_1}{2-\delta}}\max \big\{\sigma_{\Mp_-}(X_-),0\big\}+\frac{2-\delta}{8\Delta_1}\bigg({\Delta_0}+2{\Delta_1}\frac{(\delta+4 \Delta_2)}{2-\delta}\bigg)\bigg(e^{\frac{16\Delta_1}{2-\delta}}-1\bigg)\\
&= e^{\frac{16\Delta_1}{2-\delta}}\max \big\{\sigma_{\Mp_-}(X_-),0\big\}+\frac18\Big(\frac{(2-\delta)\Delta_0}{\Delta_1}+2(\delta+4 \Delta_2)\Big)\Big(e^{\frac{16\Delta_1}{2-\delta}}-1\Big)\;.
\eea
\end{proof}

\begin{Def}\label{def:norm_on_adapted_Moser_pairs}
We define the norm of an adapted Moser pair $\Mp=(F_f,H)\in\MP(\Sigma)$ by
\beq
||\Mp||:=||f||_{C^0(\Sigma)}+\int_0^1||H(\cdot,t)||_{C^0(M)}dt+\kappa(H)\;.
\eeq
We denote by
\beq
\mathcal{D}(\overline{\Mp}):=\{\Mp'=(F',H')\mid ||\overline{\Mp}-\Mp'||<1\}
\eeq
the open 1-ball around $\overline{\Mp}$ in $\MP(\Sigma)$.
\end{Def}
Estimating $\Delta_0,\Delta_1\leq||\Mp_+-\Mp_-||$ and $\Delta_2\leq \max\{||\Mp_+||,||\Mp_-||\} $ and using of the monotonicity of $x\mapsto\frac{e^x-1}{x}$ for $x\geq0$ we immediately obtain from Theorem \ref{thm:main_action_estimate} the following corollary.
\begin{Cor}\label{cor:spectral_inequality_implied_by_main_thm}
Under the assumptions of Theorem \ref{thm:main_action_estimate} we have
\beq
\sigma_{\Mp_+}(X_+)\leq e^{\frac{16||\Mp_+-\Mp_-||}{2-\delta}}\max \big\{\sigma_{\Mp_-}(X_-),0\big\}+\frac18\Big(2+\delta+8\max\{||\Mp_+||,||\Mp_-||\}\Big)\Big(e^{\frac{16||\Mp_+-\Mp_-||}{2-\delta}}-1\Big)
\eeq 
\end{Cor}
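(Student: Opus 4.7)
The plan is to replace the quantities $\Delta_0,\Delta_1,\Delta_2$ appearing in Theorem \ref{thm:main_action_estimate} by the norm bounds announced in the hint preceding the corollary. Since the norm of Definition \ref{def:norm_on_adapted_Moser_pairs} is a sum of three nonnegative terms and $\kappa$ is a semi-norm, the inequalities
\[
\Delta_0\leq ||\Mp_+-\Mp_-||,\qquad \Delta_1\leq ||\Mp_+-\Mp_-||,\qquad \Delta_2\leq \max\{||\Mp_+||,||\Mp_-||\}
\]
are all immediate.

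For the exponential factor multiplying $\max\{\sigma_{\Mp_-}(X_-),0\}$, monotonicity of the exponential combined with $\Delta_1\leq||\Mp_+-\Mp_-||$ and the fact that the maximum is nonnegative already produces the desired replacement. The subtle point is the additive term, which I would split as
\[
\frac{(2-\delta)\Delta_0}{8\Delta_1}\bigl(e^{16\Delta_1/(2-\delta)}-1\bigr)+\frac{\delta+4\Delta_2}{4}\bigl(e^{16\Delta_1/(2-\delta)}-1\bigr).
\]
Setting $x:=16\Delta_1/(2-\delta)$ and $x':=16||\Mp_+-\Mp_-||/(2-\delta)$, the first summand equals $2\Delta_0\cdot(e^x-1)/x$. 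The function $y\mapsto(e^y-1)/y$ is nondecreasing on $[0,\infty)$, as is clear from its power series $\sum_{k\geq 0}y^k/(k+1)!$, so it can only grow when $x$ is replaced by $x'$. After this substitution the bound $\Delta_0\leq||\Mp_+-\Mp_-||$ cancels the $||\Mp_+-\Mp_-||$ left in the denominator, yielding the estimate $\tfrac{2-\delta}{8}(e^{x'}-1)$. The second summand is handled by the crude estimates $\Delta_2\leq\max\{||\Mp_+||,||\Mp_-||\}$ together with monotonicity of $y\mapsto e^y-1$, giving the bound $\tfrac{1}{8}\bigl(2\delta+8\max\{||\Mp_+||,||\Mp_-||\}\bigr)(e^{x'}-1)$.

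Adding the two bounds combines the coefficients as $(2-\delta)+2\delta+8\max\{||\Mp_+||,||\Mp_-||\}=2+\delta+8\max\{||\Mp_+||,||\Mp_-||\}$, which together with the enlarged exponential factor in front of $\max\{\sigma_{\Mp_-}(X_-),0\}$ yields precisely the inequality asserted in the corollary. No step presents a real obstacle; the only nontrivial observation is that the apparently singular factor $\Delta_0/\Delta_1$ is neutralised because it is always paired with $e^{16\Delta_1/(2-\delta)}-1$, whose linear-in-$\Delta_1$ leading behaviour is exactly what the quotient $(e^x-1)/x$ extracts, and this is the sole role played by the monotonicity of that function.
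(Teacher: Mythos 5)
Your proposal is correct and follows exactly the route the paper intends: the paper's entire justification is the one-sentence hint preceding the corollary (bound $\Delta_0,\Delta_1$ by $||\Mp_+-\Mp_-||$, bound $\Delta_2$ by $\max\{||\Mp_+||,||\Mp_-||\}$, and use monotonicity of $x\mapsto(e^x-1)/x$), and you have simply written out that computation in full, including the correct observation that the factor $\Delta_0/\Delta_1$ is harmless because it always appears against $e^{16\Delta_1/(2-\delta)}-1$. Nothing is missing.
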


\begin{Def}
For a weakly regular Moser pair $\Mp\in\MP^{reg}(\Sigma)$  and $X\neq0\in\RFH_*$ we set
\beq
\sigma_\Mp(X):=\sigma\Big(\zeta^{\Mp}(X)\Big)
\eeq
where $\RFH_*$ and $\zeta^\Mp$ are defined in Definition \ref{def:zeta_RFH}.
Moreover, for fixed $X\neq0\in\RFH_*$ we define
\bea
\rho_X:\MP^{reg}(\Sigma)&\pf\R\\
\Mp&\mapsto \sigma_\Mp(X)\;.
\eea
\end{Def}

\begin{Conv}
From now on we fix a weakly regular Moser pair $\Mp_0=(F_{f_0},H_0)\in\MP^{reg}(\Sigma)$.
\end{Conv}

\begin{Lemma}\label{lemma:local_lipschitz_continuity}
For a Moser pair $\overline{\Mp} =(\overline{F}, \overline{H})$ we define
\beq
\B(\overline{\Mp}):=\bigg\{X\in\RFH_*\mid \sigma_{\Mp_0}(X)>\frac18\Big(2+\delta+8\max \big\{||\Mp_0||,||\overline{\Mp}||+1 \big\}\Big)\Big(e^{\frac{16(||\Mp_0-\overline{\Mp}||+1)}{2-\delta}}-1\Big)\bigg\}
\eeq
If $X\in\B(\overline{\Mp})$ then the map 
\bea
\rho_X:\MP^{reg}\pf\R,\qquad\Mp\mapsto \sigma_\Mp(X)
\eea
is locally Lipschitz continuous around $\overline{\Mp}$ with respect to the norm on $\MP(\Sigma)$ introduced in Definition \ref{def:norm_on_adapted_Moser_pairs}.
\end{Lemma}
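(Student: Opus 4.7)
The plan is to apply Corollary \ref{cor:spectral_inequality_implied_by_main_thm} twice, in both directions, to any two regular Moser pairs $\Mp_\pm$ lying in $\mathcal{D}(\overline{\Mp})$, and to extract a Lipschitz bound on $\rho_X$ after dropping the $\max\{\cdot,0\}$ on the right-hand side. The defining inequality of $\B(\overline{\Mp})$ is engineered precisely to make this dropping legal.

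\textbf{Step 1: positivity of $\sigma_\Mp(X)$ throughout $\mathcal{D}(\overline{\Mp})$.} First I would use the contrapositive of Corollary \ref{cor:spectral_inequality_implied_by_main_thm} to check that $X\in\B(\overline{\Mp})$ forces $\sigma_\Mp(X)>0$ for every $\Mp\in\mathcal{D}(\overline{\Mp})\cap\MP^{reg}(\Sigma)$. Applying the corollary with $\Mp_-:=\Mp$ and $\Mp_+:=\Mp_0$, and bounding $||\Mp||<||\overline{\Mp}||+1$ together with $||\Mp_0-\Mp||<||\Mp_0-\overline{\Mp}||+1$, the hypothesis $\sigma_\Mp(X)\leq 0$ would yield
\[
\sigma_{\Mp_0}(X)\leq\tfrac{1}{8}\bigl(2+\delta+8\max\{||\Mp_0||,||\overline{\Mp}||+1\}\bigr)\bigl(e^{\frac{16(||\Mp_0-\overline{\Mp}||+1)}{2-\delta}}-1\bigr),
\]
contradicting the defining inequality of $\B(\overline{\Mp})$.

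\textbf{Step 2: uniform upper bound.} Applying Corollary \ref{cor:spectral_inequality_implied_by_main_thm} the other way around, with $\Mp_-:=\Mp_0$ and $\Mp_+:=\Mp\in\mathcal{D}(\overline{\Mp})\cap\MP^{reg}(\Sigma)$, gives a uniform upper bound $\sigma_\Mp(X)\leq K$ where $K$ depends only on $\delta$, $||\Mp_0||$, $||\overline{\Mp}||$, $||\Mp_0-\overline{\Mp}||$ and $\sigma_{\Mp_0}(X)$.

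\textbf{Step 3: two-sided Lipschitz estimate.} For any $\Mp_\pm\in\mathcal{D}(\overline{\Mp})\cap\MP^{reg}(\Sigma)$ the positivity from Step 1 allows replacing $\max\{\sigma_{\Mp_-}(X),0\}$ by $\sigma_{\Mp_-}(X)$ in Corollary \ref{cor:spectral_inequality_implied_by_main_thm}, producing
\[
\sigma_{\Mp_+}(X)-\sigma_{\Mp_-}(X)\leq\bigl(e^{\frac{16||\Mp_+-\Mp_-||}{2-\delta}}-1\bigr)\bigl(\sigma_{\Mp_-}(X)+C\bigr),
\]
with $C:=\tfrac{1}{8}(2+\delta+8(||\overline{\Mp}||+1))$ a uniform constant, and the symmetric inequality after swapping $+$ and $-$. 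Since the exponent stays in a bounded interval $[0,a]$ on $\mathcal{D}(\overline{\Mp})$, the elementary bound $e^x-1\leq\tfrac{e^a-1}{a}x$ converts the exponential factor into a linear one, and combining with $\sigma_{\Mp_\pm}(X)+C\leq K+C$ from Step 2 yields
\[
|\sigma_{\Mp_+}(X)-\sigma_{\Mp_-}(X)|\leq L\,||\Mp_+-\Mp_-||
\]
for an explicit $L$, which is the desired local Lipschitz continuity of $\rho_X$ around $\overline{\Mp}$.

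\textbf{Main obstacle.} The hard step is Step 1. The $\max\{\cdot,0\}$ on the right-hand side of Corollary \ref{cor:spectral_inequality_implied_by_main_thm} destroys any direct two-sided control, and without a strict positive lower bound on $\sigma_\Mp(X)$ the symmetric comparison of Step 3 cannot close: if one of $\sigma_{\Mp_\pm}(X)$ is non-positive the $\max$ kicks in on the other side and the two one-sided estimates no longer combine into a Lipschitz bound. The set $\B(\overline{\Mp})$ is quantitatively engineered so that the contrapositive of the corollary enforces $\sigma_\Mp(X)>0$ throughout $\mathcal{D}(\overline{\Mp})$, and this positivity is precisely what turns the one-sided exponential estimate of the corollary into a genuine Lipschitz property of $\rho_X$.
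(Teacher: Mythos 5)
Your proposal is correct and follows essentially the same route as the paper: the contradiction argument via Corollary \ref{cor:spectral_inequality_implied_by_main_thm} (applied with $\Mp_+=\Mp_0$) to force positivity of $\sigma_\Mp(X)$ on $\mathcal{D}(\overline{\Mp})$, the uniform bound $\rho_X(\Mp)\leq C(\Mp_0,X)$ obtained by running the corollary from $\Mp_0$, and the linearization $e^t-1\leq e^a t$ on a bounded interval to convert the two one-sided exponential estimates into a symmetric Lipschitz bound. No gaps.
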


\begin{proof}
We recall that $\mathcal{D}(\overline{\Mp})$ denotes the open 1-ball around $\overline{\Mp}$ in $\MP(\Sigma)$. We assume by contradiction $\sigma_{\Mp'}(X)\leq0$, $\forall \Mp'\in\mathcal{D}(\overline{\Mp})\cap\Mp^{reg}$. Then applying Corollary \ref{cor:spectral_inequality_implied_by_main_thm} to $\sigma_{\Mp_-}(X)\equiv\sigma_{\Mp'}(X)$ and $\sigma_{\Mp_+}(X)\equiv\sigma_{\Mp_0}(X)$ we obtain
\bea
\sigma_{\Mp_0}(X)\leq\frac18\Big(2+\delta+8 \max\big\{||\Mp_0||,||\Mp'||\big\})\Big)\Big(e^{\frac{16 ||\Mp_0-\Mp'||}{2-\delta}}-1\Big)
\eea
From
\beq
||\Mp'||<||\overline{\Mp}||+1,\quad||\Mp_0-\Mp'||<||\Mp_0-\overline{\Mp}||+1
\eeq
we get
\bea
\sigma_{\Mp_0}(X)&\leq\frac18\Big(2+\delta+8 \max\big\{||\Mp_0||,||\Mp'||\big\})\Big)\Big(e^{\frac{16 ||\Mp_0-\Mp'||}{2-\delta}}-1\Big)\\
&\leq\frac18\Big(2+\delta+8 \max\big\{||\Mp_0||,||\overline{\Mp}||+1\big\})\Big)\Big(e^{\frac{16(||\Mp_0-\overline{\Mp}||+1)}{2-\delta}}-1\Big)\\
\eea
This contradicts the assumption that $X\in\B(\overline{\Mp})$. Thus, we conclude 
\beq\label{eqn:inequality3}
\sigma_{\Mp'}(X)\geq0\qquad\forall \Mp'\in\mathcal{D}(\overline{\Mp})\cap\Mp^{reg}\;.
\eeq
We choose $\Mp',\Mp''\in\mathcal{D}(\overline{\Mp})\cap\Mp^{reg}$ and estimate for $X\in\B(\overline{\Mp})$ using again Corollary \ref{cor:spectral_inequality_implied_by_main_thm} and $\Mp',\Mp''\in\mathcal{D}(\overline{\Mp})$ and employing the elementary estimate $e^t\leq 1+e^C\,t$ or $e^t-1\leq e^C\,t$, $\forall t\in[0,C]$:
\bean
\rho_X(\Mp')&\leq e^{\frac{16||\Mp'-\Mp''||}{2-\delta}}\rho_{X}(\Mp'')+\frac18\Big(2+\delta+8 \max\big\{||\Mp'||,||\Mp''||\big\})\Big)\Big(e^{\frac{16 ||\Mp'-\Mp''||}{2-\delta}}-1\Big)\\
&\leq \Big(1+e^{\frac{32}{2-\delta}}\frac{16}{2-\delta}||\Mp'-\Mp''||\Big)\rho_{X}(\Mp'')\\
&\quad+\frac18\Big(10+\delta+8||\overline{\Mp}||)\Big)\Big(e^{\frac{32}{2-\delta}}\frac{16}{2-\delta}||\Mp'-\Mp''||\Big)\\
&=\rho_{X}(\Mp'')+\Big(e^{\frac{32}{2-\delta}}\frac{16}{2-\delta}||\Mp'-\Mp''||\Big)\rho_{X}(\Mp'')\\
&\quad+\frac18\Big(10+\delta+8||\overline{\Mp}||)\Big)\Big(e^{\frac{32}{2-\delta}}\frac{16}{2-\delta}||\Mp'-\Mp''||\Big)\;.
\eea
Using again Corollary \ref{cor:spectral_inequality_implied_by_main_thm} we estimate
\bea
\rho_X(\Mp'')&\leq e^{\frac{16||\Mp''-\Mp_0||}{2-\delta}}\max \big\{\rho_X(\Mp_0),0\big\}\\
&\quad+\frac18\Big(2+\delta+8\max\{||\Mp''||,||\Mp_0||\}\Big)\Big(e^{\frac{16|| \Mp''-\Mp_0 ||}{2-\delta}}-1\Big)\\
&\leq e^{\frac{16(||\overline{\Mp}-\Mp_0||+1)}{2-\delta}}\max \big\{\rho_X(\Mp_0),0\big\}\\
&\quad+\frac18\Big(2+\delta+8\max\{{||\overline{\Mp}||+1},||\Mp_0||\}\Big)\Big(e^{\frac{16(||\overline{\Mp}-\Mp_0||+1)}{2-\delta}}-1\Big)\\
&=:C(\Mp_0,X)\;.
\eea
Combining the last two inequalities we see
\bean
\rho_X(\Mp')&\leq\rho_{X}(\Mp'')+C(\Mp_0,X)e^{\frac{32}{2-\delta}}\frac{16}{2-\delta}||\Mp'-\Mp''||\\
&\quad+\frac18\Big(10+\delta+8||\overline{\Mp}||)\Big)\Big(e^{\frac{32}{2-\delta}}\frac{16}{2-\delta}||\Mp'-\Mp''||\Big)
\eea
and thus
\bea
\rho_X(\Mp'')-\rho_X(\Mp')&\leq C(\Mp_0,X)e^{\frac{32}{2-\delta}}\frac{16}{2-\delta}||\Mp'-\Mp''||\\
&\quad+\frac18\Big(10+\delta+8||\overline{\Mp}||)\Big)\Big(e^{\frac{32}{2-\delta}}\frac{16}{2-\delta}||\Mp'-\Mp''||\Big)\\
&\leq D(\overline{\Mp},\Mp_0,X)||\Mp'-\Mp''||
\eea
where we abbriviate
\beq
D(\overline{\Mp},\Mp_0,X):=C(\Mp_0,X)e^{\frac{32}{2-\delta}}\frac{16}{2-\delta}+\frac18\Big(10+\delta+8||\overline{\Mp}||)\Big)\Big(e^{\frac{32}{2-\delta}}\frac{16}{2-\delta}\Big)\;.
\eeq
By symmetry
\bea
|\rho_X(\Mp'')-\rho_X(\Mp')|&\leq D(\overline{\Mp},\Mp_0,X)||\Mp'-\Mp''||\;.
\eea
This proves the Lemma.
\end{proof}

We recall that we fixed $\Mp_0\in\MP(\Sigma)$. Similarly as in Corollary \ref{cor:spectral_invariants_extend_to_C_0_Morse} we can extend $\rho_X$.
\begin{Cor}\label{cor:existence_of_extension_of_rho}
Let $\overline{\Mp}\in\Mp(\Sigma)$ and $X\in\B(\overline{\Mp})$. Then there exists a Lipschitz continuous function
\beq
\overline{\rho}_X:\mathcal{D}(\overline{\Mp})\pf(0,\infty)
\eeq
satisfying
\beq
\overline{\rho}_X(\Mp')={\rho}_X(\Mp')\qquad\forall\Mp'\in \mathcal{D}(\overline{\Mp})\cap\MP^{reg}(\Sigma)\;.
\eeq
Moreover, is a spectral value
\beq
\overline{\rho}_X(\Mp)\in\S(\A^\Mp)\;.
\eeq
Finally, we have
\beq
\rho_X(\Mp_0)\leq e^{\frac{16||\Mp_0-\overline{\Mp}||}{2-\delta}}\overline{\rho}_X(\overline{\Mp})+\frac18\Big(2+\delta+8\max\{||\Mp_0||,||\overline{\Mp}||\}\Big)\Big(e^{\frac{16||\Mp_0-\overline{\Mp}||}{2-\delta}}-1\Big)\;.
\eeq
\end{Cor}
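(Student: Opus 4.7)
The plan is to mimic the finite-dimensional construction carried out in Corollary \ref{cor:spectral_invariants_extend_to_C_0_Morse}, extending $\rho_X$ from $\mathcal{D}(\overline{\Mp})\cap\MP^{reg}(\Sigma)$ to $\mathcal{D}(\overline{\Mp})$ by density and the local Lipschitz bound. Proposition \ref{prop:generic_is_regular} guarantees that $\MP^{reg}(\Sigma)$ is dense in $\MP(\Sigma)$ with respect to the norm from Definition \ref{def:norm_on_adapted_Moser_pairs}. For an arbitrary $\Mp\in\mathcal{D}(\overline{\Mp})$ I pick a sequence $\Mp_n\in\mathcal{D}(\overline{\Mp})\cap\MP^{reg}(\Sigma)$ with $\Mp_n\to\Mp$. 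Lemma \ref{lemma:local_lipschitz_continuity} gives a Lipschitz constant $D(\overline{\Mp},\Mp_0,X)$ uniform over the ball, so $(\rho_X(\Mp_n))$ is Cauchy in $\R$ and its limit is independent of the approximating sequence; define $\overline{\rho}_X(\Mp)$ as this limit. The Lipschitz inequality persists under the limit, so $\overline{\rho}_X$ is Lipschitz on $\mathcal{D}(\overline{\Mp})$.

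To see that $\overline{\rho}_X$ takes strictly positive values, I apply Corollary \ref{cor:spectral_inequality_implied_by_main_thm} with $\Mp_+:=\Mp_0$ and $\Mp_-:=\Mp_n\in\mathcal{D}(\overline{\Mp})\cap\MP^{reg}(\Sigma)$, which yields
\beq
\sigma_{\Mp_0}(X)\le e^{\frac{16||\Mp_0-\Mp_n||}{2-\delta}}\max\{\rho_X(\Mp_n),0\}+\tfrac18\bigl(2+\delta+8\max\{||\Mp_0||,||\Mp_n||\}\bigr)\bigl(e^{\frac{16||\Mp_0-\Mp_n||}{2-\delta}}-1\bigr).
\eeq
Using the bounds $||\Mp_n||\le||\overline{\Mp}||+1$ and $||\Mp_0-\Mp_n||\le||\Mp_0-\overline{\Mp}||+1$, the choice $\max=0$ would contradict the defining inequality of $\B(\overline{\Mp})$; hence $\rho_X(\Mp_n)>0$ with a uniform positive lower bound, giving $\overline{\rho}_X(\Mp)>0$ in the limit.

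For the spectral-value property I copy the compactness argument from Lemma \ref{lemma:finite_spectral_value_implies_critical_value}. For each regular $\Mp_n\to\Mp$ choose $c_n=(u_n,\eta_n)\in\Crit(\A^{\Mp_n})$ realising $\A^{\Mp_n}(c_n)=\rho_X(\Mp_n)$. Since $\rho_X(\Mp_n)\to\overline{\rho}_X(\Mp)$ is bounded and $||\nabla\A^{\Mp_n}(c_n)||=0<\delta/4$, Lemma \ref{lem:crucial_lemma_for_eta_bound} yields a uniform bound on $|\eta_n|$, and — crucially — the constants in that bound do not depend on the defining function $f_n$ of $\Mp_n$. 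The critical point equation \eqref{eqn:critical_points_eqn} then provides uniform $C^1$ control on $u_n$, so Arzel\`a--Ascoli extracts a subsequential limit $c=(u,\eta)\in\Crit(\A^{\Mp})$ with $\A^{\Mp}(c)=\overline{\rho}_X(\Mp)$, giving $\overline{\rho}_X(\Mp)\in\S(\A^\Mp)$. Finally, the closing estimate follows by applying Corollary \ref{cor:spectral_inequality_implied_by_main_thm} with $\Mp_-=\Mp_n\to\overline{\Mp}$ and $\Mp_+=\Mp_0$, passing to the limit $n\to\infty$, and dropping the $\max\{\cdot,0\}$ thanks to $\overline{\rho}_X(\overline{\Mp})>0$.

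The main obstacle is the passage to a non-regular limiting Moser pair $\overline{\Mp}$, where the critical set of $\A^{\overline{\Mp}}$ may be degenerate and the Floer chain complex need not be defined directly; the argument succeeds because the Lipschitz estimate of Lemma \ref{lemma:local_lipschitz_continuity} depends only on norms and uses the enhanced period-action inequality Lemma \ref{lem:crucial_lemma_for_eta_bound} in a form whose constants are insensitive to $f$. This is exactly what allows the $C^0$-type extension argument, developed in Section \ref{sec:warmup} for Morse homology, to carry through in the semi-infinite Rabinowitz setting.
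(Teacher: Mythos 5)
Your proposal is correct and follows essentially the same route as the paper: extension by density of $\MP^{reg}(\Sigma)$ plus the uniform Lipschitz constant from Lemma \ref{lemma:local_lipschitz_continuity}, the Arzel\`a--Ascoli compactness argument of Lemma \ref{lemma:finite_spectral_value_implies_critical_value} combined with the $f$-independent bound of Lemma \ref{lem:crucial_lemma_for_eta_bound} for the spectral-value property, and Corollary \ref{cor:spectral_inequality_implied_by_main_thm} in the limit for the final estimate. Your positivity argument (extracting a uniform positive lower bound from the defining inequality of $\B(\overline{\Mp})$) is in fact slightly more explicit than the paper's appeal to \eqref{eqn:inequality3} and continuity, but it is the same underlying idea.
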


\begin{proof}
That $\rho_X$ has an extension as a Lipschitz continuous function follows immediately from Lemma \ref{lemma:local_lipschitz_continuity} and the fact that $\Mp^{reg}(\Sigma)$ is dense in $\Mp(\Sigma)$, see Proposition \ref{prop:generic_is_regular}.

To prove that $\overline{\rho}_X(\Mp)$ is a critical value of $\A^\Mp$ we choose a sequence $\Mp_n\in\Mp^{reg}(\Sigma)\cap\mathcal{D}(\overline{\Mp})$ with $\Mp_n\pf\Mp$. Then by Lemma \ref{lemma:finite_spectral_value_implies_critical_value} there exist $w_n=(v_n,\eta_n)\in\Crit\A^{\Mp_n}$ with
\beq
\rho_X(\Mp_n)=\A^{\Mp_n}(w_n)\;.
\eeq
Moreover, by Lemma \ref{lem:crucial_lemma_for_eta_bound} we conclude
\bea
|\eta_n|&\leq C\big(|\A^{\Mp_n}(w_n)|+1\big)\\
&= C\big(|\rho_X(\Mp_n)|+1\big)\\
&\leq C\big(|\overline{\rho}_X(\overline{\Mp})|+D(\overline{\Mp},\Mp_0,X)||\overline{\Mp}-\Mp_n||+1)\\
&\leq C\big(|\overline{\rho}_X(\overline{\Mp})|+D(\overline{\Mp},\Mp_0,X)+1)
\eea
by Lipschitz continuity and definition of $\mathcal{\D}(\overline{\Mp})$. In particular, the sequence $\eta_n$ is uniformly bounded and applying the Theorem of Arzela-Ascoli $w_{n_\nu}\to w^*\in\Crit\A^\Mp$ and 
\beq
\overline{\rho}_X(\Mp)=\A^\Mp(w^*)\;.
\eeq
The last inequality claimed in the statement of the Corollary follows from Corollary \ref{cor:spectral_inequality_implied_by_main_thm} together with the observation that   $\overline{\rho}_X(\Mp)\geq0$. The latter follows from \eqref{eqn:inequality3} by continuity of $\rho_X$.
\end{proof}

\begin{Def}
For an adapted Moser pair $\Mp\in\MP(\Sigma)$ and $X\in\B(\overline{\Mp})$ we define
\beq
\overline{\sigma}_{\overline{\Mp}}(X):=\overline{\rho}_X(\overline{\Mp})\;.
\eeq 
\end{Def}

\begin{Cor}\label{cor:last_cor}
We recall that we fixed a weakly regular $\Mp_0$. If
\beq
\{\sigma_{\Mp_0}(X)\mid X\in\RFH_*\}\subset \R\cup\{-\infty\}
\eeq 
is unbounded from above then
\beq
\{\overline{\sigma}_{\overline{\Mp}}(X)\mid X\in\B(\overline{\Mp})\}\subset (0,\infty)
\eeq 
is also unbounded from above for all $\overline{\Mp}\in\MP(\Sigma)$.
\end{Cor}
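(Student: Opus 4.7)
The plan is to deduce the statement directly from the one-sided inequality at the end of Corollary \ref{cor:existence_of_extension_of_rho}, reading it the other way around. First I would unpack the definition of $\B(\overline{\Mp})$. Setting
\beq
T(\overline{\Mp}):=\frac18\Big(2+\delta+8\max\{\|\Mp_0\|,\|\overline{\Mp}\|+1\}\Big)\Big(e^{\frac{16(\|\Mp_0-\overline{\Mp}\|+1)}{2-\delta}}-1\Big),
\eeq
we have $\B(\overline{\Mp})=\{X\in\RFH_*\mid\sigma_{\Mp_0}(X)>T(\overline{\Mp})\}$, and crucially $T(\overline{\Mp})$ depends only on the two fixed Moser pairs $\Mp_0$ and $\overline{\Mp}$, not on $X$. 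Using the hypothesis that $\{\sigma_{\Mp_0}(X)\mid X\in\RFH_*\}$ is unbounded from above, I choose a sequence $X_n\in\RFH_*$ with $\sigma_{\Mp_0}(X_n)\to\infty$; then $X_n\in\B(\overline{\Mp})$ for all sufficiently large $n$.

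Next I apply the last inequality of Corollary \ref{cor:existence_of_extension_of_rho} to each such $X_n$. Abbreviating
\beq
A:=e^{\frac{16\|\Mp_0-\overline{\Mp}\|}{2-\delta}},\qquad B:=\frac18\Big(2+\delta+8\max\{\|\Mp_0\|,\|\overline{\Mp}\|\}\Big)\Big(e^{\frac{16\|\Mp_0-\overline{\Mp}\|}{2-\delta}}-1\Big),
\eeq
which are again constants depending only on $\Mp_0$ and $\overline{\Mp}$, the corollary gives
\beq
\sigma_{\Mp_0}(X_n)=\rho_{X_n}(\Mp_0)\leq A\,\overline{\sigma}_{\overline{\Mp}}(X_n)+B,
\eeq
so that
\beq
\overline{\sigma}_{\overline{\Mp}}(X_n)\geq\frac{\sigma_{\Mp_0}(X_n)-B}{A}\longrightarrow\infty.
\eeq
This shows that $\{\overline{\sigma}_{\overline{\Mp}}(X)\mid X\in\B(\overline{\Mp})\}$ is unbounded from above. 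The positivity statement $\overline{\sigma}_{\overline{\Mp}}(X)\in(0,\infty)$ for $X\in\B(\overline{\Mp})$ is not part of what I need to verify here, since Corollary \ref{cor:existence_of_extension_of_rho} already produces $\overline{\rho}_X$ as a function into $(0,\infty)$.

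There is no genuine obstacle: the entire work has been absorbed into the earlier results. The main technical content, namely the exponential-type continuity estimate of Theorem \ref{thm:main_action_estimate} together with its extension to degenerate Moser pairs via Lemma \ref{lemma:local_lipschitz_continuity} and Corollary \ref{cor:existence_of_extension_of_rho}, is what makes the argument essentially a one-line rearrangement. The only point worth noting is that $A$ and $B$ must be independent of $X$ for this to work, which is why it is important that $T(\overline{\Mp})$, $A$, and $B$ are all extracted purely from the norms $\|\Mp_0\|$, $\|\overline{\Mp}\|$, and $\|\Mp_0-\overline{\Mp}\|$.
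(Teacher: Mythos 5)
Your proof is correct and follows the same route as the paper: unboundedness of $\sigma_{\Mp_0}$ together with the fixed, $X$-independent threshold in the definition of $\B(\overline{\Mp})$ produces classes in $\B(\overline{\Mp})$ with arbitrarily large $\sigma_{\Mp_0}(X)$, and rearranging the final inequality of Corollary \ref{cor:existence_of_extension_of_rho} then forces $\overline{\sigma}_{\overline{\Mp}}(X)\to\infty$. Your write-up simply makes explicit the constants $T(\overline{\Mp})$, $A$, $B$ that the paper leaves implicit.
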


\begin{proof}
The assumption that the spectral values are unbounded together with the definition of $\B(\overline{\Mp})$, see Lemma \ref{lemma:local_lipschitz_continuity}, implies that also the set
\beq
\{\sigma_{\Mp_0}(X)\mid X\in\B(\overline{\Mp})\}\subset (0,\infty)
\eeq 
is unbounded from above. Combining this with the estimate in Corollary \ref{cor:existence_of_extension_of_rho} implies the assertion.
\end{proof}

\section{Proof of Theorem \ref{thm:main_intro}}

We recall that in Theorem \ref{thm:main_intro} we assume that $(M=T^*B,\om)$ where $B$ is a closed manifold and $S\subset M$ is fiber-wise star-shaped hypersurface. We fix a a bumpy metric $g$ in the sense of Abraham \cite{Abraham_Bumpy_metrics} and set
\beq
\Sigma:=\{(q,p)\in T^*B\mid ||p||_g^2=1\}\;.
\eeq
According to the Theorem of Abraham \cite{Abraham_Bumpy_metrics} bumpy metrics exist (and are even dense). Since $g$ is bumpy the Moser pair
\beq
\Mp_0:=(F_{f_0},0)\in\Mp(\Sigma)
\eeq
is weakly-regular if we choose $f_0=0$. A hypersurface in $T^*B$ is fiber-wise star-shaped if and only if it is of the form $\Sigma_f$ for some $f:\Sigma\pf\R$. In particular, there exists a function $f_S:\Sigma\pf\R$ with
\beq
S=\Sigma_{f_S}\;.
\eeq

\begin{Prop}\label{prop:pos_CZ_implies_pos_spectral_value}
With the above notation we have
\beq
\CZ(X)\geq0\quad\Longrightarrow\quad\sigma_{\Mp_0}(X)\geq0\qquad\forall X\in\RFH_*({\Mp_0})\;.
\eeq
\end{Prop}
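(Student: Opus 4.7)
The plan is to analyse the critical set of $\A^{\Mp_0}$ directly for the reference Moser pair $\Mp_0=(F_{f_0},0)$ with $f_0=0$, and to match the sign of the action on each generator with the sign of its $\CZ$-grading. This reduces the statement to a bookkeeping exercise on $\RFC_*(\Mp_0)$.

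First I would describe $\Crit(\A^{\Mp_0})$. With $H=0$ and $f_0=0$, the critical point equation \eqref{eqn:critical_points_eqn} becomes $\p_tv=\eta\,\rho(t)\,X_{G_0}(v)$ together with $\int_0^1 F_0(v,t)\,dt=0$. By Lemma \ref{lem:basic_lemma_in_contact_geometry} the restriction $X_{G_0}|_{\Sigma}$ is the Reeb vector field of $(\Sigma,\alpha)$, and the second equation forces $v$ to be trapped on $\Sigma$. Hence $\Crit(\A^{\Mp_0})$ consists of the Morse--Bott manifold $\Sigma\times\{0\}$ of constants, together with a disjoint union of circles of reparametrised closed Reeb orbits on $\Sigma$ whose Lagrange multiplier $\eta\in\R\setminus\{0\}$ is the signed period; bumpiness of $g$ makes these critical circles non-degenerate in the Morse--Bott sense, so Remark \ref{rmk:perturb_by_Morse_fctn} applies.

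Next I would compute the action. For any critical point $(v,\eta)$ the constraint kills the $F$-term, while Lemma \ref{lem:basic_lemma_in_contact_geometry} yields
\beq
\int_0^1 v^*\lambda=\int_0^1\lambda\big(\eta\,\rho(t)\,X_{G_0}(v)\big)\,dt=\eta\int_0^1\rho(t)\,dt=\eta,
\eeq
so $\A^{\Mp_0}(v,\eta)=-\eta$. In particular constants have action $0$, Reeb orbits with $\eta>0$ (forward traversals) have strictly negative action, and Reeb orbits with $\eta<0$ (backward traversals) have strictly positive action.

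Then I would invoke the standard grading computation for Rabinowitz Floer homology of a unit cotangent bundle (as in the references \cite{Cieliebak_Frauenfelder_Oancea_Rabinowitz_Floer_homology_and_symplectic_homology,Abbondandolo_Schwarz_Estimates_and_computations_in_Rabinowitz_Floer_homology}) to check that in the chosen conventions the transverse Conley--Zehnder index of a reparametrised Reeb orbit, together with the Morse index along the critical circle and the shift specific to the Rabinowitz setting, satisfies $\CZ(c)\geq 0\,\Longrightarrow\,\eta\leq 0$. The constants on $\Sigma$ automatically have action $0\geq 0$. Combining the two cases, every generator $c\in\Crit(\A^{\Mp_0})$ with $\CZ(c)\geq 0$ satisfies $\A^{\Mp_0}(c)=-\eta\geq 0$.

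Conclusion. Given $X\in\RFH_*(\Mp_0)$ with $\CZ(X)\geq 0$ and any representative $\xi=\sum_c\xi_c\,c\in\RFC_{\CZ(X)}(\Mp_0)$, all generators appearing with $\xi_c\neq 0$ have $\CZ(c)=\CZ(X)\geq 0$ and hence $\A^{\Mp_0}(c)\geq 0$. Therefore $\A^{\Mp_0}(\xi)=\max\{\A^{\Mp_0}(c)\mid\xi_c\neq 0\}\geq 0$, and taking the infimum over representatives yields $\sigma_{\Mp_0}(X)\geq 0$ as desired.

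The hard part is step three, namely verifying that non-negativity of the $\CZ$-grading forces $\eta\leq 0$. This is an index computation in the Morse--Bott setting that has to keep track of (i) the transverse Conley--Zehnder index of the linearised Reeb flow on the contact distribution, (ii) the Morse index of the chosen auxiliary function on the circles of reparametrisations and on $\Sigma$, and (iii) the sign convention for the Rabinowitz functional tying the direction of the Reeb orbit to the sign of the Lagrange multiplier. Once this sign matching is established, the rest of the proof is purely formal manipulation of the spectral filtration in Definition \ref{def:spectral_invariants}.
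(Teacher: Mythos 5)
Your proposal is correct and follows essentially the same route as the paper: identify the critical points, compute $\A^{\Mp_0}(v,\eta)=-\eta$, and match the sign of the grading with the sign of $\eta$. The step you flag as "the hard part" is handled in the paper simply by recalling that for this Moser pair the critical points with positive/negative $\eta$ are positively/negatively parametrized geodesics of the bumpy metric $g$ and that the Conley--Zehnder index coincides with the negative of the (nonnegative) Morse index of the geodesic, so $\CZ(c)\geq0$ forces $\eta\leq0$.
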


\begin{proof}
First we recall that critical points $(u,\eta)$ of $\A^{\Mp_0}$ with positive/negative $\eta$ are positively/negatively parametrized geodesics for $g$ and that the Conley-Zehnder index coincides with the negative of the Morse index. In particular, positive Conley-Zehnder index implies negatively parametrised geodesics. Let 
\beq
\xi=\sum_{c\colon\CZ(c)=k}\xi_c\,c\in\RFC_{\geq0}(\Mp_0)
\eeq
then since ${\Mp_0}=(F_{f_0},0)$ the action value $\A^{\Mp_0}(u,\eta)=-\eta$ is the negative of the period of the geodesic. In particular, $\A^{\Mp_0}(c)\geq0$ if $\CZ(c)\geq0$.
\end{proof}

\begin{Lemma}\label{lemma:bounded_spectral_value_implies_finite_number}
Under the same assumptions as in Theorem \ref{thm:main_intro} for each $\kappa>0$ the set
\beq
\mathscr{R}_\kappa:=\{X\in\RFH_{\geq0}\mid0\leq\sigma_{\Mp_0}(X)\leq \kappa\}
\eeq
is finite.
\end{Lemma}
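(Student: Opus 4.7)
The plan is to realize $\mathscr{R}_\kappa$ as a subset of the image of a finite-dimensional filtered Rabinowitz Floer homology. First I would classify the critical points of $\A^{\Mp_0}$ relevant to $\RFH_{\geq 0}$: since $\Mp_0=(F_{f_0},0)$ with $f_0=0$, these come in two types: constant loops $(x,0)$ with $x\in\Sigma$, all of action $0$, forming a critical manifold diffeomorphic to $\Sigma$; and pairs $(u,\eta)$ where $u$ is a closed Reeb orbit on $\Sigma$ (equivalently a closed $g$-geodesic on $B$) and $\eta$ its signed period, with $\A^{\Mp_0}(u,\eta)=-\eta$. As already invoked in the proof of Proposition \ref{prop:pos_CZ_implies_pos_spectral_value}, the condition $\CZ\geq 0$ picks out the negatively parametrized geodesics, i.e.\ $\eta\leq 0$, so on the $\CZ\geq 0$ part the action is exactly the length of the underlying geodesic.

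Next I would exploit bumpiness of $g$. The bumpy metric condition guarantees that closed geodesics (and all their iterates) are non-degenerate in the Morse--Bott sense, and in particular that only finitely many closed geodesics have length $\leq\kappa$. Each of them contributes a critical circle to $\Crit\A^{\Mp_0}$; after the auxiliary Morse-function perturbation of Remark \ref{rmk:perturb_by_Morse_fctn} (a Morse function on $\Sigma$ for the constants and a Morse function on each geodesic's $S^1$-orbit) one obtains only finitely many generators in the action window $[0,\kappa]$ with $\CZ\geq 0$. Consequently the filtered subcomplex
\begin{equation}\nonumber
\RFC_{\geq 0}^{\leq\kappa}(\Mp_0):=\bigoplus_{\substack{c\in\Crit\A^{\Mp_0}\\\CZ(c)\geq 0,\;\A^{\Mp_0}(c)\leq\kappa}}\Z/2\cdot c
\end{equation}
is finite-dimensional over $\Z/2$, hence so is its homology, and the inclusion induces a $\Z/2$-linear map $i:H_*\big(\RFC_{\geq 0}^{\leq\kappa}(\Mp_0)\big)\to\RFH_{\geq 0}(\Mp_0)\cong\RFH_{\geq 0}$.

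Finally, for $X\in\mathscr{R}_\kappa$ the spectral value $c:=\sigma_{\Mp_0}(X)\in[0,\kappa]$ is a critical value by Lemma \ref{lemma:finite_spectral_value_implies_critical_value}. Since the action spectrum of $\A^{\Mp_0}$ in the strip $\{\CZ\geq 0,\ \A^{\Mp_0}\in[0,\kappa]\}$ consists of only finitely many values (by the previous paragraph), it is discrete, so any sequence of representatives $\xi_n$ with $\A^{\Mp_0}(\xi_n)\to c$ must eventually satisfy $\A^{\Mp_0}(\xi_n)=c$, giving a representative of $X$ supported in $\RFC_{\geq 0}^{\leq\kappa}(\Mp_0)$. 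Therefore $X\in\im(i)$, and as $\im(i)$ is a finite $\Z/2$-vector space it contains only finitely many elements; this gives the desired finiteness of $\mathscr{R}_\kappa$.

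I expect the main obstacle to be the careful bookkeeping in the first paragraph: identifying $\Crit\A^{\Mp_0}$ with constants plus closed Reeb/geodesic orbits, relating the action to the geodesic length on the $\CZ\geq 0$ branch, and invoking the Morse--Bott perturbation so that the filtered complex is genuinely finitely generated. Once that is set up, the passage via the discrete spectrum and the image of $i$ is essentially formal.
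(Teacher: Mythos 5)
Your argument is correct and follows essentially the same route as the paper: both reduce $\mathscr{R}_\kappa$ to (sub)sums over the finitely many Morse--Bott generators with $\CZ\geq0$ and action in $[0,\kappa]$, so that $\#\mathscr{R}_\kappa\leq 2^{\#\mathscr{C}_\kappa}$. The only cosmetic difference is that you derive the finiteness of generators from the bumpy-metric classification of closed geodesics of length $\leq\kappa$, whereas the paper cites Arzela--Ascoli together with weak regularity of $\Mp_0$ --- these are the same fact in this setting.
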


\begin{proof}
We fix an auxiliary Morse function $f$ on the critical set $\Crit\A^{\Mp_0}$. Then
\beq
\mathscr{C}_\kappa:=\{c\in\Crit(f)\mid0\leq\A^{\Mp_0}(c)\leq\kappa\}
\eeq
is finite, see Remark \ref{rmk:perturb_by_Morse_fctn} for notation. Indeed, this follows from the theorem of Arzela-Ascoli together with assumption that ${\Mp_0}$ is weakly regular, see also the proof of Lemma \ref{lemma:finite_spectral_value_implies_critical_value}. If $X\in\RFH_{\geq0}({\Mp_0})$ and $\sigma_{\Mp_0}(X)\leq \kappa $ then $X$ is of the form
\beq
X=\sum_{c\in\mathscr{C}_\kappa}\xi_cc
\eeq
with $\xi_c\in\Z/2$, and therefore,
\beq
\#\mathscr{R}_\kappa\leq2^{\#\mathscr{C}_\kappa}
\eeq
is finite.
\end{proof}

\begin{Prop}\label{prop:Liebling_has_arb_large_crtitical_values}
Under the same assumptions as in Theorem \ref{thm:main_intro} the set
\beq
\{\sigma_{\Mp_0}(X)\mid X\in\RFH_{\geq0}\}
\eeq
is unbounded from above.
\end{Prop}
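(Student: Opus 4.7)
The plan is to argue by contradiction, combining the two preceding results with the known computation of Rabinowitz Floer homology for unit cotangent bundles. Suppose, for the sake of contradiction, that there exists $\kappa>0$ with
\[
\sigma_{\Mp_0}(X)\leq\kappa\qquad\forall\,X\neq0\in\RFH_{\geq0}.
\]
By Proposition \ref{prop:pos_CZ_implies_pos_spectral_value}, every nonzero $X\in\RFH_{\geq0}$ satisfies $\sigma_{\Mp_0}(X)\geq0$, so in particular each such $X$ lies in $\mathscr{R}_\kappa$ as defined in Lemma \ref{lemma:bounded_spectral_value_implies_finite_number}. That lemma therefore forces $\RFH_{\geq0}$ to be a finite $\Z/2$-vector space.

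The contradiction comes from the isomorphism between Rabinowitz Floer homology of the unit cotangent bundle and the homology of the free loop space $\L_B$ established in \cite{Cieliebak_Frauenfelder_Oancea_Rabinowitz_Floer_homology_and_symplectic_homology, Abbondandolo_Schwarz_Estimates_and_computations_in_Rabinowitz_Floer_homology}. Under this identification the non-negative-index part $\RFH_{\geq0}$ contains a copy of $\H_*(\L_B)$ in positive degrees, and the hypothesis $\dim\H_*(\L_B)=\infty$ of Theorem \ref{thm:main_intro} thus implies $\dim\RFH_{\geq0}=\infty$, which is incompatible with the finiteness obtained in the previous paragraph. Hence the assumption of boundedness was false and the set $\{\sigma_{\Mp_0}(X)\mid X\in\RFH_{\geq0}\}$ must be unbounded from above.

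For the proof it is important that the choice $\Mp_0=(F_{f_0},0)$ with $f_0=0$ and bumpy metric $g$ makes the picture completely geometric: critical points of $\A^{\Mp_0}$ are closed geodesics on $\Sigma$ parametrized proportionally to arc length, with action $\A^{\Mp_0}(u,\eta)=-\eta$ equal to the negative of the period, so positive Conley-Zehnder index generators have non-negative action. This is exactly the input behind both Proposition \ref{prop:pos_CZ_implies_pos_spectral_value} and the positivity of spectral values used above. The only non-trivial point in this plan is to invoke the infinite-dimensionality of $\RFH_{\geq0}$ from the cited references; everything else is a direct application of the two immediately preceding statements.
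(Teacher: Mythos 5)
Your proof is correct and follows essentially the same route as the paper: assume a uniform upper bound $\kappa$, use Proposition \ref{prop:pos_CZ_implies_pos_spectral_value} to place every nonzero class of $\RFH_{\geq0}$ in $\mathscr{R}_\kappa$, and contradict the finiteness from Lemma \ref{lemma:bounded_spectral_value_implies_finite_number} via the infinite-dimensionality of Rabinowitz Floer homology coming from $\dim\H_*(\L_B)=\infty$. If anything, you are slightly more careful than the paper in noting that it is the non-negative-degree part $\RFH_{\geq0}$ (not just $\RFH_*$) that must be infinite-dimensional for the contradiction to go through.
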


\begin{proof}
Assume by contradiction that there exists $\kappa>0$ such that
\beq
\sigma_{\Mp_0}(X)\leq \kappa
\eeq
for all $X\in\RFH_{\geq0}({\Mp_0})$. From the Proposition \ref{prop:pos_CZ_implies_pos_spectral_value} we also know $0\leq\sigma_{\Mp_0}(X)$. We recall from \cite{Cieliebak_Frauenfelder_Oancea_Rabinowitz_Floer_homology_and_symplectic_homology,Abbondandolo_Schwarz_Estimates_and_computations_in_Rabinowitz_Floer_homology} that the assumption on $\H_*(\L_B)$ implies the same for Rabinowitz Floer homology, that is, 
\beq
\dim\RFH_*({\Mp_0})=\infty\;.
\eeq
Thus, the set
\beq
\mathscr{R}_\kappa =\{X\in\RFH_{\geq0}\mid0\leq\sigma_{\Mp_0}(X)\leq \kappa\}
\eeq
is infinite. This directly contradicts Lemma \ref{lemma:bounded_spectral_value_implies_finite_number}.
\end{proof}

To finish the proof of Theorem \ref{thm:main_intro} we set $\Mp_S=(F_{f_S},\widehat{H})$ where $f_S$ is as above and $\widehat{H}\in\mathcal{H}$ is such that $\phi_{\widehat{H}}^1=\phi_H^1$. We apply Corollary \ref{cor:last_cor} to $\Mp_0$ and conclude that
\beq
\{\overline{\sigma}_{\Mp_S}(X)\mid X\in\B(\Mp_S)\}\subset (0,\infty)
\eeq 
is unbounded from above. Thus, $\A^{\Mp_S}$ has arbitrarily large critical values. At a critical point $(v,\eta)\in\Crit\A^{\Mp_S}$ we compute
\beq
\A^{\Mp_S}(v,\eta)=-\eta-\int\big[\lambda(X_{\widehat{H}}(v(t),t))+\widehat{H}(t,v(t))\big]dt
\eeq
and thus
\beq
\eta\leq-\A^{\Mp_S}(v,\eta)+\kappa(\widehat{H})
\eeq
where $\kappa(\widehat{H})$ is the seminorm defined in Definition \ref{def:constant_kappa(H)}. In particular, there exist critical points of $\A^{\Mp_S}$ with arbitrarily negative $\eta$-value. This proves Theorem \ref{thm:main_intro} for negative $\eta$-values.

Looking at Rabinowitz Floer co-homology the statement for positive $\eta$-values follows.

\appendix
\section{An iteration inequality}

Let $x_n$, $n\geq0$ be a sequence of numbers satisfying
\beq
x_{n+1}\leq \max\{\alpha x_n,0\}+\beta
\eeq
for numbers $\alpha>0$, and $\beta>0$.

\begin{Lemma}\label{lemma:iteration_formula_estimate}
\beq
x_n\leq \alpha^n\max\{x_0,\beta\}+\beta\sum_{j=0}^{n-1}\alpha^j=\alpha^n\max\{x_0,\beta\}+\beta\;\frac{\alpha^n-1}{\alpha-1}
\eeq
\end{Lemma}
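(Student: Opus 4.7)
The plan is to proceed by straightforward induction on $n$, performing a case split on the sign of $x_n$ in the inductive step. The whole statement hinges on the way the $\max\{\alpha x_n,0\}$ truncation interacts with the inductive bound; choosing $\max\{x_0,\beta\}$ (rather than just $x_0$) as the quantity one propagates is exactly the device that makes the induction close cleanly, so the first thing I would do is note that the right-hand side is monotone in this quantity and is always $\geq\beta>0$.

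For the base case $n=0$ we use the convention $\sum_{j=0}^{-1}\alpha^j=0$, so the claim reduces to $x_0\leq\max\{x_0,\beta\}$, which is immediate.

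For the inductive step, suppose the bound holds for some $n\geq 0$ and write $M:=\max\{x_0,\beta\}$, so that by hypothesis $x_n\leq \alpha^n M+\beta\sum_{j=0}^{n-1}\alpha^j$. If $x_n\geq 0$, then (using $\alpha>0$) we have $\max\{\alpha x_n,0\}=\alpha x_n$, and the recursion together with the inductive hypothesis gives
\begin{equation}
x_{n+1}\leq \alpha x_n+\beta\leq \alpha^{n+1}M+\beta\sum_{j=1}^{n}\alpha^j+\beta=\alpha^{n+1}M+\beta\sum_{j=0}^{n}\alpha^j,
\end{equation}
which is exactly the claim at step $n+1$. If instead $x_n<0$, then $\max\{\alpha x_n,0\}=0$ and the recursion yields only $x_{n+1}\leq\beta$. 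But the right-hand side of the desired bound at step $n+1$ is at least $\beta$, since $\alpha^{n+1}M\geq 0$ and $\beta\sum_{j=0}^{n}\alpha^j\geq\beta\alpha^0=\beta$. So the inequality holds in this case as well.

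I do not expect a real obstacle here; the only mildly subtle point is the choice to carry $\max\{x_0,\beta\}$ rather than $x_0$ through the induction, which is what allows Case~2 (where the truncation in the $\max$ kicks in) to be absorbed without any change in the recursive structure. Finally, the closed form $\sum_{j=0}^{n-1}\alpha^j=\frac{\alpha^n-1}{\alpha-1}$ (valid for $\alpha\neq 1$; for $\alpha=1$ it reduces to $n$, and the statement holds trivially by reading off the sum directly) gives the second equality in the displayed estimate.
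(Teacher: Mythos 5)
Your proof is correct and follows essentially the same route as the paper: induction on $n$ with the base case $x_0\leq\max\{x_0,\beta\}$ and a case split in the inductive step according to the sign of $x_n$, absorbing the truncated case via the observation that the right-hand side is at least $\beta$. The only (harmless) differences are the placement of the boundary case $x_n=0$ and your extra remark about $\alpha=1$, which the paper leaves implicit.
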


\begin{proof}
The proof goes by induction on $n$. For $n=0$ we check
\beq
x_0\leq \max\{x_0,\beta\}=\alpha^0\max\{x_0,\beta\}+\beta\sum_{j=0}^{-1}\alpha^j\;.
\eeq
For the induction step $n\to n+1$ we distinguish two cases. 

Case 1: $x_n\leq0$. Then
\bea
x_{n+1}&\leq\max\{\alpha x_n,0\}+\beta\\[3ex]
&=\beta\\
&\leq \underbrace{\alpha^{n+1}\max\{x_0,\beta\}}_{\geq0}+\beta\cdot\underbrace{\sum_{j=0}^{n}\alpha^j}_{\geq1}\;.
\eea

Case 2: $x_n>0$. Then
\bea
x_{n+1}&\leq\max\{\alpha x_n,0\}+\beta\\
&\leq \alpha x_n+\beta\\
&\leq \alpha \Big(\alpha^n\max\{x_0,\beta\}+\beta\sum_{j=0}^{n-1}\alpha^j\Big)+\beta\\
&=\alpha^{n+1}\max\{x_0,\beta\}+\beta\sum_{j=0}^{n}\alpha^j
\eea
where we used the induction hypothesis in the third inequality. This proves the Lemma.
\end{proof}

%
%
\bibliographystyle{amsalpha}
\bibliography{../../../Bibtex/bibtex_paper_list}
\end{document}